\newcommand*{\mailto}[1]{\href{mailto:#1}{\nolinkurl{#1}}}
\def\theequation{\@arabic\c@equation}
\newcommand{\bbN}{{\mathbb{N}}}
\newcommand{\bbR}{{\mathbb{R}}}
\newcommand{\bbQ}{{\mathbb{Q}}}
\newcommand{\bbZ}{{\mathbb{Z}}}
\newcommand{\bbC}{{\mathbb{C}}}
\newcommand{\cC}{{\mathcal C}}
\newcommand{\no}{\nonumber}
\newcommand{\lb}{\label}
\newcommand{\bi}{\bibitem}
\newcommand{\f}{\frac}
\newcommand{\ol}{\overline}
\newcommand{\hatt}{\widehat}
\newcommand{\dott}{\,\cdot\,}
\newcommand{\Oh}{O}
\renewcommand{\dot}{\overset{\textbf{\Large.}}}
\renewcommand{\ddot}{\overset{\textbf{\Large..}}}
\newcommand{\dom}{\operatorname{dom}}
\newcommand{\supp}{\operatorname{supp}}
\newcommand{\sgn}{\operatorname{sgn}}
\renewcommand{\Re}{\operatorname{Re}}
\renewcommand{\Im}{\operatorname{Im}}
\renewcommand{\ln}{\operatorname{ln}}
\numberwithin{equation}{section}
\newtheorem{theorem}{Theorem}[section]
\newtheorem{lemma}[theorem]{Lemma}
\newtheorem{corollary}[theorem]{Corollary}
\newtheorem{example}[theorem]{Example}
\newtheorem{conjecture}[theorem]{Conjecture}
\theoremstyle{definition}
\newtheorem{remark}[theorem]{Remark}
\begin{document}
\title[Strongly Singular, Ordinary Differential Operators]{Essential Self-Adjointness of Even-Order, Strongly Singular,  Homogeneous Half-Line Differential Operators}

\author{Fritz Gesztesy}
\address{Department of Mathematics,
Baylor University, Sid Richardson Bldg., 1410 S.\,4th Street,
Waco, TX 76706, USA}
\email{\mailto{Fritz\_Gesztesy@baylor.edu}}
\urladdr{\url{https://math.artsandsciences.baylor.edu/person/fritz-gesztesy-phd}}

\author{Markus Hunziker}
\address{Department of Mathematics,
Baylor University, Sid Richardson Bldg., 1410 S.\,4th Street,
Waco, TX 76706, USA}
\email{\mailto{Markus\_Hunziker@baylor.edu}}
\urladdr{\url{https://math.artsandsciences.baylor.edu/person/markus-hunziker-phd}}


\author[G.\ Teschl]{Gerald Teschl}
\address{Faculty of Mathematics\\ University of Vienna\\
Oskar-Morgenstern-Platz 1\\ 1090 Wien\\ Austria}
\email{\mailto{Gerald.Teschl@univie.ac.at}}
\urladdr{\url{http://www.mat.univie.ac.at/~gerald/}} 

\date{\today}
\subjclass[2010]{Primary: 34B20, 34D15, 34M03; Secondary: 34D10, 34L40.}
\keywords{Homogeneous differential operators, Euler differential operator, strongly singular coefficients, essential self-adjointness.}

\begin{abstract}
We consider essential self-adjointness on the space $C_0^{\infty}((0,\infty))$ of even order, strongly singular, homogeneous differential operators associated with differential expressions of the type 
\[
\tau_{2n}(c) = (-1)^n \frac{d^{2n}}{d x^{2n}} + \frac{c}{x^{2n}}, \quad x > 0, \; n \in \mathbb{N}, \; c \in \mathbb{R}, 
\]
in $L^2((0,\infty);dx)$. While the special case $n=1$ is classical and it is well-known that 
$\tau_2(c)\big|_{C_0^\infty((0,\infty))}$ is essentially self-adjoint if and only if $c \geq 3/4$, the case $n \in \mathbb{N}$, $n \geq 2$, is far from obvious. In particular, it is not at all clear from the outset that 
\begin{align*}
\begin{split} 
& \text{\it there exists } c_n \in \mathbb{R}, \, n \in \mathbb{N}, \text{\it such that}\\
& \quad \tau_{2n}(c)\big|_{C_0^\infty((0,\infty))} \, \text{\it is essentially self-adjoint if and only if } c \geq c_n.
\end{split}\tag{*}\label{0.1}
\end{align*}

As one of the principal results of this paper we indeed establish the existence of $c_n$, satisfying $c_n \geq (4n-1)!!\big/2^{2n}$, such that property \eqref{0.1} holds.

In sharp contrast to the analogous lower semiboundedness question, 
\[
\text{\it for which values of } c \, \text{\it is } \tau_{2n}(c)\big|_{C_0^{\infty}((0,\infty))} \, \text{\it bounded from below?}, 
\]
which permits the sharp (and explicit) answer $c \geq [(2n -1)!!]^{2}\big/2^{2n}$, $n \in \mathbb{N}$, the answer for \eqref{0.1} is surprisingly complex and involves various aspects of the geometry and analytical theory of polynomials. For completeness we record explicitly, 
\[
c_1 = 3/4, \quad c_2= 45, \quad c_3 = 2240 \big(214+7 \sqrt{1009}\,\big)\big/27,    
\]
and remark that $c_n$ is the root of a polynomial of degree $n-1$. We demonstrate that for $n=6,7$, $c_n$ are algebraic numbers not expressible as radicals over $\mathbb{Q}$ (and conjecture this is in fact true for general $n \geq 6$). 
\end{abstract}

\maketitle

{\scriptsize{\tableofcontents}}
\normalsize

\section{Introduction} \lb{s1}
 
Consider the $2n$th-order differential expression
\begin{align}
\tau_{2n}(c) = (-1)^n \f{d^{2n}}{d x^{2n}} + \f{c}{x^{2n}},  \quad 
x \in (0,\infty), \; n \in \bbN, \; c \in \bbR,       \lb{1.1}
\end{align}
and introduce the underlying preminimal and symmetric $L^2((0,\infty); dx)$-realization 
\begin{equation}
\tau_{2n}(c)\big|_{C_0^{\infty}((0,\infty))}     \lb{1.2} 
\end{equation}
and its closure, the associated minimal operator $T_{2n, min}(c)$ in $L^2((0,\infty); dx)$,
\begin{equation}
T_{2n, min}(c) = \ol{\tau_{2n}(c)\big|_{C_0^{\infty}((0,\infty))}}.      \lb{1.3} 
\end{equation}

The principal question to be posed and answered in this paper is the following:
\begin{align} 
& \text{\it For which values of $c \in \bbR$ is $T_{2n, min}(c)$ self-adjoint $($equivalently,}   \no \\
& \quad \text{\it for which values of $c \in \bbR$ is $\tau_{2n}(c)\big|_{C_0^{\infty}((0,\infty))}$ essentially self-adjoint\,$)$}    \lb{1.4} \\
& \quad \text{\it in $L^2((0,\infty); dx)$?}     \no 
\end{align} 

For the notion of (essentially) self-adjoint Hilbert space operators see, for instance, \cite[Sect.~V.3]{Ka80}, \cite[Sect.~VIII.2]{RS80}, \cite[Sect.~3.2]{Sc12}, and \cite[Sects.~4.4, 5.3]{We80}.

In the special case $n=1$ it is well-known that the precise answer is (see, e.g., \cite{Se49}), 
\begin{equation}
\text{\it \eqref{1.4} holds for $n=1$ if and only if } \, c \geq c_1 = 3/4.    \lb{1.5} 
\end{equation}

{\it A priori} it is not clear at all that this extends to $n \in \bbN$, $n \geq 2$, that is, it is not obvious from the outset that 
\begin{align} 
\begin{split} 
& \text{\it there exists $c_n \in \bbR$, $n \in \bbN$, such that}  \lb{1.6} \\
& \quad \tau_{2n}(c)\big|_{C_0^{\infty}((0,\infty))} \, \text{\it is essentially self-adjoint if and only if $c \geq c_n$.}   
\end{split} 
\end{align}  

Our principal new results, Theorem \ref{t4.6} and Corollary \ref{c4.7} assert that \eqref{1.6} indeed holds for some $c _n \in \bbR$ satisfying
\begin{equation} 
c_n \geq (4n-1)!!\big/2^{2n}, \quad n \in \bbN.      \lb{1.7}
\end{equation}

The proof of the existence of $c_n$ in \eqref{1.6} (satisfying \eqref{1.7}) is surprisingly complex and involves various aspects of the geometry and analytical theory of polynomials. Explicitly, one obtains 
\begin{align} 
\begin{split} 
c_{1} &= 3/4, \quad c_{2 }= 45, \quad 
c_{3 } = 2240 \big(214+7 \sqrt{1009}\,\big)\big/27,     \\
c_{4 } &= 2835 \Bigg( 13711+\f{190309441}{\sqrt[3]{2625188010911+1805760  
   \sqrt{-292868607}}}       \lb{1.8} \\
  &\quad +  \sqrt[3]{2625188010911+1805760   \sqrt{-292868607}}\ \Bigg) 
  \end{split} 
\end{align}
and we note that in this context that $c_n$ is the root of a polynomial of degree $n-1$. In addition, we demonstrate that for $n=6,7$, $c_n$ are algebraic numbers not expressible as radicals over $\bbQ$; we conjecture that this actually continues to hold for general $n \geq 6$. 

Before explaining some of the strategy behind the proof of the existence of $c_n$, and for the purpose of comparison and exhibition of a sharp contrast to the essential self-adjointness problem \eqref{1.6}, we briefly record the precise borderline of semiboundedness of the minimal operator $T_{2n, min}(c)$, which permits a remarkably simple and explicit solution as follows:  
\begin{align}
\begin{split} 
& \text{\it $T_{2n, min}(c)$ is bounded from below, and then actually, $T_{2n, min}(c) \geq 0$, $n \in \bbN$,} \\
& \quad \text{\it if and only if } \, c \geq - \f{[(2n -1)!!]^{2}}{2^{2n}}.     \lb{1.9} 
\end{split} 
\end{align}
This is a consequence of the sequence of sharp Birman--Hardy--Rellich inequalities, see Birman \cite[p.~46]{Bi66} (see also Glazman \cite[p.~83--84]{Gl65}) 
\begin{align}
\begin{split} 
\int_0^{\infty} dx \, \big| f^{(n )}(x)\big|^{2} \geq \f{[(2n -1)!!]^{2}}{2^{2n}} 
\int_0^{\infty} dx \, x^{-2n} |f(x)|^2,&   \lb{1.10} \\
f\in C_{0}^{n }((0,\infty)), \; n  \in \bbN.&
\end{split} 
\end{align}
For more details on \eqref{1.10} see \cite{GLMW18} and the extensive literature cited therein. 

Returning to \eqref{1.6}, our subject at hand, we recall that $\tau_{2n}(c)\big|_{C_0^{\infty}((0,\infty))}$ is essentially self-adjoint in $L^2((0,\infty); dx)$ if and only if $\tau_{2n}(c)\big|_{C_0^{\infty}((0,\infty))}$ is in the limit point case at $x=0$ and $x = \infty$. However, since for all $c \in \bbR$, $c x^{-2n}$ is bounded on $(\varepsilon,\infty)$ for all $\varepsilon > 0$, $\tau_{2n}(c)\big|_{C_0^{\infty}((0,\infty))}$ is automatically in the limit point case at $x = \infty$ and hence it suffices to exclusively focus on whether or not $\tau_{2n}(c)\big|_{C_0^{\infty}((0,\infty))}$ is in the limit point case at $x=0$. 

In this context one observes that $\tau_{2n}(c)\big|_{C_0^{\infty}((0,\infty))}$ is said to be in the {\it limit point case} at an interval endpoint $a \in \{0, \infty\}$ if precisely $n$ solutions of 
\begin{equation} 
\tau_{2n}(c) y(\mu,\dott;c) = \mu y(\mu,\dott;c)    \lb{1.11}
\end{equation} 
(i.e., precisely half of the solutions) lie in $L^2(I_a; dx)$, where $I_a$ is an interval of the type $I_0 = (0,d)$ if $a=0$, and $I_{\infty} = (d,\infty)$ if $a=\infty$, for some fixed $d \in (0,\infty)$.

To decide the limit point property of $\tau_{2n}(c)\big|_{C_0^{\infty}((0,\infty))}$ at $x=0$, one next argues that it suffices to choose $\mu = 0$ in \eqref{1.11} which then leads to a special Euler-type equations which generically has solutions of power-type
\begin{equation}
y_j(0,x;c) = C_j x^{\alpha_j(c)}, \quad 1 \leq j \leq 2n,    \lb{1.12}
\end{equation}
with $\alpha_j(c)$, $1 \leq j \leq 2n$, being the solutions of the underlying discriminant or indicial equation, 
\begin{equation}
D_{2n}(z;c) = \prod_{j=1}^{2n} [z - (j-1)] + (-1)^n c = 0, \quad z \in \bbC.    \lb{1.13} 
\end{equation}
In exceptional cases, where some of the $\alpha_k(c)$ coincide, \eqref{1.12} is replaced by 
\begin{equation}
y_k(0,x;c) = C_k x^{\alpha_k(c)} P(\ln(x)),      \lb{1.14}
\end{equation}
where $P(\dott)$ is a polynomial of degree at most $2n-1$. Since we are interested in whether or not 
$y_j(0,x;c) \in L^2((0,d);dx)$ for some $d \in (0,\infty)$, the presence of logarithmic terms is irrelevant and the deciding $L^2$-criterion for solutions of $\tau_{2n}(c) y(\mu,\dott;c) = 0$ simply becomes
\begin{align}
\begin{split} 
& \Re(\alpha_j(c)) > - 1/2, \, \text{ for $L^2$-membership}, \\
& \quad \text{respectively, } \, \Re(\alpha_j(c)) \leq - 1/2, \, \text{ for non-$L^2$-membership.}      \lb{1.15} 
\end{split} 
\end{align}

In conclusion, to settle the essential self-adjointness problem \eqref{1.6} one needs to establish the existence of $c_n \in \bbR$ such that precisely $n$ roots $\alpha_j(c)$ of $D_{2n}(\dott;c)=0$ satisfy 
$\Re(\alpha_j(c)) \leq - 1/2$ for $c \geq c_n$. (Equivalently, precisely $n$ roots $\alpha_k(c)$ of 
$D_{2n}(\dott;c)=0$ satisfy $\Re(\alpha_k(c)) > - 1/2$ for $c \geq c_n$.)

Turning briefly to the content of each section, we note that Section \ref{s2} introduces minimal and maximal operators associated with general differential expressions $\tau_{2n}$ of order $2n$, $n \in \bbN$, in $L^2((0,\infty); dx)$ and reviews the underlying facts on deficiency indices of the minimal operator 
$T_{2n,min}$, including Kodaira's decomposition principle. Section \ref{s3} discusses perturbed Euler differential systems and investigates the underlying deficiency indices for the minimal operator associated with $\tau_{2n}(c)$ in \eqref{1.1}. In addition, some of the basic theory of first-order systems in the complex domain going back to Fuchs, Frobenius, and Sauvage, in versions championed by Hille and Kneser, is summarized. Moreover, the special examples $\tau_2(c)$ and $\tau_4(c)$ are treated explicitly. Properties of the (real part of the) roots $\alpha_j(c)$ of $D_{2n}(\dott;c)=0$ are the center piece of our principal Section \ref{s4}, culminating in Theorem \ref{t4.6} and Corollary \ref{c4.7} which settle the essential self-adjointness problem \eqref{1.6}. The techniques involved are related to the Grace--Haewood theorem \cite[p.~126]{RS02}, the Routh--Hurwitz criterion, and Orlando's formula \cite[\S~XV.7]{Ga59}. Appendix \ref{sA} shows with the help of Galois theory that $c_6, c_7$ are algebraic numbers that cannot be expressed as radicals over $\bbQ$; we conjecture this actually remains the case for all $c_n$, $n \in \bbN$, $n \geq 6$. 

Finally, some remarks on the notation employed: We denote by $\bbC^{M \times N}$, $M, N \in \bbN$, the linear space of $M \times N$ matrices with complex-valued entries. $I_N$ represents the identity matrix in $\bbC^N$. The spectrum of a matrix (or closed operator in a Hilbert space) $T$ is denoted by $\sigma(T)$.  
The abbreviation $\bbN_0= \bbN \cup \{0\}$ is used.

\section{The Deficiency Indices of $T_{2n, min}(c)$} \lb{s2}

In this section we briefly recall the notions of deficiency indices and limit point, respectively, limit circle cases associated with maximally defined differential operators, generated by formally symmetric differential expressions $\tau_{2n}$ on intervals $(a,b) \subseteq \bbR$, of even order $2n$, $n \in \bbN$, and then specialize the results to the particular case 
$\tau_{2n}(c)$ at hand. We will primarily follow \cite[Sects.~XIII.2, XIII.6]{DS88}, \cite[Sects.~17.4, 17.5]{Na68}, \cite[Sects.~3, 4]{We87} and also refer to \cite[\S\,126]{AG81}, \cite{HS83}, \cite{HS84}, \cite{KR74}, \cite{LM03}, \cite[Chs.~2--4]{WZ19} for relevant background material. 

Assuming $(a,b) \subseteq \bbR$ we suppose that 
\begin{align}
& p_m, r \text{ are (Lebesgue) measurable and real-valued~a.e.~on $(a,b)$}, \quad 0 \leq m \leq n,    \no \\
& p_n > 0, \; r > 0 \text{ (Lebesgue) a.e.~on $(a,b)$},     \lb{2.1} \\
& (1/p_n), \; p_m \in L^1_{loc}((a,b); dx), \quad 0 \leq m \leq n-1,    \no 
\end{align}
and introduce the quasi-derivatives
\begin{align}
& u^{[0]} = u, \; u^{[m]} = u^{(m)}, \quad 0 \leq m \leq n-1,     \no \\
& u^{[n]} = p_n \big(u^{{n-1}}\big)',    \no \\
& u^{[n+1]} = - \big(u^{{n}}\big)' + p_{n-1} u^{{n-1}},     \lb{2.2} \\
& u^{[n+j]} = - \big(u^{{n+j-1}}\big)' + p_{n-j} u^{{n-j}}, \quad 2 \leq j \leq n -1,    \no \\
& u^{[2n]} = - \big(u^{{2n - 1}}\big)' + p_0 u = r (\tau_{2n} u).     \no 
\end{align}
Here the formally symmetric differential expression $\tau_{2n}$ of order $2n$ is given by
\begin{equation}
(\tau_{2n} u)(x) = \sum_{m=0}^n (-1)^m \big(p_m(x) y^{(m)}(x)\big)^{(m)}, \quad x \in (a,b).      \lb{2.3}
\end{equation}

Given \eqref{2.1}--\eqref{2.3}, the maximal $L^2((a,b); rdx)$-realization (in short, the maximal operator), $T_{2n,max}$, associated with $\tau_{2n}$ 
is then defined by
\begin{align}
& T_{2n, max} f = \tau_{2n} f,    \no \\
& f \in \dom(T_{2n, max}) = \big\{g \in L^2((a,b);rdx) \, \big| \, g^{[\ell]} \in AC_{loc}((a,b)), \, 0 \leq \ell \leq 2n - 1; \no \\
& \hspace*{8cm} \tau_{2n} g \in L^2((a,b);rdx)\big\}.     \lb{2.4} 
\end{align}
Introducing the preminimal operator 
\begin{align}
\begin{split}
& \dot T_{2n, min} f = \tau_{2n} f,     \\
& f \in \dom\big(T_{2n, min}\big) = \{g \in \dom(T_{2n, max}) \, | \, \supp\,(g) \, \text{compact}\}      \lb{2.5} 
\end{split} 
\end{align}
in $L^2((a,b); rdx)$, one can show that $\dot T_{2n, min}$ is densely defined, symmetric, and closable. Hence, defining the minimal operator  in $L^2((a,b); rdx)$ associated with $\tau_{2n}$ as the closure of $\dot T_{2n, min}$, 
\begin{equation}
T_{2n, min} = \ol{\dot T_{2n, min}}, 
\end{equation}
one can prove the well-known fact 
\begin{equation}
T_{2n, min}^* = T_{2n, max}, \quad T_{2n, max}^* = T_{2n, min},
\end{equation}
and thus $T_{2n, max}$ is closed. Moreover, if 
\begin{equation}
p_m \in C^m((a,b)), \quad 0 \leq m \leq n,     \lb{2.8} 
\end{equation} 
one can introduce
\begin{equation}
\ddot T_{2n, min} = \tau_{2n}\big|_{C_0^{\infty}((a,b))}, 
\end{equation}
and then also obtains
\begin{equation}
\ol{\ddot T_{2n, min}} = \ol{\dot T_{2n, min}} = T_{2n, min}.  
\end{equation}
Introducing the Lagrange bracket
\begin{equation}
[u,v]_x = \sum_{j=1}^n \big[u^{[j-1]}(x) v^{[2n-j]}(x) - u^{[2n-j]}(x) v^{[j-1]}(x)\big], \quad x \in (a,b),
\end{equation}
one infers for $(d,e) \subset (a,b)$ Lagrange's identity via integrations by parts
\begin{equation}
\int_d^e r(x) dx \, \big\{\ol{(\tau_{2n} u)(x)} v(x) - \ol{u(x)} (\tau_{2n} v)(x)\big\} = [\ol{u},v]_e - [\ol{u},v]_d. 
= [u,v]_x\big|_{x=d}^e. 
\end{equation}
Moreover, if $u(\ol \mu, \dott)$ and $v(\mu, \dott)$ are solutions of 
\begin{equation}
(\tau_{2n} u(\ol \mu, \dott))(x) = \ol \mu u(\ol \mu,x), \quad (\tau_{2n} v(\mu, \dott))(x) = \mu v(\mu,x), 
\quad \mu \in \bbC, \; x \in (a,b),
\end{equation}
then
\begin{equation}
\f{d}{dx} [\ol{u(\ol \mu, \dott)}, v(\mu,\dott)]_x = 0, \quad x \in (a,b). 
\end{equation}
Finally, we also recall the known fact,
\begin{align}
\begin{split} 
& \dom(T_{2n, min}) = \{g \in \dom(T_{2n,max}) \, | \,\text{for all $h  \in \dom(T_{2n,max})$:}  \\ 
& \hspace*{6.65cm}       [h,g]_a = 0 = [h,g]_b \, \}. 
\end{split} 
\end{align}

In the following, the number of $L^2((a,b); rdx)$-solutions $u(\mu_{\pm}, \dott)$ of 
\begin{equation} 
\tau_{2n} u(\mu_{\pm}, \dott) = \mu_{\pm} u(\mu_{\pm}, \dott), \, \text{ with $\pm \Im(\mu_{\pm}) > 0$}, 
\end{equation} 
is denoted by $n_{\pm} (T_{2n, min})$ and called the {\it deficiency indices of} $T_{2n, min}$. This notion is well-defined as $n_{\pm} (T_{2n, min})$ is known to be constant throughout the open complex upper and lower half-plane. As a result, one typically chooses $\mu_{\pm} = \pm i$. Since the coefficients of $\tau_{2n}$ are real-valued, one obtains by a result of von Neumann \cite{vN30} that 
\begin{equation}
n_+ (T_{2n, min}) = n_- (T_{2n, min}),      \lb{2.17} 
\end{equation}
and hence,
\begin{equation} 
0 \leq n_{\pm} (T_{2n, min}) \leq 2n. 
\end{equation}

Finally, given $d \in (a,b)$, and denoting by $T_{2n, min (max),(a,d)}$ and $T_{min (2n, max),(d,b)}$ the corresponding minimal or maximal operator with the interval $(a,b)$ replaced by $(a,d)$ and $(d,b)$, respectively, where $d$ is now a regular endpoint for $\tau_{2n}\big|_{(a,d)}$ and $\tau_{2n}\big|_{(d,b)}$, one has (cf.\ \cite[p.~483--484]{AG81})
\begin{align}
& n_+ (T_{2n, min, (a,d)}) = n_- (T_{2n, min, (a,d)}), \quad n_+ (T_{2n, min, (d,b)}) = n_- (T_{2n, min, (d,b)}),   \no \\
& n \leq n_{\pm} (T_{2n, min, (a,d)}) \leq 2n, \quad n \leq n_{\pm} (T_{2n, min, (d,b)}) \leq 2n,      \lb{2.19} 
\end{align}
and the Kodaira decomposition principle (see, e.g., \cite[Corollary~XIII.2.26]{DS88}, \cite[p.~72]{Na68})
\begin{equation}
n_{\pm} (T_{2n, min}) = n_{\pm} (T_{2n, min, (a,d)}) + n_{\pm} (T_{2n, min, (d,b)}) - 2n      \lb{2.20} 
\end{equation} 
holds. 

\begin{remark} \lb{r2.1}
Given the fact that $d \in (a,b)$ is a regular endpoint for $\tau_{2n}|_{(a,d)}$ and $\tau_{2n}|_{(d,b)}$, the particular (and extreme) case where 
\begin{equation}
n_{\pm} (T_{2n, min, (a,d)}) = n \, \text{ (resp., $n_{\pm} (T_{2n, min, (d,b)}) = n$)}
\end{equation}
is the precise analog of Weyl's {\it limit point case} at $x=a$ (resp., $x=b$) in the classical second order case $n=1$, that is, for $\tau_2|_{(a,d)}$ (resp., $\tau_2|_{(d,b)}$). Hence, we will apply this limit point terminology also in the 
$2n$th-order context in the following. In particular, if 
\begin{equation}
n_{\pm} (T_{2n, min, (a,d)}) = n = n_{\pm} (T_{2n, min, (d,b)}),      \lb{2.22} 
\end{equation}
then $\tau_{2n}|_{(a,b)}$ is in the limit point case at $a$ and $b$ and \eqref{2.20} yields accordingly that  
\begin{equation}
n_{\pm} (T_{2n, min}) = 0     \lb{2.23}
\end{equation}
in this case. Thus, \eqref{2.22}, and hence \eqref{2.23}, is equivalent to
\begin{equation}
T_{2n, min} = T_{2n, max} \, \text{ is self-adjoint in $L^2((a,b); rdx)$,}     \lb{2.24} 
\end{equation}
which in turn is equivalent to 
\begin{equation}
\dot T_{2n, min} \, \text{ is essentially self-adjoint in $L^2((a,b); rdx)$.}     \lb{2.25}
\end{equation}
If in addition hypothesis \eqref{2.8} holds, then each of \eqref{2.22}--\eqref{2.25} is also equivalent to 
\begin{equation}
\ddot T_{2n, min} \, \text{ is essentially self-adjoint in $L^2((a,b); rdx)$.}      \lb{2.26} 
\end{equation}

All other cases, where $1 \leq n_{\pm} (T_{2n, min}) \leq 2n$, describe various degrees of limit circle cases of 
$\tau_{2n}$, with $n_{\pm} (T_{2n, min}) = 2n$ representing the extreme case. 
\hfill $\diamond$
\end{remark}

In the bulk of this paper we are particularly interested in the special case where 
\begin{equation} 
p_n(x) =1, \quad p_m(x) = 0, \;1 \leq m \leq n-1, \quad p_0(x) = c x^{- 2n}, \quad r(x) = 1, 
\quad x \in (0,\infty), 
\end{equation} 
that is, in the concrete example 
\begin{equation}
\tau_{2n}(c) = (-1)^n \f{d^{2n}}{d x^{2n}} + \f{c}{x^{2n}}, \quad x \in (0,\infty), \; n \in \bbN, \; c \in \bbR, 
\end{equation}
denoting the associated (pre)minimal and maximal operators in $L^2((0,\infty);dx)$ by $T_{2n, min}(c)$, 
$\dot T_{2n, min}(c)$, $\ddot T_{2n, min}(c)$, $T_{2n,max}(c)$, etc. 

In particular, we are interested in the question, 
\begin{align} 
\begin{split} 
& \text{``for which values of $c \in \bbR$ is $T_{2n, min}(c)$ self-adjoint}   \\
& \quad \text{\big(resp., $\ddot T_{2n, min}(c)$ essentially self-adjoint\big) in $L^2((0,\infty);dx)$?''}
\end{split} 
\end{align}

\section{Perturbed Euler Differential Systems  and Their Deficienciy Indices} \lb{s3}

In this section we will prove that it suffices to focus on the spectral parameter $\mu = 0$ when trying to determine the number of $L^2((0,d);dx)$-solutions $y(\mu,\dott)$ of 
\begin{align}
\begin{split} 
\tau_{2n}(c) y(\mu,x) = (-1)^n y^{(2n)}(\mu,x) + c x^{-2n} y(\mu,x) = \mu y(\mu,x),&    \lb{3.1} \\ 
x \in (0,d), \; \mu \in \bbC, \; n \in \bbN, \; c \in \bbR,&
\end{split} 
\end{align}
for fixed $d \in (0,\infty)$ (e.g., one could simply choose $d=1$). 
In particular, the deficiency indices of the underlying minimal differential operator $T_{2n, min}(c)$ can be determined from the knowledge of the number of $L^2((0,d);dx)$-solutions of $y(0,\dott)$, that is, one can reduce \eqref{3.1} to the far simpler case $\mu = 0$.

To prove the $\mu$-independence of the number of $L^2((0,d);dx)$-solutions $y(\mu,\dott)$ of \eqref{3.1}, we find it convenient to employ a bit of the celebrated theory of regular singular points (singular points of the first kind) for first-order systems of differential equations in the complex domain, going back to G.\ Frobenius \cite{Fr73}, L.\ Fuchs  \cite{Fu65}, \cite{Fu66}, and L.\ Sauvage \cite{Sa86}, \cite{Sa94}. The theory is aptly summarized in a number of treatises, we just mention 
\cite[p.~17--36]{Ba00}, \cite[p.~108--135]{CL85}, \cite[148--164]{Ga59}, \cite[p.~70--92]{Ha82}, \cite[p.~105--131]{He77}, \cite{Hi26}, \cite[p.~182--198]{Hi69}, \cite[p.~342--352]{Hi97}, 
\cite[p.~356--372, Ch.~XVI]{In56}, \cite[Ch.~V]{Po36}, \cite[Ch.~4]{Te12}, and \cite[216--235]{Wa98}.

In the following $\zeta \in \bbC\backslash\{0\}$ (resp., $\zeta \in D(0;R) \backslash\{0\} 
= \{\zeta \in \bbC \,|\, 0 < |\zeta| < R\}$ for some fixed 
$R \in (0,\infty)$) represents the complex analog of $x \in (0,d)$ in \eqref{3.1} and we will study first-order systems of differential equations of the particular form
\begin{equation}
Y'(\zeta) = \zeta^{-1} A(\zeta) Y(\zeta),   \lb{3.2} 
\end{equation} 
where $Y(\dott)$ represents either an $N \times 1$ solution vector or an $N \times N$ solution matrix, $N \in \bbN$, which generally is multi-valued, and $A(\dott)$ is an $N \times N$ entire (resp., analytic in $D(0;R)$) matrix-valued function, 
\begin{equation}
A(\zeta) = \sum_{m \in \bbN_0} A_m \, \zeta^m. \lb{3.3} 
\end{equation}
The very special structure (at most a first-order pole of the coefficient matrix at $z = 0$) of the right-hand side of \eqref{3.2} then leads to a rather special structure of solutions as described in the following. 

As a warm up we briefly discuss the pure Euler situation where $A(\dott)$ is actually a constant matrix 
$A_0 \in \bbC^{N \times N}$, that is, we consider
\begin{equation}
Y'(\zeta) = \zeta^{-1} A_0 Y(\zeta),   \lb{3.4} 
\end{equation}  
with fundamental (typically, many-valued) matrix solutions of the form 
\begin{equation}
Y(\zeta) = \zeta^{A_0} C = e^{A_0 \ln(\zeta)} C,    \lb{3.5}
\end{equation}
where $C \in \bbC^{N \times N}$ is nonsingular (i.e., ${\det}_{\bbC^N}(C) \neq 0$). Transforming $A_0$ into its Jordan normal form $\hatt A_0 = T A_0 T^{-1}$ for some nonsingular $T \in \bbC^{N \times N}$, and setting 
$\hatt Y(\dott) = T Y(\dott)$ yields 
\begin{equation}
\hatt Y'(\zeta) = \zeta^{-1} \hatt A_0 \hatt Y(\zeta),   \lb{3.6} 
\end{equation}  
hence one can assume without loss of generality that $A_0$ is in Jordan normal form. In this case $A_0$ is represented as a block diagonal matrix consisting possibly of a diagonal matrix $D$ and possibly of a number of nontrivial Jordan blocks of varying $r \times r$, $1 \leq r \leq N$, sizes, denoted by $J_r(\alpha_q)$. In particular, if $J_r(\alpha_q)$ is of the form 
\begin{equation}
J_r(\alpha_q) = \begin{pmatrix}
\alpha_q & 1 & 0 & \cdots & 0 \\
0 & \alpha_q & 1 & \cdots & 0 \\
\vdots  & \vdots  & \ddots & \ddots & \vdots  \\
0 & 0 & 0 &\cdots & 1 \\ 
0 & 0& 0 & \cdots & \alpha_q 
\end{pmatrix}, \quad \alpha_q \in \sigma(A_0),       \lb{3.7} 
\end{equation}
then
\begin{equation}
\zeta^{J_r(\alpha_q)} = \zeta^{\alpha_q} \begin{pmatrix}
1& \ln(\zeta) & [\ln(\zeta)]^2/[2!] & \cdots & [\ln(\zeta)]^{r-1}/[(r-1)!] \\
0 & 1 & \ln(\zeta) & \cdots & [\ln(\zeta)]^{r-2}/[(r-2)!]  \\
\vdots  & \vdots  & \ddots & \ddots & \vdots  \\
0 & 0 & 0 &\cdots & \ln(\zeta) \\ 
0 & 0& 0 & \cdots & 1 
\end{pmatrix},       \lb{3.8} 
\end{equation}
explicitly demonstrating the appearance of powers of logarithms of $\zeta$ in \eqref{3.5} in the case where $A_0$ has an eigenvalue $\alpha_q$ whose algebraic multiplicity strictly exceeds its geometric one. In particular, the eigenvalues 
$\alpha_q$ of $A_0$ are determined via the characteristic equation for $A_0$, also called the {\it indicial equation}, 
\begin{equation}
D_N(z) = {\det}_{\bbC^N} (z I_N - A_0) = 0, \quad z \in \bbC.     \lb{3.9} 
\end{equation}

The general, or perturbed, Euler case \eqref{3.2} leads to analogous results as follows.

\begin{theorem}[Hille \cite{Hi69}, p.~192--198, Kneser \cite{Kn50}]\lb{t3.1} ${}$ \\ 
Given the matrix $A(\dott) \in \bbC^{N \times N}$ in \eqref{3.3} entire $($resp., analytic in $D(0;R)$$)$, the perturbed Euler differential system \eqref{3.2} has a fundamental set of $($generally, multi-valued\,$)$ solutions $Y_j \in \bbC^{N \times 1}$, $j = 1,\dots, N$, of the form,
\begin{equation}
Y_j(\zeta; q) = \sum_{m \in \bbN_0} p_{j,m,q}(\ln(\zeta) \, \zeta^{m + \alpha_q}, \quad 1 \leq j \leq N,   
\lb{3.10} 
\end{equation}
where $\alpha_q$ runs through all distinct eigenvalues of $A_0$ $($i.e., all elements of $\sigma(A_0)$$)$, determined via $D_N (\dott) = 0$, and $p_{j,m,q} (\dott) \in \bbC^{N \times 1}$ are polynomials of degree less than or equal to $N-1$.~The series in \eqref{3.10} converges for $0 < |\zeta| < \infty$ $($resp., for $0 < |\zeta| < R$$)$.
 \end{theorem}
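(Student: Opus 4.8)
The plan is to construct a single (generally many-valued) fundamental matrix solution in the factored Frobenius form
\begin{equation*}
Y(\zeta) = P(\zeta)\, \zeta^{B}, \qquad \zeta^{B} = e^{B \ln(\zeta)},
\end{equation*}
where $B \in \bbC^{N \times N}$ is constant, $P(\dott)$ is $\bbC^{N \times N}$-valued and holomorphic near $\zeta = 0$ with $P(0)$ invertible, and the factor $\zeta^{B}$ is responsible for all non-integer powers and all logarithmic terms. Since conjugating \eqref{3.2} by a constant invertible matrix $T$ replaces $A(\dott)$ by $T A(\dott) T^{-1}$ and preserves the form of the system [as in the reduction \eqref{3.6}], I may assume from the outset that $A_0$ is in Jordan normal form. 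Inserting the ansatz, using $\frac{d}{d\zeta}\zeta^{B} = \zeta^{-1}\zeta^{B} B$, and right-multiplying by $\zeta^{-B}$ reduces the problem to finding a holomorphic $P$ solving
\begin{equation*}
\zeta P'(\zeta) = A(\zeta) P(\zeta) - P(\zeta) B,
\end{equation*}
a regular equation at $\zeta = 0$ with coefficients given by the Taylor data $A_m$ of \eqref{3.3}. The columns of such a $Y$ are linearly independent near $0$ because $\det P(0) \neq 0$, so this factorization directly produces a fundamental set.

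Next I would expand $P(\zeta) = \sum_{m \in \bbN_0} P_m \zeta^m$ and match powers of $\zeta$. The order-zero relation reads $A_0 P_0 = P_0 B$, so choosing $P_0 = I_N$ forces $B = A_0$, while for $m \in \bbN$ one obtains the recursion
\begin{equation*}
\cL_m(P_m) := (m I_N - A_0) P_m + P_m B = \sum_{k=1}^{m} A_k P_{m-k}.
\end{equation*}
Here $\cL_m$ is a Sylvester operator on $\bbC^{N \times N}$ with spectrum $\{m - \lambda + \nu : \lambda, \nu \in \sigma(A_0)\}$, hence invertible precisely when no two eigenvalues of $A_0$ differ by the positive integer $m$. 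Thus in the generic \emph{non-resonant} case (no two distinct eigenvalues of $A_0$ differ by a nonzero integer) every $\cL_m$ is invertible, the $P_m$ are determined recursively with $B = A_0$, and the construction closes. Writing $A_0 = S + \cN$ in Jordan form, the factor $\zeta^{A_0} = \zeta^{S}\sum_{k=0}^{N-1}(k!)^{-1}[\ln(\zeta)]^{k}\cN^{k}$ (using $\cN^{N} = 0$) contributes exactly the powers $\zeta^{\alpha_q}$, $\alpha_q \in \sigma(A_0)$, multiplied by polynomials in $\ln(\zeta)$ of degree at most $N-1$, while $P(\zeta) = \sum_m P_m \zeta^m$ supplies the integer shifts $\zeta^{m}$; together these yield precisely the claimed form \eqref{3.10}.

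The hard part is the \emph{resonant} case, where some eigenvalues of $A_0$ differ by positive integers, the corresponding $\cL_m$ are singular, and genuine logarithmic terms are forced. I would resolve this by a finite sequence of \emph{shearing transformations} $Y = S(\zeta)\widetilde Y$ with $S(\zeta) = \diag(\zeta^{k_1} I, \dots)$ block-diagonal, which shift the offending eigenvalues by integers and, after finitely many steps, merge each resonant cluster into coincident eigenvalues, producing an equivalent system of the same type to which the non-resonant construction applies; the resulting constant matrix $B$ is then no longer $A_0$, but it is still $N \times N$, so its nilpotent part $B_{\mathrm{nil}}$ still satisfies $B_{\mathrm{nil}}^{N} = 0$ and the degree bound $N-1$ on the logarithmic polynomials persists (alternatively one may invoke the classical reduction theorem of Sauvage). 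Finally, convergence of $P(\zeta) = \sum_m P_m \zeta^m$ for $0 < |\zeta| < R$ (resp.\ on all of $\bbC \setminus \{0\}$) follows by the method of majorants: for large $m$ one has $\cL_m = m\,\mathrm{Id} + R$ with $R(X) = X B - A_0 X$ fixed, whence $\|\cL_m^{-1}\| \le C/m$, and combining this with the Cauchy estimates $\|A_m\| = O(r^{-m})$ (valid for each $r < R$) a standard dominating-series argument gives $\sum_m \|P_m\|\,\rho^m < \infty$ for every $\rho < R$, yielding the stated radius of convergence.
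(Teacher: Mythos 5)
The paper does not prove Theorem \ref{t3.1}: it is quoted as a classical result with attributions to Hille and Kneser, a pointer to Teschl's treatment of the Frobenius method, and the surrounding text merely records the Fuchs/Hille fundamental-matrix forms \eqref{3.12}, \eqref{3.13} without derivation. Your proposal supplies exactly the classical argument those references contain, so there is no in-paper proof to compare against; judged on its own merits, the proposal is essentially sound. The ansatz $Y=P(\zeta)\zeta^{B}$, the identification $B=A_{0}$ forced by $P_{0}=I_{N}$, the Sylvester-operator recursion with spectrum $\{m-\lambda+\nu : \lambda,\nu\in\sigma(A_0)\}$, the expansion $\zeta^{A_{0}}=\zeta^{S}\sum_{k=0}^{N-1}[\ln(\zeta)]^{k}\mathcal{N}^{k}/k!$ giving the degree bound $N-1$, and the majorant argument for convergence (playing $\|\mathcal{L}_{m}^{-1}\|=O(1/m)$ against the Cauchy estimates $\|A_{m}\|=O(r^{-m})$) are all correct and complete in the non-resonant case. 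The one place you are waving your hands is the resonant case: the shearing $Y=S(\zeta)\widetilde{Y}$ with $S(\zeta)=\mathrm{diag}(\zeta^{k_{1}}I,\dots)$ turns the coefficient into $\zeta^{-1}\big[S(\zeta)^{-1}A(\zeta)S(\zeta)-\mathrm{diag}(k_{1}I,\dots)\big]$, whose $(i,j)$ block is $\zeta^{k_{j}-k_{i}}A_{ij}(\zeta)$; when $k_{j}<k_{i}$ this has a pole of order $k_{i}-k_{j}$ unless one first performs a holomorphic gauge transformation block-diagonalizing $A(\zeta)$ along the integer-resonant eigenvalue clusters (possible precisely because the inter-cluster Sylvester operators are invertible at every order). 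Without that preliminary reduction the sheared system is not ``of the same type,'' so either spell this out or make the appeal to Sauvage's reduction theorem the official argument. You should also note explicitly that after undoing the shearing the integer shifts are non-negative, so the exponents remain of the form $\alpha_{q}+m$ with $m\in\mathbb{N}_{0}$ and $\alpha_{q}\in\sigma(A_{0})$, as \eqref{3.10} requires. These are standard repairs rather than conceptual flaws.
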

 
 In this context we also refer to Sections~4.3, 4.4, particularly, Theorem 4.11, in Teschl \cite{Te12}, for a succinct treatment of the Frobenius method for first-order systems with a pole structure as in \eqref{3.2}. 

We also note that a fundamental matrix solution of \eqref{3.2} can be obtained in analogy to \eqref{3.5} in the pure Euler case. In particular, under the spectral hypothesis that  
\begin{equation}
\sigma(A_0) \cap \{\sigma(A_0) + \bbZ\} = \emptyset,    \lb{3.11} 
\end{equation}
it was proven by Fuchs \cite{Fu66} (cf.\ Hille \cite[Theorem~9.5.1]{Hi97}) that the perturbed Euler differential system \eqref{3.2} has fundamental matrix solutions of the form
\begin{equation}
Y(\zeta) = \sum_{m \in \bbN_0} C_m \, \zeta^{m I_N + A_0} C, \quad C_0 = I_N, \; 
C_{\ell} \in \bbC^{N \times N}, \; \ell \in \bbN,   \lb{3.12} 
\end{equation}
where again $C \in \bbC^{N \times N}$ is nonsingular. 

The case where the spectral assumption \eqref{3.11} on $A_0$ is violated is much more involved\footnote{In fact, we quote Hille \cite[p.~344]{Hi97} in this context: ``\dots A number of arguments are available in the literature all of them more or less corny. What I shall give here is not the corniest; \dots''}.  What follows is a shortened description of Hille \cite[Theorem~9.5.2]{Hi97}, a modified version of Frobenius' method: If \eqref{3.11} does not hold, fundamental matrix solutions of the perturbed Euler differential system \eqref{3.2} are of the form
\begin{equation}
Y(\zeta) = \sum_{j=0}^M [\ln(\zeta)]^j \sum_{m\in \bbN_0} C_{m,j} \, \zeta^{m I_N + A_0} C, 
\quad C_{0,0} = [M!] I_N, \; C_{m,j} \in \bbC^{N \times N},     \lb{3.13} 
\end{equation}
and once again $C \in \bbC^{N \times N}$ is nonsingular. A characterization of $M$ in \eqref{3.13} is possible, see, for instance, \cite[p.~342--352]{Hi97}. 

We conclude this overview by specializing the 1st-order $N \times N$ perturbed Euler system \eqref{3.2} to the $N$th-order scalar case (a special case of which is depicted in \eqref{3.1}). Consider the scalar $N$th-order differential equation 
\begin{equation}
y^{(N)}(\zeta) + b_{N-1}(\zeta) y^{(N-1)}(\zeta) + \cdots + b_1(\zeta) y'(\zeta) + b_0(\zeta) y(\zeta) = 0,  \lb{3.14}
\end{equation}
where the coefficients $b_j(\dott)$, $0 \leq j \leq N-1$, are of the form
\begin{equation}
b_j(\zeta) = \zeta^{j-N} a_j(\zeta), \quad a_j(\zeta) = \sum_{m \in \bbN_0} a_{j,m} \, \zeta^m,    \lb{3.15}
\end{equation} 
with $a_j(\dott)$ entire (resp., analytic in $D(0;R)$). The scalar ODE \eqref{3.14} subordinates to the perturbed Euler differential system \eqref{3.2} upon identifying $A(\zeta)$ with the $N \times N$ matrix 
\begin{equation}
\begin{pmatrix}
0 & 1 & 0 & 0 & \dots & & & 0 \\
0 & 1 & 1 & 0 & \dots & & & 0 \\
0 & 0 & 2 & 1 & \dots & & & 0 \\
0 & 0 & 0 & 3 & \dots & &  & 0 \\
\vdots & \vdots & \vdots & \vdots & \dots & & & \vdots \\
\vdots & \vdots & \vdots & \vdots & \dots & \ddots & \ddots & 0 \\
0 & 0 & 0 & 0 & \dots & & & 1 \\
- a_0(\zeta) & - a_1(\zeta) & - a_2(\zeta) & - a_3(\zeta) & \dots & & & (N-1) - a_{N-1}(\zeta)  
\end{pmatrix}      \lb{3.16} 
\end{equation}
and identifying $Y(\zeta)$ with $(Y_1(\zeta), \dots, Y_N(\zeta))$, where the solutions 
$Y_j(\dott) \in \bbC^{N \times 1}$ are given by 
\begin{equation}
Y_j(\dott) = (y_{j,1}(\dott), \dots, y_{j,N}(\dott))^\top, \quad y_{j,k}(\zeta) = \zeta^{k-1} y_j^{(k-1)}(\zeta), \quad 
1 \leq j,k \leq N,     \lb{3.17}
\end{equation}
with $y_j(\dott)$, $1 \leq j \leq N$, linearly independent solutions of \eqref{3.14}. In this scalar context the 
matrix $A_0 \in \bbC^{N \times N}$ in \eqref{3.3} is thus of the form
\begin{equation}
A_0 = \begin{pmatrix}
0 & 1 & 0 & 0 & \dots & & & 0 \\
0 & 1 & 1 & 0 & \dots & & & 0 \\
0 & 0 & 2 & 1 & \dots & & & 0 \\
0 & 0 & 0 & 3 & \dots & &  & 0 \\
\vdots & \vdots & \vdots & \vdots & \dots & & & \vdots \\
\vdots & \vdots & \vdots & \vdots & \dots & \ddots & \ddots & 0 \\
0 & 0 & 0 & 0 & \dots & & & 1 \\
- a_{0,0} & - a_{1,0} & - a_{2,0} & - a_{3,0} & \dots & & & (N-1) - a_{N-1,0}  
\end{pmatrix}      \lb{3.18} 
\end{equation}
and hence the eigenvalues $\alpha_q$ of $A_0$ prominently figuring in the solution \eqref{3.10} are determined via the indicial equation \eqref{3.9}, $D_N(\dott) = 0$, where 
\begin{align} 
D_N(z) &= {\det}_{\bbC^N} (z I_N - A_0)     \no \\
&= \sum_{k=0}^N a_{N-k,0} \begin{cases} \prod_{r=1}^{N-k} [z - (r-1)], & 0 \leq k \leq N-1, \\
1, & k=N,     
\end{cases}  \quad a_{N,0} = 1, \; z \in \bbC.     \lb{3.19}
\end{align}
 
Given these results we can return to the half-line differential expression $\tau_{2n}(c)$ in \eqref{3.1}, the special case of the scalar case \eqref{3.14} with $N = 2n$ and (frequently explicitly indicating the $c$-dependence of 
the coefficients) 
\begin{equation}
b_j(\zeta;c) = 0, \; 1 \leq j \leq 2n-1, \quad b_0(\zeta;c) = (-1)^n c \, \zeta^{-2n} - (-1)^n \mu, \quad \mu \in \bbC,   \lb{3.20} 
\end{equation}
equivalently, 
\begin{equation}
a_j(\zeta;c) = 0, \;, 1 \leq j \leq 2n-1, \quad a_0(\zeta;c) = (-1)^n c  - (-1)^n \mu \, \zeta^{2n}, \quad \mu \in \bbC.     \lb{3.21}
\end{equation} 
In this case the indicial equation further reduces to 
\begin{equation}
D_{2n}(z;c) = \prod_{j=1}^{2n} [z - (j-1)] + (-1)^n c = 0, \quad z \in \bbC.    \lb{3.22} 
\end{equation}

Thus, we can state the following result.

\begin{theorem} \lb{t3.2}
Let $c \in \bbR$, $\mu \in \bbC$. Then for any $d \in (0,\infty)$, the number of $L^2((0,d); dx)$-solutions of 
$\tau_{2n}(c) y(\mu,\dott) = \mu y(\mu;\dott)$, denoted by $\#_{L^2}\big(\tau_{2n}(c)|_{(0,d)}\big)$, is independent of $\mu$. In particular,
\begin{equation}
n \leq \#_{L^2}\big(\tau_{2n}(c)|_{(0,d)}\big) \leq 2n.     \lb{3.23}
\end{equation}
Moreover, the deficiency indices $n_{\pm}(T_{2n, min}(c))$ $($with $T_{2n, min}(c)$ representing the closure of 
$\tau_{2n}(c) \big|_{C_0^{\infty}((0,\infty))}$ in $L^2((0,\infty); dx)$$)$ equal
\begin{equation}
n_{\pm} (T_{2n, min}(c)) = \#_{L^2}\big(\tau_{2n}(c)|_{(0,d)}\big) - n.     \lb{3.24} 
\end{equation}
and hence 
\begin{equation}
0 \leq n_{\pm} (T_{2n, min}(c)) \leq n.     \lb{3.25} 
\end{equation} 
In particular, 
\begin{align}
\begin{split} 
& \text{$T_{2n, min}(c)$ is self-adjoint $\big($equivalently, $\ddot T_{2n, min}$ is essentially 
self-adjoint\,$\big)$}     \\
& \quad \text{in $L^2((0,\infty);dx)$ if and only if \ $\#_{L^2}\big(\tau_{2n}(c)|_{(0,d)}\big) = n$.}
\end{split}
\end{align}
\end{theorem}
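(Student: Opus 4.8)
The plan is to reduce all five assertions to the behavior of the fundamental solutions near the singular endpoint $x=0$, and to show that this behavior — hence the $L^2((0,d);dx)$-count — is governed entirely by the $\mu$-independent indicial roots $\alpha_q$ of $D_{2n}(\dott;c)=0$. The decisive observation, and the conceptual heart of the matter, is this: when $\tau_{2n}(c)y=\mu y$ is cast in the scalar form \eqref{3.14}, the spectral parameter enters only through the single coefficient $a_0(\zeta;c)=(-1)^n c-(-1)^n\mu\,\zeta^{2n}$ of \eqref{3.21}, i.e.\ through the term $-(-1)^n\mu\,\zeta^{2n}$, which vanishes to order $2n$ at $\zeta=0$. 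Consequently the constant matrix $A_0$ of \eqref{3.18}, and therefore the indicial equation \eqref{3.22} together with its roots $\alpha_q$, are completely independent of $\mu$.

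Next I would invoke Theorem \ref{t3.1} (Hille--Kneser): since $a_0(\dott;c)$ is entire in $\zeta$, there is a fundamental system $y_1,\dots,y_{2n}$ near $\zeta=0$ in which each solution has the Frobenius form $y_j(\zeta)=\sum_{m\in\bbN_0}P_{j,m}(\ln(\zeta))\,\zeta^{m+\alpha_{q(j)}}$ with $\alpha_{q(j)}$ one of the indicial roots and $P_{j,m}$ polynomials. Arranging the basis as in \eqref{3.12} (non-resonant case) or \eqref{3.13} (resonant case), one has exactly $m_q$ solutions carrying each root $\alpha_q$ of multiplicity $m_q$ as its leading exponent: the leading $\zeta^{\alpha_q}$-behavior, with logarithmic factors only up to the relevant Jordan-block degree, is already encoded in $\zeta^{A_0}$, while the additional resonant logarithms (those with $j\geq 1$ in \eqref{3.13}) appear only at strictly higher powers $\zeta^{m+\alpha_q}$, $m\geq 1$. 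Thus the leading asymptotics of $y_j$ as $x\downarrow 0$ is $x^{\alpha_{q(j)}}$ times a power of $\ln(x)$.

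I would then apply the $L^2$-criterion \eqref{1.15}: because $\int_0^{\varepsilon} x^{2\Re(\alpha)}|\ln(x)|^{2k}\,dx$ converges if and only if $\Re(\alpha)>-1/2$ — the borderline $\Re(\alpha)=-1/2$ failing since $\int_0 x^{-1}|\ln(x)|^{2k}\,dx$ diverges for every $k\in\bbN_0$ — each $y_j$ lies in $L^2((0,d);dx)$ precisely when $\Re(\alpha_{q(j)})>-1/2$; the logarithmic factors and the higher-order terms $m\geq 1$ are strictly more integrable and hence irrelevant. Therefore $\#_{L^2}\big(\tau_{2n}(c)|_{(0,d)}\big)$ equals the number of roots of \eqref{3.22}, counted with multiplicity, whose real part exceeds $-1/2$, and this count does not involve $\mu$; the asserted $\mu$-independence follows, and in particular the count may be evaluated at $\mu=0$. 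Assembling the remaining claims is now routine from Section \ref{s2}: evaluating at $\mu=\pm i$ identifies $\#_{L^2}\big(\tau_{2n}(c)|_{(0,d)}\big)$ with $n_{\pm}(T_{2n,min,(0,d)})$, so \eqref{3.23} is exactly \eqref{2.19}; since $cx^{-2n}$ is bounded near $\infty$, $\tau_{2n}(c)$ is limit point at $\infty$, giving $n_{\pm}(T_{2n,min,(d,\infty)})=n$, and substituting this with the above into the Kodaira decomposition \eqref{2.20} yields \eqref{3.24}, whence \eqref{3.25}. Finally, $T_{2n,min}(c)$ is self-adjoint if and only if $n_{\pm}(T_{2n,min}(c))=0$, which by \eqref{3.24} is equivalent to $\#_{L^2}\big(\tau_{2n}(c)|_{(0,d)}\big)=n$.

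I expect the only genuinely delicate point to be the bookkeeping in the resonant/logarithmic case — confirming that even when indicial roots differ by positive integers the fundamental basis can be organized so each solution has a well-defined leading exponent among the $\alpha_q$, and that the extra logarithmic corrections, appearing only at higher powers of $\zeta$, never alter the $L^2$-count. Everything else is an assembly of ingredients already in hand; the substantive insight remains the elementary observation that $\mu$ perturbs the coefficient only at order $\zeta^{2n}$ and so leaves the indicial data, and with it the deficiency indices, untouched.
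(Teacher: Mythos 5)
Your proposal is correct and follows essentially the same route as the paper's proof: $\mu$-independence of the indicial data $A_0$ and the roots of $D_{2n}(\dott;c)$ via the Hille--Kneser/Frobenius structure of solutions, the $L^2$-criterion $\Re(\alpha)>-1/2$ (insensitive to logarithmic factors), limit point at $\infty$, and Kodaira's decomposition \eqref{2.20} together with \eqref{2.19}. Your explicit remark that $\mu$ enters only through a term vanishing to order $\zeta^{2n}$, and your handling of the resonant logarithmic bookkeeping, merely make explicit what the paper's proof states more tersely.
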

\begin{proof}
The $\mu$-independence of $\#_{L^2}\big(\tau_{2n}(c)|_{(0,d)}\big)$ follows from the structure of the solutions $Y_j$ in \eqref{3.10}, the fact that for each $d \in (0,\infty)$, the power $x^{\alpha}$ lies in $L^2((0,d); dx)$ if and only if 
$\Re(\alpha) > - 1/2$, independently of the presence of any logarithmic factors, and finally that only the spectrum of $A_0$ determines the powers $\alpha_q$ in \eqref{3.10}. 

Since $c \in \bbR$, $\tau_{2n}(c)$ possesses an anti-unitary conjugation operator (effected by complex conjugation of elements in $L^2((0,\infty); dx)$) and one obtains by \eqref{2.17}, 
\begin{equation} 
n_+ (T_{2n, min}(c)) = n_- (T_{2n, min}(c)). 
\end{equation} 
Moreover by a special case of Kodaira's decomposition principle \eqref{2.20} for deficiency indices, 
\begin{align}
n_{\pm}(T_{2n, min}(c)) &= n_{\pm}\Big(\tau_{2n}(c)\big|_{C_0^{\infty}((0,d))}\Big) +
n_{\pm}\Big(\tau_{2n}(c)\big|_{C_0^{\infty}((d,\infty))}\Big) - 2n   \no \\
&= n_{\pm}\Big(\tau_{2n}(c)\big|_{C_0^{\infty}((0,d))}\Big) - n     \no \\
&= \#_{L^2}\big(\tau_{2n}(c)|_{(0,d)}\big) - n,     \lb{3.26} 
\end{align} 
since 
\begin{equation}
n_{\pm}\Big(\tau_{2n}(c)\big|_{C_0^{\infty}((d,\infty))}\Big) = n.     \lb{3.27} 
\end{equation}
Relation \eqref{3.27} holds since $\tau_{2n}(c)$ is regular at $d$ and, as  $x^{-2n}$ is bounded on the interval $[d,\infty)$ (cf.\ \cite[Sect.~14.7]{Na68}), $\tau_{2n}(c)$ is in the limit point case at $\infty$ since 
$(-1)^n d^{2n}/dx^{2n}$ is in the limit point case at $\infty$. Moreover, by \eqref{2.19}, 
\begin{equation}
n \leq n_{\pm}\Big(\tau_{2n}(c)\big|_{C_0^{\infty}((0,d))}\Big) \leq 2n,    \lb{3.28} 
\end{equation}
implying \eqref{3.23} and \eqref{3.25}. 
\end{proof}

\begin{remark} \lb{r3.3}
$(i)$ The independence of $\#_{L^2}\big(\tau_{2n}(c)|_{(0,d)}\big)$ with respect to $\mu$ permits one  to choose the by far simplest situation by taking $\mu = 0$ when counting the number of $L^2((0,d); dx)$-solutions of $\tau_{2n}(c) y(\mu,\dott) = \mu y(\mu;\dott)$. This in turn grants one to focus on solutions of the simple power-type $x^{\alpha}$ as in \eqref{3.10} (ignoring the possibility of additional logarithmic factors which, however, cannot influence the $L^2$- or non-$L^2$-behavior of solutions near $x=0$). In particular, considering
\begin{equation}
y_{\alpha}(x) = x^{\alpha} P(\ln(x)), \quad x \in (0,\infty), \; \alpha \in \bbC,     \lb{3.29}
\end{equation} 
where $P(\dott)$ is any polynomial, then for all $d \in (0,\infty)$,  
\begin{align}
y_{\alpha}(\dott) \in L^2((0,d); dx) \, \text{ if and ony if } \, \Re(\alpha) > -1/2.    \lb{3.30} 
\end{align}
Thus, by \eqref{3.10}, $\Re(\alpha) > - 1/2$, respectively, $\Re(\alpha) \leq - 1/2$, is the criterion deciding whether or not a particular solution with 
power-type behavior $x^{\alpha}$ (again, ignoring possible logarithmic factors) contributes to 
$\#_{L^2}\big(\tau_{2n}(c)|_{(0,d)}\big)$. \\[1mm]
$(ii)$ It will be shown in Corollary \ref{c4.8} that any permissible integer value for $\#(\tau_{2n}|_{(0,d)})$ in \eqref{3.23} actually is attained for some $c \in \bbR$.
\hfill $\diamond$ 
\end{remark}

\begin{remark} \lb{r3.3a}
One observes that $D_{2n}(.;c)$ possesses the symmetry
\begin{equation}
D_{2n}(-(1/2) + n + z) = D_{2n}(-(1/2) + n - z).
\end{equation} 
In particular, at $z=0$ one obtains
\begin{equation}
D_{2n}((-1/2) + n) = (-1)^n \Bigg( \prod_{j=1}^{n} [j - 1/2]^2 + c\Bigg) = (-1)^n \left(\f{[(2n -1)!!]^{2}}{2^{2n}} + c \right).
\end{equation} 
Consequently, for $c=-[(2n -1)!!]^{2}\big/2^{2n}$ one has a double zero at $\alpha=k - (1/2)$ and there are two solutions
of the type
\begin{equation}
y_1(0,x,c) = x^{k-(1/2)}, \qquad y_2(0,x,c) = x^{k-(1/2)} \ln(x)
\end{equation}
in this case.
\hfill $\diamond$ 
\end{remark}

Next, we now recall the special situation $n=1$ which is explicitly solvable for general spectral parameter $\mu$ in terms of Bessel functions as follows: 

\begin{example} \lb{e3.4} Assuming the case $n=1$ in \eqref{3.1} we consider 
\begin{align}
\begin{split} 
- y''(\mu,x) + c x^{-2} y(\mu,x) = \mu y(\mu,x),&    \\
\mu \in \bbC, \; x \in (0,\infty), \; c  \in \bbR.&     \lb{3.31} 
\end{split} 
\end{align}
The associated characteristic equation 
\begin{equation}
D_2(z; c) = z(z - 1) - c = 0, 
\lb{3.36} 
\end{equation}
has the following two complex-valued solutions
\begin{align} 
\begin{split} 
& \alpha_1(c) = (1/2) - \sqrt{c + (1/4)}, \\ 
& \alpha_2(c) =  (1/2) + \sqrt{c + (1/4)},      \lb{3.37} 
\end{split} 
\end{align}
choosing the principal branch for $[ \dott ]^{1/2}$ with branch cut $(-\infty,0]$, such that 
\begin{equation}
z^{1/2} = r^{1/2} e^{i \varphi/2}, \quad z = r e^{i \varphi}, \quad r, r^{1/2} \in [0,\infty), \; \varphi \in (-\pi,\pi]. 
\lb{3.32a}
\end{equation}
With this convention in place one checks that for all $c \in \bbR$, one has the ordering, 
\begin{equation}
\Re(\alpha_1(c)) \leq 1/2 \leq \Re(\alpha_2(c)).
\end{equation}

\noindent 
{\bf $(\alpha)$ Generic case:} Suppose $c\in \mathbb{R}$ is such that  
\begin{equation}
[\alpha_1(c)-\alpha_2(c)]/2 \not\in \mathbb{Z}.
\end{equation} 
Then the nonhomogenous differential equation \eqref{3.31} has the following fundamental system of solutions  
$($cf.\ \cite[No.~9.1.49, p.~362]{AS72}$)$
\begin{align}
y_1(\mu,x; c) &= (\pi/2) \mu^{-\gamma(c)/2}
x^{1/2} J_{\gamma(c)} \big(\mu^{1/2}x\big),    \no \\
y_2(\mu,x; c) &= \sin(\pi\gamma(c)) \mu^{\gamma(c)/2} x^{1/2} J_{-\gamma(c)} \big(\mu^{1/2}x\big),    \lb{3.32} \\
& \hspace*{3.25cm} \mu\in\bbC, \; x\in(0,\infty),   \no 
\end{align}
where
\begin{equation}
\gamma(c) = \sqrt{c + (1/4)}, \quad \gamma \in [0,\infty), \quad c \in \bbR, 
\end{equation} 
(Thus, $\gamma(c) \in \{[0,\infty) \backslash\bbN_0\} \cup i (0,\infty)$ in the generic case.) 

\medskip

\noindent
{\bf $(\beta)$ Exceptional Cases:} Suppose $c \in \bbR$ is such that 
\begin{equation} 
[\alpha_1(c)-\alpha_2(c)]/2 \in \mathbb{Z}, 
\end{equation}
then
\begin{equation}
c = k^2 - (1/4), \quad k \in \bbN_0.
\end{equation}
More precisely, for $k\in \bbN_0$,
\begin{equation}
[\alpha_{1}(c)-\alpha_{2}(c)]/2 = \pm k \, \text{ if and only if } \, c=k^2 - (1/4). 
\end{equation}
Furthermore,
\begin{equation}
\alpha_1(c)= \alpha_2(c) \, \text{ if and only if } \, c = - 1/4. 
\end{equation} 

In the exceptional case, where $\gamma(c) = k \in \bbN_0$, one obtains 
\begin{align}
y_1\big(\mu,x; k^2-(1/2)\big) &= (\pi/2) \mu^{-k/2} x^{1/2} J_k \big(\mu^{1/2}x\big), \no \\
y_2\big(\mu,x; k^2-(1/2)\big) &= \mu^{k/2} x^{1/2} \big[ -Y_k \big(\mu^{1/2}x\big) 
+ \pi^{-1}\ln(\mu)J_k \big(\mu^{1/2}x\big)\big],  \lb{3.33} \\
& \hspace*{1.25cm} \mu\in\bbC, \; x\in(0,\infty), \; c \in \big\{k^2-(1/4)\big\}_{k\in\bbN_0}. \no
\end{align}
Here $J_{\kappa}(\dott)$ represent the standard Bessel functions of order $\kappa \in \bbC$ and first kind, and $Y_k(\dott)$ denotes the Bessel function of order $k \in \bbN_0$ and second kind $($see, e.g., 
\cite[Ch.~9]{AS72}$)$. Moreover, one verifies $($cf.\ \cite[p.~360]{AS72}$)$ that 
\begin{equation}
W(y_2(\mu,\dott, c), y_1(\mu,\dott; c)) = 1, \quad
\mu \in\bbC, \; c \in \bbR      \lb{3.34}
\end{equation} 
$($here $W(f,g) = fg' - f'g$ denotes the Wronkian of $f$ and $g$$)$, 
and that the fundamental system of solutions $y_1(\mu,\cdot; c), y_2(\mu,\cdot, c)$ \eqref{3.32}, \eqref{3.33} of \eqref{3.31} is entire with respect to $\mu \in \bbC$ for fixed $x \in (0,\infty)$, and real-valued for $\mu\in\bbR$.

As $\mu \to 0$, the fundamental systems of solutions \eqref{3.32}, \eqref{3.33}, upon disregarding normalization, greatly simplify to
\begin{align} 
& y_1(0,x; c) = x^{\alpha_1(c)}, \quad c \in \bbR,  \quad 
y_2(0,x; c) = \begin{cases} x^{\alpha_2(c)}, 
\quad c \in \bbR \backslash \{- 1/4\}, \\
x^{1/2} \ln(x), \quad c = -1/4;  \end{cases}  \no \\
& \hspace*{10.15cm} x\in(0,\infty),    \lb{3.35}
\end{align} 
underscoring once again the advantage of choosing $\mu = 0$. 

One observes that in accordance with \eqref{1.9} (see also \eqref{1.10}) and Remark \ref{r3.3a}, the logarithmic case in \eqref{3.35} occurs at $c = -1/4$, that is, precisely at the borderline of semiboundedness of $T_{min,2}(c)$.

Thus, determining whether or not $\Re(\alpha_j(c) > - 1/2$, $j=1,2$, one concludes that 
\begin{equation}
 \#_{L^2}\big(\tau_{2}(c)|_{(0,d)}\big) = \begin{cases}
 1, & \mbox{if}\quad c \geq 3/4, \\
 2, & \mbox{if}\quad c < 3/4. 
 \end{cases}    \lb{3.38} 
\end{equation}
\end{example}

\begin{remark} \lb{r3.4a}
In view of the next example, where $n=2$, in fact, in view of the general case $n \in \bbN$, it might be interesting to rewrite the Bessel function solutions in the case $n=1$ in terms of the corresponding generalized hypergeometric function and Meijer's $G$-function as follows: In the generic case, where $c\in \mathbb{R}$ is such that $[\alpha_1(c)-\alpha_2(c)]/2 \not\in \mathbb{Z}$, the nonhomogenous differential equation \eqref{3.31} has the following fundamental system of solutions  
\begin{align}
y_1(\mu,x; c) &= x^{\alpha_1(c)} \,_{0} F_1\left(\!\begin{array}{c}\\
{\scriptstyle  1+\f{\alpha_1(c)-\alpha_2(c)}{2}}
\end{array} \bigg\vert\,  -\f{\mu x^2}{4} \right),   \no \\
y_2(\mu,x; c) &= x^{\alpha_2(c)} \,_{0} F_1\left(\!\begin{array}{c}\\
{\scriptstyle  1+\f{\alpha_2(c)-\alpha_1(c)}{2}}
\end{array} \bigg\vert\,  -\f{\mu x^2}{4} \right),    \lb{3.39} \\
& \hspace*{3.35cm} \mu\in\bbC, \; x\in(0,\infty).   \no 
\end{align}
Here $\,_{0} F_1\Big(\!\begin{array}{c}\\ {\scriptstyle  b_1} \end{array} \Big\vert\,  \dott \Big)$ 
represents the generalized hypergeometric function given by
\begin{align}
\,_{0} F_1\Big(\!\begin{array}{c}\\ {\scriptstyle  b_1} \end{array} \Big\vert\, \zeta \Big) = \sum_{k \in \bbN_0} \f{\zeta^k}{(b_1)_k k!}, \quad 
b_1 \in \bbC \backslash \{- \bbN_0\}, \; \zeta \in \bbC, 
\end{align}
with $(a)_k$ denoting Pochhammer's symbol,
\begin{equation}
(a)_0 =1, \quad (a)_k = \prod_{j=0}^{k-1} (a+j) = \Gamma(a+k)/\Gamma(a), \quad k \in \bbN, \; a \in \bbC.
\end{equation}
In particular, $\,_{0} F_1\Big(\!\begin{array}{c}\\ {\scriptstyle  b_1} \end{array} \Big\vert\, \zeta \Big)$ 
is entire in $\zeta \in \bbC$ and 
\begin{equation}
\,_{0} F_1\Big(\!\begin{array}{c}\\ {\scriptstyle  b_1} \end{array} \Big\vert\, \zeta \Big) 
\underset{\zeta \to 0}{=} 1 + \Oh(\zeta). 
\end{equation}

In the exceptional case, where $\gamma(c) = k \in \bbN_0$, one obtains 
\begin{align}
y_1\big(\mu,x; k^2-(1/2)\big) &= x^{k+(1/2)} \,_{0} F_1\left(\!\begin{array}{c}\\
{\scriptstyle  1+k} \end{array} \bigg\vert\,  -\f{\mu x^2}{4} \right),   \no \\
y_2\big(\mu,x; k^2-(1/2)\big) &= \Gamma(k+1) 2^k \mu^{-k/2} x^{1/2} 
G_{0,2}^{2,0}\left(
\!\!\begin{array}{c}\\ {\scriptstyle k/2 ; -k/2} \end{array} \bigg\vert\, - \f{\mu x^2}{4}\right)  \lb{3.52} \\
& \quad + \big[\pi (-1)^{k+1} i^{k+1} + \ln(\mu)\big] x^{k+(1/2)} 
\,_{0} F_1\left(\!\begin{array}{c}\\ {\scriptstyle  1+k}
\end{array} \bigg\vert\,  -\f{\mu x^2}{4} \right),   \no \\
& \hspace*{2.8cm} \mu\in\bbC, \; x\in(0,\infty), \; c \in \big\{k^2-(1/4)\big\}_{k\in\bbN_0}. \no
\end{align}
Here Meijer's $G$-function, $G_{0,2}^{2,0}\Big(\!\begin{array}{c}\\ {\scriptstyle  c_1, c_2} \end{array} \Big\vert\,  \dott \Big)$, is given by a Mellin--Barnes-type integral,
\begin{equation}
G_{0,2}^{2,0}\Big(\!\begin{array}{c}\\ {\scriptstyle  c_1, c_2} \end{array} \Big\vert\, \zeta \Big) 
= \f{1}{2\pi i} \int_{\cC} ds \, \zeta^s \Gamma(c_1-s) \Gamma(c_2-s), 
\lb{3.53} 
\end{equation}
where $\cC$ is a contour beginning and ending at $+\infty$ encircling all poles of $\Gamma(c_j-s)$, $j=1,2$, once in negative orientation, and the left-hand side of \eqref{3.53} is defined as the (absolutely convergent) sum of residues of the right-hand side. The exceptional case where $c_1$ and $c_2$ differ by an integer is treated by a limiting argument. (For more details see \cite{GH23}.) 
\hfill $\diamond$
\end{remark}

For details on generalized hypergeometric functions and Meijer's $G$-function we refer, for instance, to 
\cite{BS13}, \cite[Ch.~IV, Sects.~5.3--5.6]{EMOT53}, \cite[Ch.~V]{Lu69}, \cite[Ch.~V]{Lu75}, and \cite[Ch.~16]{OLBC10}, \cite[Sect.~8.2]{PBM90}. 

\begin{example} \lb{e3.5} Assuming the case $n=2$ in \eqref{3.1} we consider 
\begin{align}
\begin{split} 
y''''(\mu,x) + c x^{-4} y(\mu,x) = \mu y(\mu,x),&   \lb{3.42} \\
x \in (0,\infty), \; \mu \in \bbC, \; c \in \bbR.&
\end{split} 
\end{align}
The associated characteristic equation 
\begin{equation}
D_4(z; c) = z(z - 1)(z-2)(z-3) - c = 0,  \quad z \in \bbC, \; c \in \bbR,   \lb{3.56} 
\end{equation}
has the following four complex-valued solutions,
\begin{align}
\begin{split} 
\alpha_1(c)&= \left[3-\sqrt{5+4 \sqrt{1-c}} \,\right]\bigg/2,\\
\alpha_2(c)&= \left[3-\sqrt{5-4 \sqrt{1-c}} \,\right]\bigg/2,\\
\alpha_3(c)&= \left[3+\sqrt{5-4 \sqrt{1-c}} \,\right]\bigg/2,\\
\alpha_4(c)&= \left[3+\sqrt{5+4 \sqrt{1-c}} \, \right]\bigg/2; \quad c \in \bbR,  \lb{3.57} 
\end{split}
\end{align}
employing the principal branch \eqref{3.32a} for $[\dott]^{1/2}$. 
With this convention, one checks that for all $c\in \mathbb{R}$, one has
\begin{equation}
\Re(\alpha_1(c)) \leq \Re(\alpha_2(c))  \leq 3/2 \leq  \Re(\alpha_3(c)) \leq \Re(\alpha_4(c)).
\end{equation} 

\noindent 
{\bf $(\alpha)$ Generic case:} Suppose $c\in \mathbb{R}$ is such that  
\begin{equation} 
[\alpha_{j}(c)-\alpha_{j'}(c)]/4 \not\in \mathbb{Z}, \, \text{ for all } \, 1 \leq j, j' \leq4, \; j\not=j'.
\end{equation} 
Then the nonhomogenous differential equation \eqref{3.42} has the following fundamental system of solutions, 
\begin{align}
\begin{split}
y_1(\mu,x;c)&=x^{\alpha_1(c)} \,_{0} F_3\left(\!\begin{array}{c}\\
{\scriptstyle  1+\f{\alpha_1(c)-\alpha_2(c)}{4}, 1+\f{\alpha_1(c)-\alpha_3(c)}{4}, 1+\f{\alpha_1(c)-\alpha_4(c)}{4}}
\end{array} \bigg\vert\,  \f{\mu x^4}{256} \right),\\
y_2(\mu,x;c)&=x^{\alpha_2(c)} \,_{0} F_3\left(\!\begin{array}{c}\\
{\scriptstyle  1+\f{\alpha_2(c)-\alpha_1(c)}{4}, 1+\f{\alpha_2(c)-\alpha_3(c)}{4}, 1+\f{\alpha_2(c)-\alpha_4(c)}{4}}
\end{array} \bigg\vert\,  \f{\mu x^4}{256} \right),\\
y_3(\mu,x;c)&=x^{\alpha_3(c)} \,_{0} F_3\left(\!\begin{array}{c}\\
{\scriptstyle  1+\f{\alpha_3(c)-\alpha_1(c)}{4}, 1+\f{\alpha_3(c)-\alpha_2(c)}{4}, 1+\f{\alpha_3(c)-\alpha_4(c)}{4}}
\end{array} \bigg\vert\,  \f{\mu x^4}{256} \right),\\
y_4(\mu,x;c)&=x^{\alpha_4(c)} \,_{0} F_3\left(\!\begin{array}{c}\\
{\scriptstyle  1+\f{\alpha_4(c)-\alpha_1(c)}{4}, 1+\f{\alpha_4(c)-\alpha_2(c)}{4}, 1+\f{\alpha_4(c)-\alpha_3(c)}{4}}
\end{array} \bigg\vert\,  \f{\mu x^4}{256} \right);
\end{split} \\
& \hspace*{7cm} \mu \in \bbC, \; x \in (0,\infty).   \no 
\end{align}
Asymptotically, 
\begin{equation} 
y_j(\mu,x;c) \underset{x\downarrow 0}{=} x^{\alpha_j(c)}[1+\Oh(x)], \quad 1 \leq j \leq 4,
\end{equation} 
and thus, the four functions are indeed linearly independent.

Here $\,_{0} F_3\Big(\!\begin{array}{c}\\ {\scriptstyle  b_1, b_2, b_3} \end{array} \Big\vert\,  \dott \Big)$ 
represents the generalized hypergeometric function given by
\begin{align}
\,_{0} F_3\Big(\!\begin{array}{c}\\ {\scriptstyle  b_1, b_2, b_3} \end{array} \Big\vert\, \zeta \Big) = \sum_{k \in \bbN_0} \f{\zeta^k}{(b_1)_k (b_2)_k (b_3)_k k!}, \quad 
b_1, b_2, b_3 \in \bbC \backslash \{- \bbN_0\}, \; \zeta \in \bbC. 
\end{align}
Again, $\,_{0} F_3\Big(\!\begin{array}{c}\\ {\scriptstyle  b_1, b_2, b_3} \end{array} \Big\vert\, \zeta \Big)$ is entire in $\zeta \in \bbC$ and 
\begin{equation}
\,_{0} F_3\Big(\!\begin{array}{c}\\ {\scriptstyle  b_1, b_2, b_3} \end{array} \Big\vert\, \zeta \Big) 
\underset{\zeta \to 0}{=} 1 + \Oh(\zeta). 
\end{equation}

That these functions are in fact solutions of \eqref{3.42} can be confirmed by direct verification using the differential equation for generalized hypergeometric functions. 

\medskip

\noindent
{\bf $(\beta)$ Exceptional Cases:} Suppose $c \in \bbR$ is such that 
\begin{equation} 
[\alpha_{j}(c)-\alpha_{j'}(c)]/4 \in \mathbb{Z} \, \text{ for some } \, 1 \leq j, j' \leq 4, \; j\not=j', 
\end{equation}
then
\begin{equation}
\text{either } \, c = 1 - 20 k^2 + 64 k^4, \, \text{ or, } \, c = - (9/16) +10 k^2 - 16 k^4, \quad k \in \bbN_0.
\end{equation}
More precisely, for $k\in \bbN_0$,
\begin{align}
\begin{split}
[\alpha_{1}(c)-\alpha_{2}(c)]/4 &= \pm k \, \text{ implies } \, c=1 - 20 k^2 + 64 k^4,\\
[\alpha_{1}(c)-\alpha_{3}(c)]/4 &= \pm k \, \text{ implies } \, c=1 - 20 k^2 + 64 k^4,\\
(\alpha_{1}(c)-\alpha_{4}(c)]/4 &= \pm k  \, \text{ implies } \, c= - (9/16) +10 k^2 - 16 k^4,\\
[\alpha_{2}(c)-\alpha_{3}(c)]/4 &= \pm k \, \text{ implies } \, c= - (9/16) +10 k^2 - 16 k^4,\\
(\alpha_{2}(c)-\alpha_{4}(c)]/4 &= \pm k \, \text{ implies } \, c=1 - 20 k^2 + 64 k^4,\\
[\alpha_{3}(c)-\alpha_{4}(c)]/4 &= \pm k \, \text{ implies } \, c=1 - 20 k^2 + 64 k^4.
\end{split} 
\end{align}
Furthermore,
\begin{equation}
\alpha_1(c)= \alpha_2(c) \, \text{ if and only if } \, \alpha_3(c)= \alpha_4(c)
 \, \text{ if and only if } \, c =1
\end{equation} 
and 
\begin{equation} 
\alpha_2(c)= \alpha_3(c) \, \text{ if and only if } \, c = - 9/16.
\end{equation}  

\medskip
\noindent
If $c=1$, then 
\begin{equation}
\alpha_1(1) = \alpha_2(1) = \big[3 - \sqrt{5}\,\big]\big/2, \quad 
\alpha_3(1) = \alpha_4(1) = \big[3 + \sqrt{5}\,\big]\big/2,
\end{equation} 
and a fundamental system of solutions is given by, 
\begin{align}
\begin{split} 
y_1(\mu,x;1)&=x^{[3-\sqrt{5}]/2} \,_{0} F_3\left(\!\!\begin{array}{c}\\
{\scriptstyle  1, 1-\f{\sqrt{5}}{4},1-\f{\sqrt{5}}{4}}
\end{array} \bigg\vert\,  \f{\mu x^4}{256} \right),\\
y_2(\mu,x;1)&=G_{0,4}^{2,0}\left(
\!\!\begin{array}{c}\\
{\scriptstyle  \f{3-\sqrt{5}}{8} ,\f{3-\sqrt{5}}{8} ; \f{3+\sqrt{5}}{8},\f{3+\sqrt{5}}{8}} 
\end{array}
\bigg\vert\,  \f{\mu x^4}{256}
\right),\\
y_3(\mu,x;1)&=x^{[3+\sqrt{5}]/2} \,_{0} F_3\left(\!\!\begin{array}{c}\\
{\scriptstyle  1, 1+\f{\sqrt{5}}{4},1+\f{\sqrt{5}}{4}}
\end{array} \bigg\vert\,  \f{\mu x^4}{256} \right),\\
y_4(\mu,x;1)&=G_{0,4}^{2,0}\left(
\!\!\begin{array}{c}\\
{\scriptstyle  \f{3+\sqrt{5}}{8} ,\f{3+\sqrt{5}}{8} ; \f{3-\sqrt{5}}{8},\f{3-\sqrt{5}}{8}} 
\end{array}
\bigg\vert\,  \f{\mu x^4}{256}
\right);
\end{split} \\
& \hspace*{3.3cm} \mu \in \bbC, \; x \in (0,\infty).   \no 
\end{align}
Asymptotically, 
\begin{align}
\begin{split} 
& y_2(\mu,x;1) \underset{x \downarrow 0}{=} c_2 x^{[3-\sqrt{5}]/2} \ln(x)[1 + \Oh(x)], \\ 
& y_4(\mu,x;1) \underset{x \downarrow 0}{=} c_4 x^{[3+\sqrt{5}]/2} \ln(x)[1 + \Oh(x)].
\end{split}
\end{align}
Here Meijer's $G$-function, $G_{0,4}^{2,0}\Big(\!\begin{array}{c}\\ {\scriptstyle  c_1, c_2; c_3, c_4} \end{array} \Big\vert\,  \dott \Big)$, is again given by a Mellin--Barnes-type integral,
\begin{equation}
G_{0,4}^{2,0}\Big(\!\begin{array}{c}\\ {\scriptstyle  c_1, c_2; c_3, c_4} \end{array} \Big\vert\, \zeta \Big) 
= \f{1}{2\pi i} \int_{\cC} ds \, \zeta^s \f{\Gamma(c_1-s) \Gamma(c_2-s)}{\Gamma(1-c_3+s) \Gamma(1-c_4+s)}, 
\lb{3.67} 
\end{equation}
where $\cC$ is a contour beginning and ending at $+\infty$ encircling all poles of $\Gamma(c_j-\cdot)$, $j=1,2$, once in negative orientation, and the left-hand side of \eqref{3.67} is defined as the (absolutely convergent) sum of residues of the right-hand side. The exceptional case where $c_1$ and $c_2$ differ by an integer is once more treated by a limiting argument. 

\medskip

\noindent
If $c = 1 - 20 k^2 + 64 k^4$, $k\in \bbN$, then 
\begin{align}
\begin{split}
& \alpha_1\big(1 - 20 k^2 + 64 k^4\big) = \Big[3 - 4k - \sqrt{5 - 16k^2} \,\Big]\Big/2,    \\
&  \alpha_2\big(1 - 20 k^2 + 64 k^4\big) = \Big[3 - 4k + \sqrt{5 - 16k^2} \,\Big]\Big/2,    \\
& \alpha_3\big(1 - 20 k^2 + 64 k^4\big) = \Big[3 + 4k - \sqrt{5 - 16k^2} \,\Big]\Big/2,    \\
& \alpha_4\big(1 - 20 k^2 + 64 k^4\big) = \Big[3 + 4k + \sqrt{5 - 16k^2} \,\Big]\Big/2,    \\
\end{split}
\end{align}
and a fundamental system of solutions is given by,
\begin{align}
& y_1\big(\mu,x; 1 - 20 k^2 + 64 k^4\big)     \no \\
& \quad =G_{0,4}^{2,0}\left(
\!\!\begin{array}{c}\\
{\scriptstyle   \f{3-4 k-\sqrt{5-16 k^2}}{8} , \f{3+4 k-\sqrt{5-16 k^2}}{8} ; \f{3-4 k+\sqrt{5-16 k^2}}{8} , \f{3+4 k+\sqrt{5-16 k^2}}{8}} 
\end{array}
\bigg\vert\,  \f{\mu x^4}{256}\right),     \no \\
& y_2\big(\mu,x; 1 - 20 k^2 + 64 k^4\big)     \no \\
& \quad =G_{0,4}^{2,0}\left(
\!\!\begin{array}{c}\\
{\scriptstyle   \f{3-4 k+\sqrt{5-16 k^2}}{8} , \f{3+4 k+\sqrt{5-16 k^2}}{8} ; \f{3-4 k-\sqrt{5-16 k^2}}{8} , \f{3+4 k-\sqrt{5-16 k^2}}{8}} 
\end{array}
\bigg\vert\,  \f{\mu x^4}{256}\right),     \no \\
& y_3\big(\mu,x; 1 - 20 k^2 + 64 k^4\big)      \\
& \quad = x^{[(3+4k) - \sqrt{5-16k^2}\,]/2} \,_{0} F_3\left(\!\!\begin{array}{c}\\
{\scriptstyle 1 + k , 1 + k - \f{\sqrt{5 - 16 k^2}}{4},1 - \f{\sqrt{5 - 16 k^2}}{4}}
\end{array} \bigg\vert\,  \f{\mu x^4}{256} \right),     \no \\
& y_4\big(\mu,x; 1 - 20 k^2 + 64 k^4\big)     \no \\
& \quad = x^{[(3+4k) + \sqrt{5-16k^2}\,]/2} \,_{0} F_3\left(\!\!\begin{array}{c}\\
{\scriptstyle 1+k, 1 + k + \f{\sqrt{5 - 16 k^2}}{4},  1 + \f{\sqrt{5 - 16 k^2}}{4}}
\end{array} \bigg\vert\,  \f{\mu x^4}{256} \right);    \no \\
& \hspace*{7.85cm} \mu \in \bbC, \; x \in (0,\infty).   \no
\end{align}
Asymptotically,
\begin{align}
\begin{split} 
& y_1\big(\mu,x; 1 - 20 k^2 + 64 k^4\big) \underset{x \downarrow 0}{=} 
x^{[(3-4k) - \sqrt{5-16k^2}\,]/2} \ln(x) [1 + \Oh(x)],       \\
& y_2\big(\mu,x; 1 - 20 k^2 + 64 k^4\big) \underset{x \downarrow 0}{=} 
x^{[(3-4k) + \sqrt{5-16k^2}\,]/2} \ln(x) [1 + \Oh(x)].
\end{split} 
\end{align} 
\medskip
\noindent
If $c=-9/16$, then 
\begin{align}
\begin{split} 
& \alpha_1(-9/16) = \big[3 - \sqrt{10}\,\big]\big/2,    \\ 
& \alpha_2(-9/16) =\alpha_3(-9/16) = 3/2,      \\ 
& \alpha_4(-9/16) = \big[3 + \sqrt{10}\,\big]\big/2,
\end{split} 
\end{align}
and a fundamental system of solutions is given by, 
\begin{align}
\begin{split} 
y_1(\mu,x;-9/16)&=x^{[3-\sqrt{10}]/2} \,_{0} F_3\left(\!\!\begin{array}{c}\\
{\scriptstyle  1-\f{\sqrt{10}}{4}, 1-\f{\sqrt{10}}{8},1-\f{\sqrt{10}}{8}}
\end{array} \bigg\vert\,  \f{\mu x^4}{256} \right),\\
y_2(\mu,x;-9/16)&=x^{3/2} \,_{0} F_3\left(\!\!\begin{array}{c}\\
{\scriptstyle  1, 1-\f{\sqrt{10}}{8},1+\f{\sqrt{10}}{8}}
\end{array} \bigg\vert\,  \f{\mu x^4}{256} \right),\\
y_3(\mu,x;-9/16)&=G_{0,4}^{2,0}\left(
\!\!\begin{array}{c}\\
{\scriptstyle  \f{3}{8} ,\f{3}{8} ; \f{3-\sqrt{10}}{8},\f{3+\sqrt{10}}{8}} 
\end{array}
\bigg\vert\,  \f{\mu x^4}{256}
\right),\\
y_4(\mu,x;-9/16)&=x^{[3+\sqrt{10}]/2} \,_{0} F_3\left(\!\!\begin{array}{c}\\
{\scriptstyle  1+\f{\sqrt{10}}{4}, 1+\f{\sqrt{10}}{8},1+\f{\sqrt{10}}{8}}
\end{array} \bigg\vert\,  \f{\mu x^4}{256} \right);
\end{split} \\
& \hspace*{4.6cm} \mu \in \bbC, \; x \in (0,\infty).   \no 
\end{align}
Asymptotically, 
\begin{equation}  
y_3(\mu,x, -9/16) \underset{x \downarrow 0}{=} c_3 x^{3/2} \ln(x) [1 + \Oh(x)].
\end{equation} 

One observes that the case $c = -9/16$, is again precisely the borderline of semiboundedness of 
$T_{min,4}(c)$ again in accordance with \eqref{1.9} (see also \eqref{1.10}) and Remark \ref{r3.3a}.

\medskip 
\noindent
If $c=- (9/16) +10 k^2 - 16 k^4$, $k\in \bbN$, then 
\begin{align}
\begin{split}
& \alpha_1\big(-(9/16) +10 k^2 - 16 k^4\big) = (3-4k)/2,   \\
& \alpha_2\big(-(9/16) +10 k^2 - 16 k^4\big) = \Big[3 - \sqrt{10-16k^2} \,\Big]\Big/2, \\
& \alpha_3\big(-(9/16) +10 k^2 - 16 k^4\big) = \Big[3 + \sqrt{10-16k^2} \,\Big]\Big/2, \\
& \alpha_4\big(-(9/16) +10 k^2 - 16 k^4\big) = (3+4k)/2,
\end{split}
\end{align}
and a fundamental system of solutions is given by, 
\begin{align}
& y_1\big(\mu,x;-(9/16) +10 k^2 - 16 k^4\big)     \no \\
& \quad =G_{0,4}^{2,0}\left(
\!\!\begin{array}{c}\\
{\scriptstyle   \f{3-4k}{8} , \f{3+4 k}{8} ; \f{3-\sqrt{10-16 k^2}}{8} , \f{3+\sqrt{10-16 k^2}}{8}} 
\end{array}
\bigg\vert\,  \f{\mu x^4}{256} \right),   \no \\
& y_2\big(\mu,x;-(9/16) +10 k^2 - 16 k^4\big)      \no \\
& \quad =x^{[3 - \sqrt{10-16k^2}\,]/2} 
\,_{0} F_3\left(\!\!\begin{array}{c}\\
{\scriptstyle  \f{8-2\sqrt{10 - 16 k^2}}{8}, \f{8-4k-\sqrt{10 - 16 k^2}}{8}, \f{8+4k-\sqrt{10 - 16 k^2}}{8}}
\end{array} \bigg\vert\,  \f{\mu x^4}{256} \right),   \no \\
& y_3\big(\mu,x;-(9/16) +10 k^2 - 16 k^4\big)      \\
& \quad =x^{[3 + \sqrt{10-16k^2}\,]/2} 
\,_{0} F_3\left(\!\!\begin{array}{c}\\
{\scriptstyle  \f{8+2\sqrt{10 - 16 k^2}}{8}, \f{8-4k+\sqrt{10 - 16 k^2}}{8}, \f{8+4k+\sqrt{10 - 16 k^2}}{8}}
\end{array} \bigg\vert\,  \f{\mu x^4}{256} \right),     \no \\
& y_4\big(\mu,x;-(9/16) +10 k^2 - 16 k^4\big)      \no \\
& \quad =x^{(3+4k)/2} 
\,_{0} F_3\left(\!\!\begin{array}{c}\\
{\scriptstyle 1+k, \f{8+4k-\sqrt{10 - 16 k^2}}{8},  \f{8+4k+\sqrt{10 - 16 k^2}}{8}}
\end{array} \bigg\vert\,  \f{\mu x^4}{256} \right);      \no \\
& \hspace*{6.85cm} \mu \in \bbC, \; x \in (0,\infty).   \no
\end{align}
Asymptotically,   
\begin{equation} 
y_1(x) \underset{x \downarrow 0}{=} c_1 x^{(3-4k)/2} [1 + \Oh(x)] + c_2 x^{(3+4k)/2} \ln(x) [1 + \Oh(x)].
\end{equation} 

Once more, as $\mu \to 0$, the fundamental system of solutions of \eqref{3.42} considerably simplifies  to 
\begin{align}
\begin{split} 
& y_1(0,x;c) = x^{\alpha_1(c)}, \quad y_2(0,x;c) = x^{\alpha_2(c)},   \\
&y_3(0,x;c) = x^{\alpha_3(c)}, \quad y_4(0,x;c) = x^{\alpha_4(c)}; \quad c \in \bbR \backslash \{1, - 9/16\}, 
\end{split} \\
\begin{split} 
& y_1(0,x;1) = x^{[3-\sqrt{5}]/2}, \quad y_2(0,x;1) = x^{[3-\sqrt{5}]/2} \ln(x),    \\
& y_3(0,x;1) = x^{[3+\sqrt{5}]/2}, \quad y_4(0,x;1) = x^{[3+\sqrt{5}]/2} \ln(x),  \quad c=1,    
\end{split} \\
\begin{split} 
& y_1(0,x;-9/16) = x^{[3-\sqrt{10}]/2}, \quad y_3(0,x;-9/16) = x^{3/2},   \\
&  y_3(0,x;-9/16) = x^{3/2} \ln(x), \quad y_4(0,x;-9/16) = x^{[3+\sqrt{10}]/2}, \quad c=-9/16;  
\end{split} \\
& \hspace*{10.35cm} x \in (0,\infty).   \no
\end{align} 
By inspection, one verifies that $\tau_4(c) y_j(0,\dott;c) =0$, $1 \leq j \leq 4$. Alternatively, one can apply the theory of $n$th-order Euler differential equations as presented, for instance, in \cite[p.~122--123]{CL85}. 

Thus, determining whether or not $\Re(\alpha_j(c) > - 1/2$, $1 \leq j \leq 4$, one concludes that 
\begin{equation}
 \#_{L^2}\big(\tau_{4}(c)|_{(0,d)}\big) = \begin{cases}
 2, & \mbox{if}\quad c \geq 45, \\
 4, & \mbox{if}\quad - (7!!)/2^4 \leq c < 45, \\
 3, & \mbox{if}\quad c < - (7!!)/2^4.
 \end{cases}    \lb{3.58} 
\end{equation}
(Explicitly, $(7!!)/2^4 = 105/16$.)
\end{example}

Without going into further details we note that also the higher-order examples $n \in \bbN$, $n \geq 3$, can be explicitly solved in terms generalized hypergeometric functions and Meijer's $G$-function (this will be  discussed in \cite{GH23}).

\section{On the Real Part of the Roots of $D_{2n}(\dott;c)$, $c\in \bbR$}\lb{s4}

For $n\in \bbN$ and $c\in \bbR$, let $D_{2n}(\dott;c)$ be the polynomial given by \eqref{3.22} and note that all of its coefficients are real.
The goal of this section is to   determine how many  of the roots of $D_{2n}(\dott;c)$
have real part $> - 1/2$. Results of this sort are typically approached by using the Routh--Hurwitz criterion. We propose a  different  approach here, even though Hurwitz's ideas still play a central role.

Let us begin by fixing some notation. For $c\in \bbR$, let the roots of $D_{2n}(\dott;c)=0$ be denoted $\alpha_j(c)$, $j=1,\ldots,2n$.
By the continuous dependence of the roots of a polynomial on the coefficients (see 
\cite[Theorem~(1.4)]{Ma66}), we may choose our labelling such that  each $\alpha_j(c)$ is a continuous function 
of $c$ and
\begin{equation} \lb{4.1}
\Re (\alpha_1(c)) \leq  \Re (\alpha_2(c)) \leq \cdots  \leq \Re (\alpha_n(c)) 
 \leq \cdots  \leq \Re (\alpha_{2n}(c)),    
\quad c\in \bbR.
\end{equation}
Note that $\Re (\alpha_j(0)) =\alpha_j(0) =j-1$ for $j=1,\ldots,2n$. The fact that  
 \begin{equation}\label{4.1a}
\mbox{ $D_{2n}(\dott;0)$ has $2n$ distinct \emph{real} roots $>- 1/2$} 
\end{equation} 
will be of crucial importance in all that follows.
 
\begin{example} \lb{e4.1} 
Figure~\ref{fig: graph} shows the  graphs of the the real parts of the roots of $D_6(\dott;c)$ as  functions of $c\in \mathbb{R}$.
The scale for the $x$-axis has been chosen such that $x=c^{1/6}$ for $c>0$ and $x=\sgn(c)|c|^{1/6}$ for $c<0$.
The dotted lines show the graphs of the real parts of the roots of $(\dott)^6 - c=0$ as functions of $c$.
One notes that these dotted lines are straight lines precisely because of our special choice of scale for the $x$-axis.
Furthermore, as $c\to \pm \infty$, the graph of each function $\Re (\alpha_j(c))$ approaches one of these straight lines asymptotically. One observes that for $c\ll 0$, one has 
$\Re(\alpha_1(c))=\Re(\alpha_2(c))<
\Re(\alpha_3(c))=\Re(\alpha_4(c))<
\Re(\alpha_5(c))=\Re(\alpha_6(c))$.
Similarly, for $c\gg 0$, one infers that 
$\Re(\alpha_1(c))<\Re(\alpha_2(c))=
\Re(\alpha_3(c))<\Re(\alpha_4(c))=
\Re(\alpha_5(c))<\Re(\alpha_6(c))$.

As will be shown later, we have
\begin{align}
\begin{split} 
\Re(\alpha_1(c)) \leq -\f{1}{2}&\quad \mbox{iff}  \quad c\leq \f{2240 \left(214-7 \sqrt{1009}\right)}{27} \approx -693.0  \\ 
& \qquad \quad \mbox{or} \quad c\geq \f{10395}{64}\approx 162.4, \\[10pt]
\Re(\alpha_2(c)) \leq -\f{1}{2}&\quad \mbox{iff}  \quad c\leq \f{2240 \left(214-7 \sqrt{1009}\right)}{27} \approx -693.0     \\ 
& \qquad \quad \mbox{or} \quad c\geq \f{2240 \left(214+7 \sqrt{1009}\right)}{27}\approx 36201.2, 
\\[10pt]
\Re(\alpha_3(c)) \leq -\f{1}{2}&\quad \mbox{iff}  \quad c\geq \f{2240 \left(214+7 \sqrt{1009}\right)}{27}\approx 36201.2,
\end{split} 
\end{align}
where the algebraic numbers on the right are roots of the quadratic equation $27c^2 - 958720 c - 677376000 = 0$. If $j\in \{4,5,6\}$, then  $\Re(\alpha_j(c)) > - 1/2$ for all $c\in \bbR$. 
\end{example}

\begin{figure}
\begin{overpic}[scale=.77]{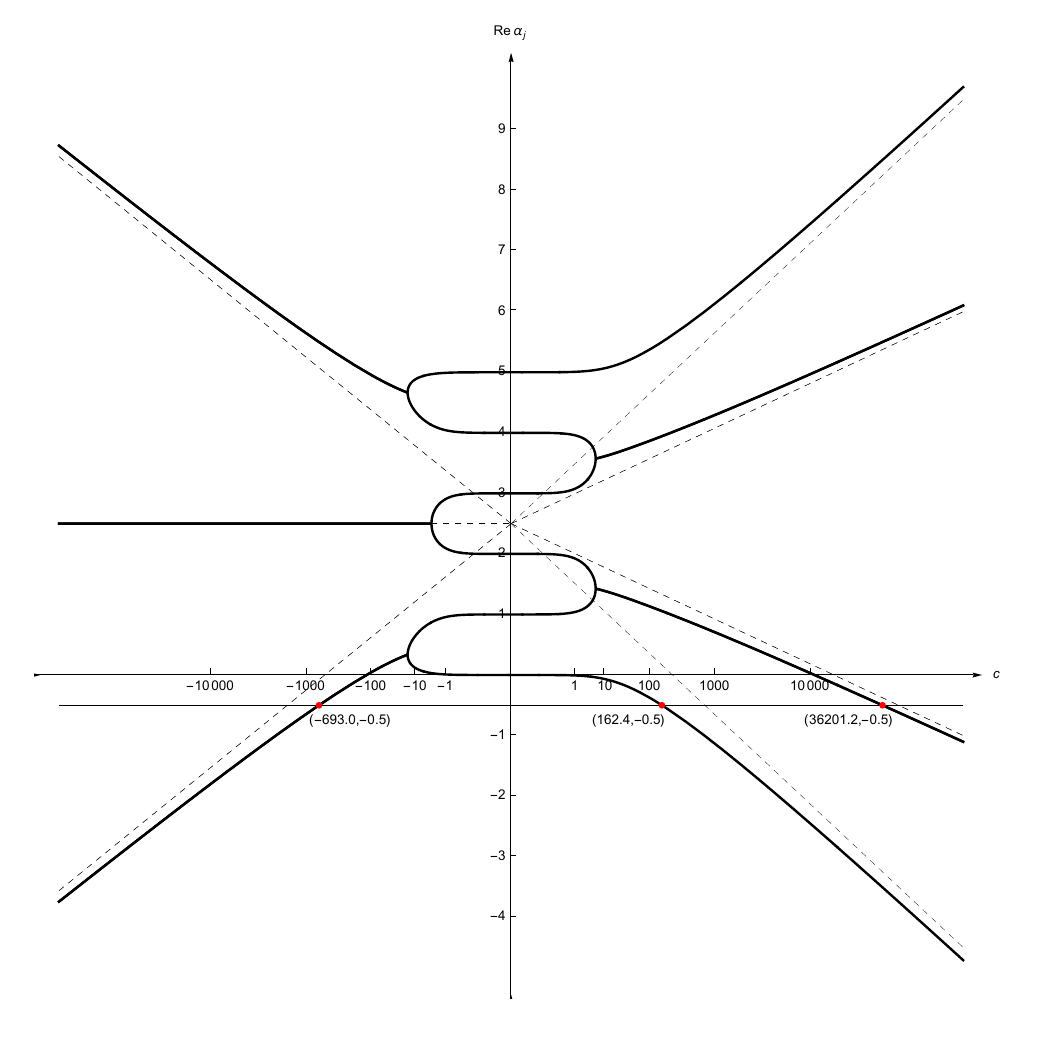}
\put(0,9){\small$\Re \alpha_1=\Re\alpha_2$}
\put(0,45){\small$\Re\alpha_3 =\Re\alpha_4 $}
\put(0,87){\small$\Re\alpha_5 =\Re\alpha_6 $}
\put(87,4){\small$\Re\alpha_1$}
\put(82,24){\small$\Re\alpha_2=\Re\alpha_3 $}
\put(82,72){\small$\Re\alpha_4=\Re\alpha_5 $}
\put(87,92){\small$\Re\alpha_6$}
\end{overpic}
\caption{Graphs of the the real parts of the roots of $D_6(\dott;c)$ as  functions of $c\in \mathbb{R}$.}
 \lb{fig: graph}
\end{figure}

 The proof of our main result, Theorem~\ref{t4.6}, concerning the real parts of the roots of $D_{2n}(\dott;c)$, $c\in \mathbb{R}$, will depend on three lemmas.
 The first lemma states that for any $c\in \mathbb{R}$, the polynomial $D_{2n}(\dott;c)$ cannot have more than two roots (counting multiplicity)
 having the same real part. More precisely, we have the following result: 

\begin{lemma}\lb{l4.2}
For $j,j'\in \{1,2,\ldots,2n\}$ and $c\in \bbR$,
\begin{equation} \lb{4.3}
\Re (\alpha_j(c)) =\Re (\alpha_{j'}(c)) \, \text{ implies } \, |j-j'|\leq 1,
\end{equation}
Furthermore, if  $\Re (\alpha_j(c)) =\Re (\alpha_{j'}(c))$ and $|j-j'|=1$, then $\alpha_j(c) ,\alpha_{j'}(c)\not\in \mathbb{R}$ and
$\overline{\alpha_j(c)}= \alpha_{j'}(c)$.
\end{lemma}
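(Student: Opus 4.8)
The plan is to bypass the Routh--Hurwitz machinery for this lemma and argue directly with the monic polynomial $P(z):=\prod_{j=0}^{2n-1}(z-j)$, for which $D_{2n}(z;c)=P(z)+(-1)^nc$; thus $\alpha$ is a root of $D_{2n}(\dott;c)$ exactly when $P(\alpha)=\lambda$ with $\lambda:=(-1)^{n+1}c\in\bbR$. The real parts of two roots coincide precisely when both roots lie on a common vertical line $\Re(z)=a$, so I would fix $a\in\bbR$, write $z=a+it$ with $t\in\bbR$, and count the real solutions of $P(a+it)=\lambda$. Everything hinges on the elementary observation
\begin{equation}
|P(a+it)|^2=\prod_{j=0}^{2n-1}\big[(a-j)^2+t^2\big],
\end{equation}
whose right-hand side is strictly increasing in $t^2$; hence $t\mapsto|P(a+it)|$ is strictly increasing on $[0,\infty)$.

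First I would bound the solutions with $t>0$. If $t_1,t_2>0$ both solve $P(a+it_\ell)=\lambda$, then $|P(a+it_1)|=|P(a+it_2)|=|\lambda|$, and strict monotonicity forces $t_1=t_2$; so there is at most one $t>0$. Since $P$ has real coefficients and $\lambda\in\bbR$, any such solution comes with its conjugate, giving a pair $a\pm it_1$ of non-real roots. Crucially, if such a $t_1>0$ exists then $|P(a)|<|P(a+it_1)|=|\lambda|$, so $P(a)\neq\lambda$ and no real root sits on the line. Thus a vertical line either carries exactly one conjugate pair $a\pm it_1$, or carries no root off the real axis, in which case the only candidate is the real point $z=a$.

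Next I would control the multiplicity at $z=a$. Because $P$ has $2n$ distinct real zeros, $P'$ has $2n-1$ simple real zeros (Rolle), so $P'$ and $P''$ have no common zero and every zero of $P'$ is real. Consequently $P(\dott)-\lambda$ has no zero of multiplicity $\geq 3$ and no non-real multiple zero; in particular the real point $a$ is a zero of multiplicity at most two and any conjugate pair consists of simple zeros. Assembling the cases, each vertical line carries at most two roots of $D_{2n}(\dott;c)$, counted with multiplicity. Reading this through the ordering \eqref{4.1}: if $\Re(\alpha_j(c))=\Re(\alpha_{j'}(c))$ with $j\leq j'$, then $\alpha_j(c),\dots,\alpha_{j'}(c)$ all share that real part, forcing $j'-j+1\leq2$, i.e.\ $|j-j'|\leq1$. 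When $|j-j'|=1$ the two equal-real-part roots are either the conjugate pair $a\pm it_1$ (so $\overline{\alpha_j(c)}=\alpha_{j'}(c)$ and both are non-real) or a real double zero at $a$ (so $\alpha_j(c)=\alpha_{j'}(c)\in\bbR$ and again $\overline{\alpha_j(c)}=\alpha_{j'}(c)$).

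The identity $\overline{\alpha_j(c)}=\alpha_{j'}(c)$ therefore holds unconditionally, and the one delicate point is the non-reality claim in the ``furthermore'' part: it can fail only in the real-double-zero alternative, which occurs exactly when $\lambda$ equals one of the finitely many critical values of $P$ (attained at its real critical points). I expect this---separating a genuine conjugate pair from a coincident real double zero, and verifying that a real zero can never share its vertical line with a conjugate pair---to be the only subtlety; the strict monotonicity of $|P(a+it)|$ in $|t|$ does essentially all of the counting work and, for this particular lemma, replaces any appeal to Grace--Heawood or Orlando-type results.
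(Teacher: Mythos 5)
Your proof is correct and takes a genuinely different route from the paper's. The paper argues by contradiction via (a corollary of) the Grace--Heawood theorem: if more than two roots (with multiplicity) shared a real part, one could pick two of them, $z_1,z_2$, with $0\leq \Im(z_1)<\Im(z_2)$; the perpendicular bisector of $[z_1,z_2]$ is then a horizontal line $\Im(z)=y_0>0$, and the closed half-plane above it would be devoid of critical points of $D_{2n}(\dott;c)$ -- contradicting Grace--Heawood, since by \eqref{4.1a} all critical points are real. You replace this with the elementary identity $|P(a+it)|^2=\prod_{j=0}^{2n-1}[(a-j)^2+t^2]$ and its strict monotonicity in $t^2$, which directly shows that a vertical line carries at most one conjugate pair and, if it does, no real root; combined with the multiplicity bounds coming from $P'$ having $2n-1$ simple real zeros (the same input \eqref{4.1a} that the paper uses, just deployed differently), this caps the count at two roots per vertical line and yields \eqref{4.3} through the ordering \eqref{4.1}. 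Your version is more self-contained (no appeal to the analytic theory of polynomials) and slightly sharper, since it cleanly separates the two alternatives -- a simple non-real conjugate pair, or a real root of multiplicity at most two -- and shows they cannot coexist on one line, which the paper's contradiction argument establishes only implicitly.

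On the ``furthermore'' clause: you are right to flag the non-reality claim. As literally stated it fails at the isolated values of $c$ where $D_{2n}(\dott;c)$ acquires a real double root, e.g.\ $c=-[(2n-1)!!]^{2}/2^{2n}$ (cf.\ Remark \ref{r3.3a}); already for $n=1$, $c=-1/4$ gives $\alpha_1=\alpha_2=1/2\in\bbR$ with $|j-j'|=1$. The paper's proof does not actually argue the ``furthermore'' part, and what is used downstream (in the proof of Lemma \ref{l4.5}) is only the conjugacy $\overline{\alpha_j(c)}=\alpha_{j'}(c)$ for non-real $\alpha_j(c)$, which your argument delivers unconditionally. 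So this is an imprecision in the lemma's statement rather than a gap in your proof.
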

\begin{proof}
Let $c\in  \mathbb{R}$ and note that
\begin{equation}\lb{4.4}
\f{d}{dz} D_{2n}(z;c) = \f{d}{dz} \left(D_{2n}(z;0)+(-1)^nc\right)=
\f{d}{dz} D_{2n}(z;0), \quad z\in \mathbb{C}.
\end{equation}
 By \eqref{4.1a}
\begin{equation} 
\text{all of the roots of the derivative of $D_{2n}(\dott;0)$ are real and simple,}   \lb{4.5a} 
\end{equation}
it follows that $D_{2n}(\dott;c)$ does not have real roots of multiplicity greater than two. Moreover, since $c\in\mathbb{R}$, all roots of $D_{2n}(\dott;c)$ are real or complex conjugates. Arguing by contradiction, suppose the polynomial $D_{2n}(\dott;c)$ has more than two roots (counting multiplicity) having the same real part. Then
\begin{align} \lb{cl4.3}
\begin{split}
&\mbox{there exist two roots $z_1,z_2\in \bbC$  of  $D_{2n}(\dott;c)$ such that}\\ 
&\quad \mbox{$\Re (z_1) =\Re(z_2)$ and $0 \leq \Im(z_1)<\Im(z_2)$.}
\end{split}
\end{align}
We now use the Grace--Heawood theorem to obtain a contradiction. More precisely, 
we use the following corollary of (the proof of) the Grace--Heawood theorem, which is stated on page 126 of \cite{RS02} as a ``Supplement'':
\begin{center}
\begin{minipage}{.9\textwidth}
\emph{If $z_1,z_2\in \bbC$ are two distinct roots of a complex polynomial of degree $\geq 2$, then neither of the two closed half-planes whose boundary is the perpendicular bisector of the line segment $[z_1,z_2]$ is devoid of any critical points of the polynomial.}
\end{minipage}
\end{center}
When applied to the two roots $z_1,z_2$ of $D_{2n}(\dott;c)$
as in the claim, this leads to a contradiction as follows.
Note that the perpendicular bisector of the line segment $[z_1,z_2]$ in our situation is of the form 
$\{z\in \bbC\mid \Im(z) = y_0\}$, where $y_0:=[\Im(z_1)+\Im(z_2)]/2>0$.
Now recall that by \eqref{4.5a} all the critical points of $D_{2n}(\dott;c)$
are real. Thus, the closed half-plane $\{z\in \bbC \,| \Im(z)\geq y_0 \}$ would be devoid of any critical points of $D_{2n}(\dott;c)$. This is the desired contradiction.
\end{proof}

The second lemma is concerned with the asymptotic behavior of the real parts of the  roots of 
$D_{2n}(\dott;c)$ as $c\rightarrow \pm\infty$.

\begin{lemma}\lb{l4.4} 
For $j\in \{1,2,\ldots,2n\}$ and $c\in \bbR$,
\begin{equation}\lb{4.5}
 \lim_{c\to + \infty} \Re (\alpha_j(c)) =
\begin{cases}
-\infty, & 1\leq j \leq n, \\ 
+ \infty, & n+1\leq j \leq 2n, 
\end{cases}
\end{equation}
and 
\begin{equation} \lb{4.6}
 \lim_{c\to -\infty} \Re (\alpha_j(c)) = 
\begin{cases}
-\infty, & 1\leq j \leq n-1,  \\
\textstyle{n-(1/2)},  & n \leq j\leq n+1,  \\
+ \infty, &n+2\leq  j \leq 2n.
\end{cases}
\end{equation}
\end{lemma}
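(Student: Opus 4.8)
The plan is to reduce both limits to the leading-order location of the roots of $D_{2n}(\dott;c)$, viewed as solutions of $P(z) = (-1)^{n+1}c$ with $P(z) := \prod_{j=1}^{2n}[z-(j-1)]$, and then to use the two symmetries at our disposal—complex conjugation (as $c \in \bbR$) and the reflection $S(z) := (2n-1) - z$, which permutes the roots by Remark \ref{r3.3a}—to control the finitely many roots that fail to diverge.

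First I would rescale by setting $z = |c|^{1/(2n)} w$. Since $P(z)/z^{2n} \to 1$ as $z \to \infty$, one has $|c|^{-1} P\big(|c|^{1/(2n)} w\big) \to w^{2n}$ uniformly on compact subsets of $\bbC$, while $|c|^{-1}(-1)^{n+1} c = s$, where $s := (-1)^{n+1}\sgn(c) \in \{\pm 1\}$. Hence the rescaled equation converges to $w^{2n} = s$, and by Hurwitz's theorem on the continuity of zeros the rescaled roots $\alpha_j(c)\big/|c|^{1/(2n)}$ converge to the $2n$ distinct $2n$th roots $\omega_1,\dots,\omega_{2n}$ of $s$. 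Consequently $\Re(\alpha_j(c)) = |c|^{1/(2n)}\big(\Re(\omega_j) + o(1)\big)$, so $\Re(\alpha_j(c)) \to +\infty$ when $\Re(\omega_j) > 0$ and $\to -\infty$ when $\Re(\omega_j) < 0$. It then remains to count the limiting directions $\omega_j$ on the imaginary axis: a short computation shows that $w = it$ with $t \in \bbR\setminus\{0\}$ satisfies $w^{2n} = s$ if and only if $s = (-1)^n$, i.e. if and only if $c < 0$, in which case $t = \pm 1$; thus for $c \to +\infty$ none of the $\omega_j$ is purely imaginary, whereas for $c \to -\infty$ exactly two are, corresponding to the directions $\pm i$.

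For $c \to +\infty$ this already yields \eqref{4.5}: every $\Re(\alpha_j(c))$ diverges, and since $\Re\big(S(\alpha)\big) = (2n-1) - \Re(\alpha)$ while $S$ reverses the ordering \eqref{4.1}, the real parts pair up as $\Re(\alpha_j(c)) + \Re(\alpha_{2n+1-j}(c)) = 2n-1$; as the right-hand side is bounded, exactly one member of each pair tends to $+\infty$ and the other to $-\infty$, forcing the $n$ smallest to $-\infty$ and the $n$ largest to $+\infty$. For $c \to -\infty$ the same pairing splits the $2n-2$ diverging roots as $n-1$ to $-\infty$ and $n-1$ to $+\infty$, and the two non-diverging roots are necessarily $\alpha_n(c)$ and $\alpha_{n+1}(c)$, with $\Re(\alpha_n(c)) + \Re(\alpha_{n+1}(c)) = 2n-1$.

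The heart of the argument—and the step I expect to be the main obstacle—is to upgrade ``bounded real part'' to the exact value $n-(1/2)$ for these two central roots, since the leading-order analysis alone only gives $\Re(\alpha_n(c)) = o\big(|c|^{1/(2n)}\big)$. Here I would combine both symmetries: for $|c|$ large $\alpha_n(c)$ is non-real (its rescaled value tends to $+i$ or $-i$), so $\overline{\alpha_n(c)}$ is a second root sharing its real part; by Lemma \ref{l4.2} no three roots share a real part and equal real parts force adjacency, and since the remaining neighbor $\alpha_{n-1}(c)$ has $\Re \to -\infty$, we must have $\overline{\alpha_n(c)} = \alpha_{n+1}(c)$. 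In particular $\Re(\alpha_n(c)) = \Re(\alpha_{n+1}(c))$, which together with the $S$-identity $\Re(\alpha_n(c)) + \Re(\alpha_{n+1}(c)) = 2n-1$ yields $\Re(\alpha_n(c)) = \Re(\alpha_{n+1}(c)) = n - (1/2)$ exactly, establishing \eqref{4.6}.
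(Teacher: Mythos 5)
Your proof is correct, but it takes a genuinely different route from the paper's. The paper centers the polynomial at $n-(1/2)$ so that the shifted $D_{2n}(z+n-(1/2);0)$ has vanishing subleading coefficient, and then runs a Rouch\'e comparison against the binomial $z^{2n}-z_0^{2n}$ on a fixed disc $U(z_0;\varepsilon)$ around each root $z_0$ of the unperturbed equation $[z-(n-(1/2))]^{2n}+(-1)^nc=0$; the vanishing of $a_{2n-1}$ is exactly what makes the difference of the two polynomials have degree $2n-2$, so that it is dominated on $\partial U(z_0;\varepsilon)$ by the degree-$(2n-1)$ lower bound for $|g(\dott;z_0)|$, yielding $|\alpha_j(c)-\beta_j(c)|<\varepsilon$ for $|c|$ large with the $\beta_j(c)$ explicit. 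Your rescaling-plus-Hurwitz argument recovers the same leading-order picture $\alpha_j(c)=|c|^{1/(2n)}(\omega+o(1))$ more softly, which settles all the divergent roots, but --- as you correctly identify --- pins the central pair down only to $o\big(|c|^{1/(2n)}\big)$; your resolution via the two symmetries (complex conjugation together with the reflection $z\mapsto(2n-1)-z$ of Remark \ref{r3.3a}, combined with Lemma \ref{l4.2} to force $\overline{\alpha_n(c)}=\alpha_{n+1}(c)$ once $\Re(\alpha_{n-1}(c))$ has drifted away) is a clean substitute for the paper's quantitative localization, and in fact yields the stronger conclusion that $\Re(\alpha_n(c))=\Re(\alpha_{n+1}(c))=n-(1/2)$ holds \emph{exactly} for all sufficiently negative $c$, not merely in the limit (consistent with the explicit formulas in Example \ref{e3.5} and with Figure \ref{fig: graph}). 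What the paper's approach buys instead is a uniform $\varepsilon$-localization of every root near the corresponding root of the binomial equation. It is worth noting that both arguments ultimately rest on the same symmetry of $D_{2n}(\dott;c)$ about $n-(1/2)$: the paper through the vanishing coefficient $a_{2n-1}=0$ that makes Rouch\'e work with fixed $\varepsilon$, you through the pairing identity $\Re(\alpha_j(c))+\Re(\alpha_{2n+1-j}(c))=2n-1$ obtained from the order-reversing involution. The only steps you should spell out in a final write-up are routine: that the \emph{sorted} real parts of the rescaled roots converge to the sorted real parts of the $2n$th roots of $s$ (so that the labelling \eqref{4.1} matches up even when distinct $\omega$'s share a real part), and that $|\Im(\alpha_n(c))|\geq(1-\varepsilon)|c|^{1/(2n)}$ for $c\ll 0$, which is what guarantees $\alpha_n(c)\notin\bbR$ before you invoke Lemma \ref{l4.2}.
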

\begin{proof}
For the purpose of this proof, let $f(\cdot)$ be the polynomial given by
\begin{equation}
f(z):=D_{2n}(z+(n-(1/2));0),\quad z\in \bbC.
\end{equation}
The half-integer $n-(1/2)$ is the center of mass of the roots of $D_{2n}(\dott;0)$ and hence 
the center of mass of the roots of $f(\dott)$ is $0$.
In other words, 
\begin{equation}\lb{4.8}
\mbox{if we write $f(z)=\sum_{j=0}^{2n}a_j z^j$, then  $a_{2n-1}=0$.}
\end{equation}
For $z_0\in \bbC$, it will be convenient to define polynomials
$f(\dott;z_0)$ and $g(\dott;z_0)$ by
\begin{equation}
f(z;z_0):=f(z)-z_0^{2n}, \quad 
g(z;z_0):=z^{2n}-z_0^{2n}, \quad z\in \bbC.
\end{equation}
One notes that if $z_0^{2n}=(-1)^{n-1}c$, then $f(z;z_0)=D_{2n}(z-(1/2);c)$ for all $z\in \bbC$. 

Next, let $\varepsilon >0$.
We claim that there exists 
a real number $R >0$ such that if $|z_0|>r$, then the polynomial $f(\cdot;z_0)$ has a unique  root in  the open disc $U(z_0;\varepsilon):=\{z\in \mathbb{C} \,|\, |z-z_0|<\varepsilon\}$. Notice that   $g(\cdot;z_0)$ has a unique root in $U(z_0;\varepsilon)$, namely $z_0$, as long as $|z_0|$ is sufficiently large. Thus, one can use Rouch\'e's theorem  as follows.
 Let $M:=\max\{|a_{2n-2}|,\ldots, |a_1|,|a_{0}|\}$. If $|z_0|\geq 1+\varepsilon$ and  $z\in \partial U(z_0;\varepsilon)$, then $1\leq  |z|\leq |z_0|+\varepsilon$
and hence (keeping in mind \eqref{4.8})
\begin{align}
\begin{split} 
|f(z;z_0)-g(z;z_0)|&=|a_{2n-2}z^{n-2}+\ldots+a_1z+a_0|\\
& \leq |a_{2n-2}| |z|^{2n-2}+ \cdots+ |a_1| |z|+|a_0| \\
&\leq M(|z|^{2n-2}+\cdots+ |z|+1) \\
& \leq (2n-1)M |z|^{2n-2}\\
& \leq (2n-1)M(|z_0|+\varepsilon)^{2n-2}.
\end{split} 
\end{align}
Furthermore, if  $|z_0|\geq 1+ \varepsilon$, then the minimum of $|g(\cdot;z_0)|$ on the boundary  $\partial U(z_0;\varepsilon)$ is attained at $z=(|z_0|- \varepsilon) z_0/|z_0|$ and hence for every 
$z\in \partial U(z_0;\varepsilon)$ one has 
\begin{align}
\begin{split}
|g(z;z_0)|&=|z^{2n}-z_0^{2n}|\geq |(|z_0|-\varepsilon)^{2n}-|z_0|^{2n}|\\
&=\varepsilon\, |(|z_0|-\varepsilon)^{2n-1}+\cdots+ (|z_0|-\varepsilon)+1|.
\end{split} 
\end{align}
One notes that if $|z_0|$ is sufficiently large, then 
\begin{equation} 
\varepsilon\, [(|z_0|-\varepsilon)^{2n-1}+\cdots+ (|z_0|-\varepsilon)+1] > (2n-1)M(|z_0|+\varepsilon)^{2n-2}
\end{equation} 
since the left-hand side is a polynomial in $|z_0|$ of degree $2n-1$ (with positive leading coefficient) and the right-hand side 
is a polynomial in $|z_0|$ of degree $2n-2$ (with positive leading coefficient.)
Therefore, if $|z_0|$ is sufficiently large, then 
\begin{equation} 
|g(z;z_0)| >  |f(z;z_0)-g(z;z_0)|\quad \mbox{for every $z\in \partial U(z_0;\varepsilon)$}
\end{equation} 
and hence, by Rouch\'e's theorem,   $f(\dott;z_0)$ and $g(\dott;z_0)$ have the same number of roots (counted with multiplicity) in  $U(z_0;\varepsilon)$.
It follows that there exists some $R>0$ such that if $|z_0|>R$, then $f(\dott;z_0)$ has a unique  root in  the open disc $U(z_0;\varepsilon)$. 

We can now complete the proof of Lemma \ref{l4.4}. For $c\in \bbR$, let the roots of   
\begin{equation}\lb{4.14}
[z-(n-(1/2))]^{2n}+(-1)^nc=0, \quad z\in \bbC,
\end{equation}
be denoted $\beta_j(c)$, $j=1,\ldots,2n$. One can choose a labelling such that  
\begin{equation}\lb{4.15}
\Re (\beta_1(c)) \leq  \Re (\beta_2(c)) \leq \cdots  \leq \Re (\beta_n(c)) 
 \leq \cdots  \leq \Re (\beta_{2n}(c)),    
\quad c\in \bbR.
\end{equation}
There is a statement  analogous to 
Lemma~\ref{l4.2}
for the roots $\beta_{j}(c)$, $j=1,\ldots,2n$. 
In light of this, there is a ``canonical'' labeling
for both the roots $\alpha_{j}(c)$ and 
$\beta_{j}(c)$ such that 
if $1\leq j<2n$ and $\Re(\alpha_j(c))= 
\Re (\alpha_{j+1}(c))$\quad [resp. $\Re(\beta_j(c))= 
\Re (\beta_{j+1}(c))$],
then $\Im (\alpha_j(c))<\Im (\alpha_{j+1}(c))$\quad [resp. $\Im (\beta_j(c))<\Im (\beta_{j+1}(c))$].
The roots of \eqref{4.14} are trivial to determine and a straightforward (but somewhat tedious) analysis shows that the asymptotic behavior of   $\Re(\beta_j(c))$
as $c\to \pm \infty$
is given by \eqref{4.5} and 
\eqref{4.6}, respectively, with $\alpha_j{(c)}$ replaced by $\beta_j{(c)}$, \quad $j=1,2\ldots, 2n$.

Now for $\varepsilon>0$ and $|c|\gg 0$, by the Rouch\'e argument from above applied to $z_0=\beta_j(c)$,
\begin{equation}
|\alpha_{j}(c) - \beta_{j}(c)|<\varepsilon, \quad j=1,2\ldots, 2n.
\end{equation}
Therefore, the asymptotic behavior of   $\Re(\beta_j(c))$
as $c\to \pm \infty$
is given by \eqref{4.5} and 
\eqref{4.6}, respectively.
\end{proof}

Finally, the last lemma is  related to the Routh--Hurwitz criterion, adapted to our situation. This takes some preparation. For $c\in  \mathbb{R}$, one first expands $D_{2n}(z-(1/2);c)$ as a polynomial in $z$,
\begin{equation}\lb{4.17}
D_{2n}( z -(1/2);c)=q_{2n} z^{2n}+ q_{2n-1} z^{2n-1} + \cdots +   q_1 z+ \big[q_0+(-1)^n c\big],
\end{equation}
and then considers the associated  $(2n \times 2n)$ Hurwitz matrix,
\begin{equation}\lb{4.18}
H_{2n}(c):=
\begin{pmatrix}
q_{2n-1}\!& \! q_{2n-3}\! &\! q_{2n-5}\! &\ \ \cdots& \ \  0\  \ &\ \ 0 \ \ & \ \ 0 \ \ &\\
q_{2n} & q_{2n-2} & q_{2n-4} &\ \  \ddots          & \vdots & \vdots & \vdots &\\
0 & q_{2n-1} & q_{2n-3} &\  \ \ddots        & \vdots & \vdots & \vdots &\\
\vdots & q_{2n} & q_{2n-2}& &               0     &\vdots  &\vdots &\\
\vdots & 0& q_{2n-1} & &                    \!\!\! \!\! \!\! q_0\!+\!(-1)^n c\!\! \!\! \!\!&\vdots & \vdots&\\
\vdots & \vdots & q_{2n} & &                    q_1  & 0  & \vdots &\\
\vdots  & \vdots  &0&    &                 q_2  &\!\! \!\! \!\! q_0\! +\! (-1)^n c\!\! \!\! \!\! &  \vdots&\\
\vdots  & \vdots   & \vdots &           & q_3  & q_1 &0 & \\[5pt]
0 & 0 & 0 &\ \ \cdots  &  q_4 &          q_2 &\!\! \!\! \!\! q_0\!+\!(-1)^n c\!\! \!\! \!\! &\\ 
\end{pmatrix}.
\end{equation}
One notes that $q_j\in \mathbb{Q}$ for all $j\in \{0,1,\ldots, 2n\}$. Furthermore, observe that $c$ only occurs in the even rows.
This implies that the function $\det\left(H_{2n}(\dott)\right)$ is a polynomial  of degree $n$ with rational coefficients.
By Laplace expansion along the last column,
\begin{equation}\lb{4.19}
\det\left(H_{2n}(c)\right)= \left[q_0+(-1)^n c\right]h_{n-1}(c),
\end{equation}
where $h_{n-1}(\dott)$ is a polynomial of degree $n-1$ with rational coefficients.
There is a simple closed expression for $q_0$, which is reminiscent of the expression on the 
right-hand side of \eqref{1.9}:
\begin{equation}\lb{4.20}
q_0=\f{(4n-1)!!}{2^{2n}}.
\end{equation}
Formula~\eqref{4.20} is easily proved by induction using that 
\begin{equation} 
q_0=D(-1/2;0)=\prod_{j=1}^{2n} [j-(1/2)].
\end{equation} 

\begin{lemma} \lb{l4.5} 
For  $j\in \{1,2,\ldots,2n\}$ and $c\in \bbR$, if 
$\Re(\alpha_{j}(c))=- 1/2$, then 
\begin{equation} 
\det(H_{2n}(c))=0, 
\end{equation} 
that is,
\begin{equation} 
c=(-1)^{n-1} q_0, \, \text{ or, } \, h_{n-1}(c)=0, 
\end{equation} 
where $h_{n-1}(\dott)$ is given by \eqref{4.19}.
\end{lemma}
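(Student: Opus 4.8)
The plan is to translate the condition $\Re(\alpha_j(c)) = -1/2$ into a statement about purely imaginary roots of the shifted polynomial in \eqref{4.17} and then read off the vanishing of the determinant directly from Orlando's formula. Write $P(z) := D_{2n}(z-(1/2);c) = \sum_{k=0}^{2n} q_k z^k$ as in \eqref{4.17}, so that $P$ is monic of degree $2n$ with constant term $P(0) = q_0 + (-1)^n c$, and its roots are exactly $\rho_\ell := \alpha_\ell(c) + (1/2)$, $1 \le \ell \le 2n$. Under this shift the hypothesis $\Re(\alpha_j(c)) = -1/2$ says precisely that $P$ has a purely imaginary root $\rho_j$ (i.e., $\Re(\rho_j) = 0$). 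Since $P$ is monic and $2n$ is even, the constant term factors as $P(0) = \prod_{\ell=1}^{2n}(-\rho_\ell) = \prod_{\ell=1}^{2n}\rho_\ell$.

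Next I would identify the two factors in \eqref{4.19} with classical Hurwitz quantities. The matrix $H_{2n}(c)$ of \eqref{4.18} is the Hurwitz matrix of $P$, and the Laplace expansion \eqref{4.19} is exactly the standard recursion $\Delta_{2n} = (\text{constant term})\cdot\Delta_{2n-1}$ for Hurwitz determinants; thus $h_{n-1}(c)$ is, up to a cofactor sign that is $+1$ here, the $(2n-1)$th Hurwitz determinant $\Delta_{2n-1}$ of $P$. The analytic engine is Orlando's formula \cite[\S~XV.7]{Ga59}, which for a monic degree-$2n$ polynomial with roots $\rho_1,\dots,\rho_{2n}$ gives
\[
\Delta_{2n-1} = \pm \prod_{1 \le i < k \le 2n} (\rho_i + \rho_k),
\]
a product over all pairwise sums of roots (the overall sign is immaterial for what follows).

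With these two identifications in hand the conclusion is immediate, and it splits exactly according to whether the purely imaginary root sits at the origin. If $\rho_j = 0$, i.e., $\alpha_j(c) = -1/2$ is a real root, then the constant term $q_0 + (-1)^n c = \prod_\ell \rho_\ell$ vanishes, so by \eqref{4.20} one has $c = (-1)^{n-1} q_0$ and $\det(H_{2n}(c)) = 0$ through the first factor of \eqref{4.19}. If instead $\rho_j = it$ with $t \ne 0$, then $P$ has real coefficients, so $\overline{\rho_j} = -it = -\rho_j$ is also a root, say $\rho_{j'}$ with $j' \ne j$; the factor $\rho_j + \rho_{j'} = 0$ then appears in Orlando's product, forcing $h_{n-1}(c) = \Delta_{2n-1} = 0$ and again $\det(H_{2n}(c)) = 0$. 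In either case the asserted dichotomy holds. Note that only this one direction (a purely imaginary root forces $\det(H_{2n}(c)) = 0$) is needed, so the full converse of Orlando's criterion is not required.

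The one place that warrants care — and the only real obstacle — is matching the paper's normalization to the textbook statement of Orlando's formula: one must check that \eqref{4.19} is genuinely the recursion $\Delta_{2n} = (\text{const})\cdot\Delta_{2n-1}$, so that $h_{n-1}$ is the penultimate Hurwitz determinant and not some other minor, and that the leading coefficient $q_{2n} = 1$ so that no stray power of the leading coefficient enters. Once that bookkeeping is settled, the separation of the degenerate case $\rho_j = 0$ from the generic case $\rho_j = it$, $t \ne 0$, is exactly what produces the two alternatives $c = (-1)^{n-1} q_0$ and $h_{n-1}(c) = 0$ in the statement.
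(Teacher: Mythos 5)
Your proposal is correct and follows essentially the same route as the paper: shift the roots by $1/2$, apply Orlando's formula to identify $h_{n-1}(c)$ with the product of pairwise sums of the shifted roots, and split into the case $\alpha_j(c)=-1/2$ (which kills the constant-term factor) versus the case of a genuinely complex root (where the conjugate pair kills a factor in Orlando's product). The only cosmetic difference is that you invoke realness of the coefficients to produce the conjugate root, whereas the paper cites its Lemma \ref{l4.2}; both suffice.
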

\begin{proof}
Note that the roots of the polynomial \eqref{4.17} are just the roots of $D_{2n}(\dott; c)$ shifted by $1/2$, that is, roots of the polynomial \eqref{4.17} are $\alpha_{j}(c)+(1/2)$, where $j\in \{1,2,\ldots,2n\}$. 
It then follows from Orlando's formula (see \cite[\S~XV.7]{Ga59}) that 
\begin{equation}\lb{4.24}
h_{n-1}(c)= \prod_{\phantom{m}1\leq j_1< j_2\leq 2n} \{[\alpha_{j_1}(c) +(1/2)]+[\alpha_{j_2}(c) +(1/2)]\}. 
\end{equation}
Next, let  $j\in \{1,2,\ldots,2n\}$ and $c\in \bbR$
such that $\Re(\alpha_{j}(c))=-1/2$. First suppose $\alpha_{j}(c)\in \bbR$. Then $\alpha_{j}(c)=-1/2$ and 
$D_{2n}(-1/2; c) = D_{2n}(-1/2; 0)+(-1)^nc=0$, which implies that $c=(-1)^{n-1}q_0$. Next suppose $\alpha_{j}(c)\not\in \bbR$.
By Lemma~\ref{l4.2}, there exists some $j'\in \{1,2,\ldots,2n\}$, $j\not=j'$, such that 
$\alpha_{j'}(c)=\overline{\alpha_j(c)}$. Then
$[\alpha_{j}(c) + (1/2)] + [\alpha_{j'}(c) + (1/2)] = 0$ and 
hence $h_{n-1}(c)=0$ by \eqref{4.24}.
\end{proof}

We now have all the necessary ingredients to prove the main result of this section, Theorem \ref{t4.6}. In this context we will use the floor and ceiling notation: One recalls that for $n\in \bbN$, $\lfloor n/2 \rfloor$ denotes the greatest integer less than or equal to $n/2$; similarly, $\lceil n/2\rceil$ denotes the least integer greater than or equal $n/2$. Thus, for $n\in \bbN$, one has 
\begin{equation}\lb{4.25}
\lceil n/2\rceil =
\begin{cases}
\lfloor n/2\rfloor + 1 =(n+1)/2 & \mbox{if $n$ is odd},\\[5pt]
\lfloor n/2\rfloor =n/2 & \mbox{if $n$ is even}.
\end{cases}
\end{equation}
Recalling Remark \ref{r3.3}\,$(i)$, one obtains for $c \in \bbR$, $d \in (0,\infty)$, 
\begin{equation}
\#\left(\tau_{2n}(c)|_{(0,d)}\right) =
\mbox{the number of $j\in \{1,2,\ldots,2n\}$ such that
$\Re(\alpha_j(c))> - 1/2$.}
\end{equation}

\begin{theorem} \lb{t4.6}
$(i)$ For every $n\in \mathbb{N}$, $n\geq 2$, there exist $n$ real constants
\begin{equation}\lb{4.27}
c_{n}^{(1)}<c_{n}^{(2)}< \cdots < c_{n}^{(n)}
\end{equation}
such that the following items $(a)$--$(c)$ hold:\\[1mm]
\begin{itemize}
\item[$(a)$] For $c\in \mathbb{R}$, $d \in (0,\infty)$, one has 
\begin{equation}
\#\left(\tau_{2n}(c)|_{(0,d)}\right)=
\begin{cases}
n,& \mbox{if}\quad c \geq c_{n}^{(n)},\\[5pt]
n+2(n-k),& \mbox{if}\quad c_{n}^{(k)}\leq c<c_{n}^{(k+1)} \ \mbox{and}\  \lfloor n/2 \rfloor < k\leq n-1,\\[5pt]
2n, & \mbox{if}\quad c_{n}^{(k)}< c<c_{n}^{(k+1)}
 \ \mbox{and}\  k=\lfloor n/2 \rfloor,\\[5pt]
n+2k+1,& \mbox{if}\quad  c_{n}^{(k)}< c \leq c_{n}^{(k+1)}  \ \mbox{and}\ 1\leq k< \lfloor n/2 \rfloor ,\\[5pt]
n+1,& \mbox{if}\quad c\leq  c_{n}^{(1)}.\\[5pt]
\end{cases}     \lb{4.28}
\end{equation}
\item[$(b)$]   The constant $c_{n}^{(\lceil n/2\rceil)}$  is  given by the formula
\begin{equation}\lb{4.29}
c_{n}^{(\lceil n/2\rceil)} = (-1)^{n-1} \f{(4n-1)!!}{2^{2n}}.
\end{equation}
 
\item[$(c)$] The constants $c_{n}^{(1)},c_{n}^{(2)},\ldots c_{n}^{(\lceil n/2 \rceil-1)},c_{n}^{(\lceil n/2 \rceil+1)}, \ldots , c_{n}^{(n)}$ are the roots of the polynomial $h_{n-1}(\dott)$
of degree $n-1$ with rational coefficients. In addition,
\begin{equation} 
c_n^{(n)} \geq \f{(4n-1)!!}{2^{2n}} \underset{n \to \infty}{=} 2^{1/2} (2/e)^n n^{2n}[1 + \Oh(1/n)].   \lb{4.30} 
\end{equation}
\end{itemize}
$(ii)$ For $n=1$ one obtains  
\begin{equation}
\#\left(\tau_{2}(c)|_{(0,d)}\right) = \begin{cases} 1, & \mbox{if}\quad c \geq 3/4, \\
2, & \mbox{if}\quad c < 3/4. 
\end{cases} 
\end{equation}
\end{theorem}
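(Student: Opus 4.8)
The plan is to translate the statement into counting, for each $c\in\bbR$, the integer $m(c):=\#\{\,j : \Re(\alpha_j(c))\leq -1/2\,\}=2n-\#\big(\tau_{2n}(c)|_{(0,d)}\big)$, and to control how $m$ changes as $c$ runs over $\bbR$. By Lemma~\ref{l4.5}, $m(c)$ can change only when $c$ is a real root of $P(c):=\det(H_{2n}(c))=[q_0+(-1)^nc]\,h_{n-1}(c)$, a polynomial of degree $n$; and at such a point Lemma~\ref{l4.2} shows that the roots realizing $\Re=-1/2$ form either a single real root (so $|\Delta m|=1$) or one conjugate pair (so $|\Delta m|=2$). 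I fix three anchor values: $m(0)=0$, since $D_{2n}(\dott;0)$ has the roots $0,1,\ldots,2n-1$; and $m(-\infty)=n-1$, $m(+\infty)=n$ by Lemma~\ref{l4.4}.

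The crucial elementary input I would isolate is a modulus estimate for the indicial polynomial $g(z):=\prod_{j=1}^{2n}[z-(j-1)]$. Since every root $\alpha$ of $D_{2n}(\dott;c)$ satisfies $g(\alpha)=(-1)^{n+1}c$, and since for $\Re(\alpha)\leq -1/2$ one has $|\alpha-(j-1)|\geq |{-1/2}-(j-1)|=j-(1/2)$ for each $j$, it follows that $|c|=|g(\alpha)|\geq \prod_{j=1}^{2n}[j-(1/2)]=q_0$, with equality only for $\alpha=-1/2$. Hence $m(c)=0$ for all $|c|<q_0$, while the only way to place a root exactly on the line $\Re=-1/2$ with $|c|=q_0$ is $\alpha=-1/2$, occurring at the unique $c=(-1)^{n-1}q_0$ (the zero of the linear factor of $P$). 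This single datum already yields formula \eqref{4.29}; it shows that the central region where $m=0$ contains $(-q_0,q_0)$ and that the single real crossing $(-1)^{n-1}q_0$ is adjacent to it; and, because $q_0$ then lies strictly inside the region $m=0$, it gives the bound $c_n^{(n)}>q_0$ of \eqref{4.30}. That the crossing at $(-1)^{n-1}q_0$ is simple and transversal follows since, by Rolle's theorem, the critical points of $D_{2n}(\dott;0)$ all lie in $(0,2n-1)$, so $-1/2$ is never a multiple root; the sign of $d\alpha/dc$ there fixes the direction of the jump.

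The engine of the proof is then a matching pair of bounds on the total variation of $m$. Passing through the anchor values forces $\operatorname{Var}(m)\geq |m(0)-m(-\infty)|+|m(+\infty)-m(0)|=(n-1)+n=2n-1$. Conversely $\operatorname{Var}(m)=\sum_{\text{crossings}}|\Delta m|$, where exactly one crossing is the single real crossing above (contributing $1$, and, by Lemma~\ref{l4.2}, disjoint from any pair crossing, since three roots cannot share the line $\Re=-1/2$), while every conjugate-pair crossing contributes $2$ and, via Orlando's formula \eqref{4.24}, a distinct real root of $h_{n-1}$; as $h_{n-1}$ has degree $n-1$ this gives $\operatorname{Var}(m)\leq 1+2(n-1)=2n-1$. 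Equality forces all $n-1$ roots of $h_{n-1}$ to be real, simple, and transversal pair crossings, and forces $m$ to be monotone on $(-\infty,0]$ and on $[0,\infty)$ with every step of size $2$ except the single step of size $1$ adjacent to the minimum. Ordering the $n$ (now provably real and simple) roots of $P$ as $c_n^{(1)}<\cdots<c_n^{(n)}$ and reading off this monotone ``$V$-shaped'' step sequence reproduces \eqref{4.28}, places the size-$1$ step at rank $\lceil n/2\rceil$ (yielding \eqref{4.29} and item~$(c)$), and the large-$n$ asymptotics in \eqref{4.30} is Stirling's formula.

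I expect the genuine obstacle to be this reality-and-count step, i.e.\ showing that $h_{n-1}$ contributes exactly $n-1$ honest crossings: a priori it could have non-real roots, real roots that meet the line $\Re=-1/2$ tangentially rather than transversally, or real roots arising from two \emph{real} roots of $D_{2n}$ summing to $-1$ rather than from a conjugate pair, and $m$ itself could oscillate. The double inequality $2n-1\leq\operatorname{Var}(m)\leq 2n-1$ is precisely what simultaneously excludes all of these pathologies and delivers monotonicity, while the modulus estimate is what anchors the exceptional single crossing and the semiboundedness-type bound. Finally, part~$(ii)$ for $n=1$ is the explicit Bessel-function computation already recorded in \eqref{3.38}.
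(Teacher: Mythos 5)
Your proposal is correct, and it reorganizes the argument around a different pivot than the paper does. The paper constructs the constants $c_n^{(k)}$ \emph{explicitly} as minima/maxima of the level sets $\{c : \Re(\alpha_j(c))=-1/2\}$ for carefully chosen indices $j$, shows via Lemma~\ref{l4.2} that these $n$ numbers are distinct, and then concludes from Lemma~\ref{l4.5} that, being $n$ distinct roots of the degree-$n$ polynomial $\det(H_{2n}(\dott))$, they exhaust all possible crossing locations; the piecewise formula \eqref{4.28} is then read off from a figure. You instead run a two-sided count on the jump variation of $m(c)=2n-\#\big(\tau_{2n}(c)|_{(0,d)}\big)$: the anchors $m(-\infty)=n-1$, $m(0)=0$, $m(+\infty)=n$ force at least $2n-1$ units of variation, while Lemma~\ref{l4.2}, Orlando's formula and $\deg h_{n-1}=n-1$ cap it at $1+2(n-1)$, and the resulting equality simultaneously delivers reality and simplicity of the roots of $h_{n-1}$, transversality of every crossing, monotonicity of $m$ on each half-line, and the exclusion of ``fake'' roots of $h_{n-1}$ coming from two real roots of $D_{2n}$ summing to $-1$ — precisely the pathologies the paper's explicit construction sidesteps by fiat. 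A further genuine difference: you replace the citation of Tallis--Gordon for $|c|\geq q_0$ at any crossing by the elementary estimate $|c|=\prod_{j=1}^{2n}|\alpha-(j-1)|\geq\prod_{j=1}^{2n}(j-\tfrac12)=q_0$ for $\Re(\alpha)\leq-1/2$, with equality only at $\alpha=-1/2$; this both anchors the unique size-one step at $(-1)^{n-1}q_0$ (hence \eqref{4.29}, after the parity count of negative versus positive crossings) and yields \eqref{4.30}. The net effect is a more self-contained and arguably cleaner proof at the cost of being less constructive about which $\alpha_j$ crosses where.

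Two details deserve explicit care in a write-up. First, ``$\operatorname{Var}(m)$'' must mean the sum of the one-sided jumps $\sum_{c_0}|m(c_0^+)-m(c_0^-)|$, not the pointwise total variation: since $m$ is only upper semicontinuous, a hypothetical tangential touch of a conjugate pair would contribute $4$ to the latter and break the upper bound $2n-1$, whereas it contributes $0$ to the jump sum (and is then excluded a posteriori by the equality); the telescoping identity needed for the lower bound holds for the jump sum. Second, the closed/open endpoint pattern in \eqref{4.28} requires the observation that at each crossing $c_0$ the roots on the line $\Re=-1/2$ are non-$L^2$, so $\#\big(\tau_{2n}(c_0)|_{(0,d)}\big)$ equals the smaller of its two one-sided limits; this is routine but should be said.
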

\begin{proof}  $(i)$ The constants $c_{n}^{(1)},\ldots, c_{n}^{(n)}$
will turn out to be the roots of the  polynomial $\det(H_{2n}(\dott))$ of degree $n$ given by \eqref{4.18}.
However, it is not clear, a priori, that $\det(H_{2n}(\dott))$ has $n$ distinct real roots.
For that reason, we will have to define our  constants differently.

Next, we recall that the polynomial $D_{2n}(\dott;0)$ has $2n$ distinct  real roots, namely the non-negative integers $\alpha_{j}(0)=j-1$, where $j\in\{1,2\ldots, 2n\}$. 
In particular,  $\Re(\alpha_{j}(0))>- 1/2$
for all $j\in\{1,2\ldots, 2n\}$.
By Lemma~\ref{l4.4}, if $1\leq j\leq n-1$, one has  $\lim_{c\to -\infty}\Re(\alpha_{j}(c))=-\infty$ and hence 
$\{c < 0 \,| \Re(\alpha_{j}(c))=- 1/2\}$ is nonempty by continuity;
similarly,  if  $1\leq j\leq n$, then $\lim_{c\to \infty}\Re(\alpha_{j}(c))=-\infty$ and hence  
$\{c > 0 \,| \Re(\alpha_{j}(c))=- 1/2\}$ is nonempty by continuity.
Now, for  $1\leq k\leq n$, define
\begin{equation}
c_{n}^{(k)}:=
\begin{cases}
\min\{c \in \bbR \,| \Re(\alpha_{n - 2k+1}(c))=- 1/2\} & \mbox{if $1\leq k\leq \lfloor n/2\rfloor$}\\[5pt]
\max\{c \in \bbR \,| \Re(\alpha_{2 (k - \lfloor n/2\rfloor) - 1 }(c))=- 1/2\} & \mbox{if $\lfloor n/2\rfloor< k\leq n$.}
\end{cases}
\end{equation}
One notes that if $1\leq k\leq \lfloor n/2\rfloor$, then $1\leq n - 2k+1 \leq n-1$
and $c_n^{(k)}<0$;
similarly, if  $\lfloor n/2\rfloor < k\leq n$, then $1\leq 2 (k - \lfloor n/2\rfloor) - 1\leq n$ and $c_n^{(k)}>0$. By \eqref{4.1}, we then obtain
\begin{equation}\lb{4.32}
c_{n}^{(1)}\leq c_{n}^{(2)}\leq  \cdots \leq c_{n}^{(\lfloor n/2\rfloor)} < 0< 
c_{n}^{(\lfloor n/2\rfloor+1)}\leq \cdots
\leq c_{n}^{(n-1)} \leq c_{n}^{(n)}
\end{equation}
Next we use Lemma~\ref{l4.2} to show that all the inequalities in 
\eqref{4.32} are strict. Suppose 
$c_n^{(k)}=c_n^{(k+1)}$ for some $1\leq k \leq \lfloor n/2\rfloor-1$. Then  $\Re(\alpha_{n-2k+1}(c_n^{(k)}))=\Re(\alpha_{n-2k-1}(c_n^{(k)}))$ and since $|(n-2k+1)-(n-2k-1)|=2>1$, this  contradicts \eqref{4.3}. The same argument also yields a contradiction 
if $c_n^{(k)}=c_n^{(k+1)}$ for some $\lfloor n/2\rfloor<  k   \leq n-1$.
Therefore,  all the inequalities in 
\eqref{4.32} are strict.

We can say a bit more about  
the constants $c_{n}^{(\lfloor n/2\rfloor)}$
and $c_{n}^{(\lfloor n/2\rfloor+1)}$.
We claim that
\begin{equation}\lb{4.33}
c_{n}^{(\lfloor n/2\rfloor)} \leq  -q_0 < 0< q_0 \leq
c_{n}^{(\lfloor n/2\rfloor+1)},
\end{equation}
where $q_0=D_{2n}(-1/2;0)$ as before.
In fact, by \cite[Theorem~1(a)]{TG71}, the equation $D_{2n}({\textstyle z-(1/2)};c)=0$ has no purely imaginary solutions  $z\in i\bbR$ if $|c|< q_0$,
which implies \eqref{4.33}. Combining \eqref{4.32} and \eqref{4.33} this also implies
\begin{equation}
c_n^{(n)} \geq q_0 = \f{(4n-1)!!}{2^{2n}} = \f{\Gamma(4n)}{2^{4n-1} \Gamma(2n)}.     \lb{4.35}
\end{equation} 
Stirling's formula (see, e.g., \cite[No.~6.1.37]{AS72}),
\begin{equation}
\Gamma(z) \underset{\substack{z \to \infty \\ |\arg(z)|< \pi}}{=} (2 \pi)^{1/2} e^{-z} z^{z - (1/2)} [1 + \Oh(1/z)],
\end{equation}
then yields \eqref{4.30}.

By Lemma~\ref{l4.5}, $\det\big(H_{2n}\big(c_n^{(k)}\big)\big)=0$ for every $1\leq k\leq n$.
Since the constants $c_n^{(k)}$ 
are distinct and since 
$\det(H_{2n}(\dott))$ is a polynomial of degree $n$,
the polynomial $\det(H_{2n}(\dott))$ does not have any other roots. Furthermore, one of the constants $c_n^{(k)}$ must by equal to $(-1)^{n-1}q_0$ and the other $n-1$ constants must be the roots of the polynomial $h_{n-1}(\dott)$. If $n$ is odd, then
$(-1)^{n-1}q_0=q_0>0$ and it follows from \eqref{4.33} and \eqref{4.32} that $c_{n}^{(\lfloor n/2\rfloor+1)}=q_0$;
similarly, if $n$ is even, then
$(-1)^{n-1}q_0=-q_0<0$ and it follows from \eqref{4.33} and \eqref{4.32} that $c_{n}^{(\lfloor n/2\rfloor)}=-q_0$.
In either case, in light of \eqref{4.25}, we have $c_{n}^{(\lceil n/2\rceil)} =(-1)^{n-1}q_0$. Thus, recalling the formula for $q_0$ from \eqref{4.20}, we obtain \eqref{4.29}. This completes the proof of parts (b) and (c) of Theorem \ref{t4.6}. 

Before we prove part (a), we recall that by the continuity argument given in the first paragraph
of this proof, for every $1\leq j\leq  n-1$, there exists some $c<0$ such that $\Re(\alpha_j(c))=-1/2$. By our observations above, this $c$ must be one of the constants $c_{n}^{(k)}$ with 
$1\leq k\leq \lfloor n/2\rfloor$. Similarly, 
for every $1\leq j\leq  n$, there exists some $c>0$ such that $\Re(\alpha_j(c))=-1/2$ and, by our observations above, this $c$ must be one of the constants $c_{n}^{(k)}$ with $\lfloor n/2\rfloor +1 \leq k\leq n$. 

We will now prove part (a) in the case when $n$ is odd.
Then $n-1$ is even and $n-1=2\lfloor n/2\rfloor=2(n-\lceil n/2 \rceil)$.
By Lemma~\ref{l4.2}
and since $n-1=2\lfloor n/2\rfloor$,  
for every $1\leq k\leq \lfloor n/2\rfloor$, there are exactly two distinct $j,j'\in \{1,2\ldots, n-1\}$ such that $\Re\big(\alpha_j\big(c_{n}^{(k)}\big)\big)=\Re\big(\alpha_{j'}\big(c_{n}^{(k)}\big)\big)$. Furthermore, 
$c_{n}^{(\lfloor n/2\rfloor+1)}=
c_{n}^{(\lceil n/2\rceil)}=q_0$ and 
$\alpha_{1}\big(c_{n}^{(\lceil n/2\rceil)}\big)=- 1/2\in \bbR$.
By Lemma~\ref{l4.2} and since 
$n-1=2(n-\lceil n/2 \rceil)$,  for every $\lceil n/2\rceil+1\leq k\leq n$ , there are  exactly two distinct $j,j'\in \{2,3\ldots, n\}$ such that $\Re\big(\alpha_j\big(c_{n}^{(k)}\big)\big)=\Re\big(\alpha_{j'}\big(c_{n}^{(k)}\big)\big)$. The resulting situation is summarized in Figure~\ref{fig: constants}(a). We now use Figure~\ref{fig: constants}(a)  to understand how the value of $\#\left(\tau_{2n}(c)|_{(0,d)}\right)$ changes with $c\in \bbR$. For $c\leq c_{n}^{(1)}$, Figure~\ref{fig: constants}(b) shows that $\Re(\alpha_{j}(c)) > - 1/2$ if and only if  $n\leq j \leq 2n$. Therefore, $\#\left(\tau_{2n}(c)|_{(0,d)}\right)=n+1$ for $c\leq c_{n}^{(1)}$. As $c$ increases beyond $c_{n}^{(1)}$, the value of $\#\left(\tau_{2n}(c)|_{(0,d)}\right)$
jumps from $n+1$ to $n+3$ since for $c_{n}^{(1)}<c\leq c_{n}^{(2)}$,
$\Re(\alpha_{j}(c)) > - 1/2$ if and only if  $n-2\leq j \leq 2n$ (assuming that $n\geq 3$).
As $c$ increases more, the value of $\#\left(\tau_{2n}(c)|_{(0,d)}\right)$ increases by $2$ each time $c$ crosses one of the constants $c_{n}^{(k)}$ until $c$ reaches $c_{n}^{(\lfloor n/2 \rfloor)})$, when the value $\#\left(\tau_{2n}(c)|_{(0,d)}\right)$ only increases by $1$ from 
$2n-1$ to $2n$. From then on, 
the value of $\#\left(\tau_{2n}(c)|_{(0,d)}\right)$ starts decreasing by $2$ each time $c$ moves beyond one of the constants $c_{n}^{(k)}$ until, finally, $c$ passes $c_{n}^{(n)}$, and we have $\#\left(\tau_{2n}(c)|_{(0,d)}\right)=n$ since for  $c\geq  c_{n}^{(n)}$,  $\Re(\alpha_{j}(c)) > - 1/2$ if and only if  $n+1\leq j \leq 2n$. The result is the piecewise-formula  for $\#\left(\tau_{2n}(c)|_{(0,d)}\right)$
stated in part (a).

In the case when $n$ is even, the argument 
is, mutatis mutandis, the same. The situation is summarized in Figure~\ref{fig: constants}(b). 
The result is the same piecewise-formula  for $\#\left(\tau_{2n}(c)|_{(0,d)}\right)$
stated in part~(a).

$(ii)$ This has been discussed in Example \ref{e3.4}. 
\end{proof}

\begin{figure}[h]
\centering
\begin{subfigure}{0.90\textwidth}
\renewcommand\captionlabelfont{}
 \begin{tikzpicture} 
 [baseline,remember picture,scale=0.80,
    implies/.style={double,double distance=.8pt,-implies},
    dot/.style={shape=circle,fill=black,minimum size=2pt,
                inner sep=0pt,outer sep=2pt},
]
  \draw[implies]        (.5,-.25)   -- (1,.25) node [below=7pt,text centered,midway]{\tiny\begin{tabular}{l}$\Re(\alpha_{n-1}(c))$\\ $\Re(\alpha_{n-2}(c))$\end{tabular}};
  \node[circle,fill=black,inner sep=0pt,minimum size=3pt] at (0.75,0) {};
  \node at (2.3,0) [below=10pt] {$\cdots$};
  \draw[implies]        (3.5,-.25)   -- (4,.25) node [below=7pt,text centered,midway]{\tiny\begin{tabular}{l}$\Re(\alpha_{2}(c))$\\ $\Re(\alpha_{1}(c))$\end{tabular}};
  \node[circle,fill=black,inner sep=0pt,minimum size=3pt] at (3.75,0) {};
  \draw[->]        (7.5,.25)   -- (8,-.25) node [below=8pt,text centered,midway]{\tiny $\Re(\alpha_{1}(c))$\ };
  \node[circle,fill=black,inner sep=0pt,minimum size=3pt] at (7.75,0) {};
  \draw[implies]        (9,.25)   -- (9.5,-.25)node [below=7pt,text centered,midway]{\quad \tiny\begin{tabular}{l}$\Re(\alpha_{3}(c))$\\ $\Re(\alpha_{2}(c))$\end{tabular}};
  \node[circle,fill=black,inner sep=0pt,minimum size=3pt] at (9.25,0) {};
  \node at (10.75,0) [below=10pt] {$\cdots$};
  \draw[implies]        (12,.25)   -- (12.5,-.25) node [below=7pt,text centered,midway]{\tiny\begin{tabular}{l}$\Re(\alpha_{n}(c))$\\ $\Re(\alpha_{n-1}(c))$\end{tabular}};
   \node[circle,fill=black,inner sep=0pt,minimum size=3pt] at (12.25,0) {};
  \draw[] (6 ,-.1) -- (6 ,.1);
   \draw[-,dashed]        (-1.5,0)   -- (13.5,0) ; 
  \draw[->]        (-1.5,1)   -- (13.5,1) node [right=0pt]{\small $c$}; 
   \draw[->]        (6,-1)   -- (6,2) node [above=0pt]{\tiny $\Re(\alpha_j(c))$};
  \draw[-] (.75,.9) -- (.75,1.1) node [above=0pt]{\small $c_{n}^{(1)}$};
  \node at (2,1.1) [above=3pt] {$\cdots$};
  \draw[-] (3.75,.9) -- (3.75,1.1) node [above=0pt]{\small $c_{n}^{(\lfloor n/2\rfloor)}$};
  \draw[-] (7.75,.9) -- (7.75,1.1) node [above=0pt]{\small $c_{n}^{(\lceil n/2\rceil)}$};
   \draw[-] (9.25,.9) -- (9.25,1.1) node [above=0pt]{\small \quad $c_{n}^{(\lceil n/2\rceil+1)}$};
    \node at (11,1.1) [above=3pt] {$\cdots$};
   \draw[-] (12.25,.9) -- (12.25,1.1) node [above=0pt]{\small $c_{n}^{(n)}$};
\end{tikzpicture}
\caption{$n$ odd}
\end{subfigure}
\par\bigskip\medskip
\begin{subfigure}{0.90\textwidth}\lb{fig: n even-1}
\renewcommand\captionlabelfont{}
 \begin{tikzpicture}[baseline,remember picture,scale=0.8,
    implies/.style={double,double distance=.8pt,-implies},
    dot/.style={shape=circle,fill=black,minimum size=2pt,
                inner sep=0pt,outer sep=2pt},
]
  \draw[implies]        (-.5,-.25)   -- (0,.25) node [below=7pt,text centered,midway]{\tiny\begin{tabular}{l}$\Re(\alpha_{n-1}(c))$\\ $\Re(\alpha_{n-2}(c))$\end{tabular}};
  \node[circle,fill=black,inner sep=0pt,minimum size=3pt] at (-0.25,0) {};
  \node at (1.4,0) [below=10pt] {$\cdots$};
  \draw[implies]        (2.5,-.25)   -- (3,.25) node [below=7pt,text centered,midway]{\tiny\begin{tabular}{l}$\Re(\alpha_{3}(c))$\\ $\Re(\alpha_{2}(c))$\end{tabular}\ };
   \node[circle,fill=black,inner sep=0pt,minimum size=3pt] at (2.75,0) {};
  \draw[->]        (4,-.25)   -- (4.5,.25) node [below=7pt,text centered,midway]{\tiny\quad $\Re(\alpha_{1}(c))$};
  \node[circle,fill=black,inner sep=0pt,minimum size=3pt] at (4.25,0) {};
  \draw[implies]        (8,.25)   -- (8.5,-.25)node [below=7pt,text centered,midway]{\tiny\begin{tabular}{l}$\Re(\alpha_{2}(c))$\\ $\Re(\alpha_{1}(c))$\end{tabular}\ };
  \node[circle,fill=black,inner sep=0pt,minimum size=3pt] at (8.25,0) {};
  \draw[implies]        (9.5,.25)   -- (10,-.25) node [below=7pt,text centered,midway]{\tiny\quad \begin{tabular}{l}$\Re(\alpha_{3}(c))$\\ $\Re(\alpha_{2}(c))$\end{tabular}};
  \node[circle,fill=black,inner sep=0pt,minimum size=3pt] at (9.75,0) {};
  \node at (11.25,0) [below=10pt] {$\cdots$};
  \draw[implies]        (12.5,.25)   -- (13,-.25)node [below=7pt,text centered,midway]{\tiny\begin{tabular}{l}$\Re(\alpha_{n}(c))$\\ $\Re(\alpha_{n-1}(c))$\end{tabular}};
  \node[circle,fill=black,inner sep=0pt,minimum size=3pt] at (12.75,0) {};
  \draw[] (6,-.1) -- (6,.1);
     \draw[-,dashed]        (-1.5,0)   -- (13.5,0) ; 
  \draw[->]        (-1.5,1)   -- (13.5,1) node [right=0pt]{\small $c$}; 
  \draw[->]        (6,-1)   -- (6,2) node [above=0pt]
  {\tiny$\Re(\alpha_j(c))$};
  \draw[-] (-.25,.9) -- (-.25,1.1) node [above=0pt]{\small $c_{n}^{(1)}$};
  \node at (1.1,1.1) [above=3pt] {$\cdots$};
    \draw[-] (2.75,1.1) -- (2.75,1.1) node [above=0pt]{\small $c_{n}^{(\lfloor n/2\rfloor-1)}$};
  \draw[-] (4.25,.9) -- (4.25,1.1) node [above=0pt]{\small $c_{n}^{(\lfloor n/2\rfloor)}$};
  \draw[-] (8.25,.9) -- (8.25,1.1) node [above=0pt]{\small $c_{n}^{(\lfloor n/2\rfloor+1)}\ \ $};
  \draw[-] (9.75,.9) -- (9.75,1.1) node [above=0pt]{\small \quad $c_{n}^{(\lfloor n/2\rfloor+2)}$};
  \node at (11.5,1.1) [above=3pt] {$\cdots$};
   \draw[-] (12.75,.9) -- (12.75,1.1) node [above=0pt]{\small $c_{n}^{(n)}$};
\end{tikzpicture}
\caption{$n$ even}
 \lb{fig: n even-2}
 \end{subfigure}
\caption{The constants $c_{n}^{(k)}$}\lb{fig: constants}
\end{figure}
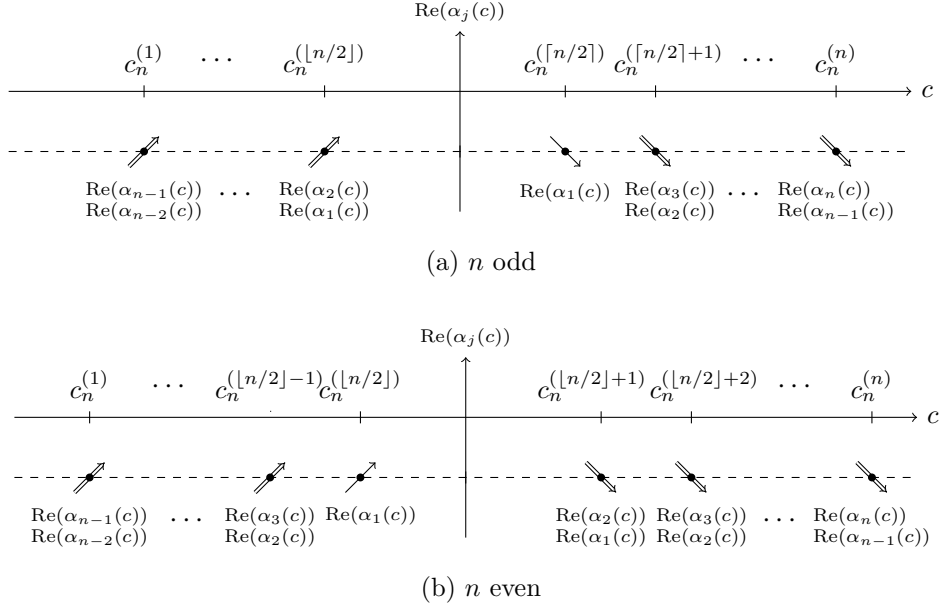 

\begin{corollary} \lb{c4.7}
For every $n\in \mathbb{N}$, there exists a positive constant $c_n\in \bbR$  such that  
\begin{equation}
\big\{c\in \mathbb{R} \,\big|\, \#\left(\tau_{2n}(c)|_{(0,d)}\right) =n \big\}=[c_n,\infty),
\end{equation}
and thus, 
\begin{align}
\begin{split} 
& \text{$T_{2n, min}(c)$ is self-adjoint $\big($equivalently, $\ddot T_{2n, min}$ is essentially 
self-adjoint\,$\big)$}     \\
& \quad \text{in $L^2((0,\infty);dx)$ if and only if \ $c \geq c_n$.}
\end{split}
\end{align} 
In addition, 
\begin{equation} 
c_1 = 3/4, \quad c_n = c_n^{(n)} \geq \f{(4n-1)!!}{2^{2n}}, \; n \in \bbN, \; n \geq 2  
\end{equation} 
$($see \eqref{4.27}, \eqref{4.28}, and \eqref{4.35}$)$. 
\end{corollary}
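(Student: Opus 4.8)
The plan is to read the corollary directly off Theorem~\ref{t4.6}, combining it with the self-adjointness criterion established in Theorem~\ref{t3.2}. The key observation is that in the piecewise formula \eqref{4.28} the value $n$ is attained \emph{only} on the uppermost interval $[c_n^{(n)},\infty)$, so that the set where $\#(\tau_{2n}(c)|_{(0,d)})=n$ is precisely a half-line of the desired form.

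First I would treat the base case $n=1$: by Theorem~\ref{t4.6}\,$(ii)$ (equivalently, Example~\ref{e3.4}), one has $\#(\tau_2(c)|_{(0,d)})=1$ if and only if $c\geq 3/4$, so I set $c_1:=3/4$. For $n\geq 2$ I would set $c_n:=c_n^{(n)}$ and verify the set equality by inspecting each branch of \eqref{4.28} in turn. The top branch already gives $\#=n$ for $c\geq c_n^{(n)}$; in every remaining branch the value strictly exceeds $n$. Indeed, for $\lfloor n/2\rfloor<k\leq n-1$ one has $n+2(n-k)\geq n+2$ (the minimum occurring at $k=n-1$); for $k=\lfloor n/2\rfloor$ one has $2n>n$; for $1\leq k<\lfloor n/2\rfloor$ one has $n+2k+1\geq n+3$; and on $(-\infty,c_n^{(1)}]$ one has $n+1>n$. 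Hence $\#(\tau_{2n}(c)|_{(0,d)})=n$ holds if and only if $c\geq c_n^{(n)}$, giving $\{c\in\bbR \mid \#(\tau_{2n}(c)|_{(0,d)})=n\}=[c_n^{(n)},\infty)$ with $c_n=c_n^{(n)}$.

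Positivity of $c_n$ follows immediately: $c_1=3/4>0$, while for $n\geq 2$ the lower bound \eqref{4.35} yields $c_n=c_n^{(n)}\geq (4n-1)!!/2^{2n}>0$. Finally, I would invoke Theorem~\ref{t3.2}, according to which $T_{2n,min}(c)$ is self-adjoint (equivalently, $\ddot T_{2n,min}$ is essentially self-adjoint) in $L^2((0,\infty);dx)$ if and only if $\#(\tau_{2n}(c)|_{(0,d)})=n$; this converts the half-line description into the asserted equivalence $c\geq c_n$.

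Since the corollary is essentially a repackaging of Theorem~\ref{t4.6}\,$(a)$, there is no substantive obstacle. The only point requiring (minor) care is confirming the \emph{strict} inequalities in the case analysis above, that is, checking that none of the intermediate branches of \eqref{4.28} can inadvertently return the value $n$; this is precisely what pins the solution set down to the single half-line $[c_n^{(n)},\infty)$ rather than a larger union of intervals.
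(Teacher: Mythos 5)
Your proposal is correct and follows exactly the route the paper intends: Corollary \ref{c4.7} is stated without a separate proof precisely because it is the immediate combination of the piecewise formula \eqref{4.28} in Theorem \ref{t4.6} (where the value $n$ occurs only on the top branch $[c_n^{(n)},\infty)$, all other branches giving values $\geq n+1$), the lower bound \eqref{4.35} for positivity, and the self-adjointness criterion of Theorem \ref{t3.2}. Your careful check that none of the intermediate branches can return the value $n$ is the right (and only) point needing verification, and it is done correctly.
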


Put differently, Corollary \ref{c4.7} asserts there exist no ``islands'' $($i.e., intervals or its degeneration to points$)$ of non-essential self-adjointness for $\tau_{2n}(c)\big|_{C_0^{\infty}((0,\infty))}$ for $c \geq c_n$.

We explicitly record the following exact expressions: 
\begin{align}
c_{1} &= 3/4, \no \\
c_{2 }&= 45,  \no \\
c_{3 }&= 2240 \big(214+7 \sqrt{1009}\big)\big/27 \approx 36201.1645283357,   \no \\
c_{4 }&=2835 \Bigg( 13711+\f{190309441}{\sqrt[3]{2625188010911+1805760  
   \sqrt{-292868607}}}    \no \\
  &\quad\quad +  \sqrt[3]{2625188010911+1805760   \sqrt{-292868607}}\ \Bigg)   \\
  \begin{split} 
 &=38870685+5670 \sqrt{\f{292868607}{127}} \sin \left(\f{1}{3} \tan ^{-1}\left(\f{9
   \sqrt{292868607}}{466120}\right)\right)     \no \\
   &\quad\quad +\f{876128400}{\sqrt{127}} {\cos \left(\f{1}{3} \tan
   ^{-1}\left(\f{9 \sqrt{292868607}}{466120}\right)\right)}\\
 &\approx 117089256.9368802.     \no 
 \end{split} 
\end{align}

\begin{corollary} \lb{c4.8} 
For every  $n\in \mathbb{N}$ and every  $m\in \{n,n+1,\cdots,2n\}$, there exists some  $c\in \mathbb{R}$ such that $\#\left(\tau_{2n}(c)|_{(0,d)}\right)=m$.
\end{corollary}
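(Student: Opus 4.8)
The plan is to derive the statement directly from the explicit piecewise formula \eqref{4.28} for $\#\left(\tau_{2n}(c)|_{(0,d)}\right)$ proved in Theorem~\ref{t4.6}. For $n=1$ both admissible values $\{1,2\}=\{n,2n\}$ are realized by part~(ii) of that theorem, so I would at once reduce to $n\geq 2$ and work entirely from \eqref{4.28}, viewing $\#\left(\tau_{2n}(c)|_{(0,d)}\right)$ as a function of $c\in\bbR$ that is constant on each of the intervals cut out by the $n$ distinct points $c_n^{(1)}<\cdots<c_n^{(n)}$ of \eqref{4.27}.

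First I would read off from \eqref{4.28} the full list of values this function assumes. On the two outer intervals it equals $n+1$ (for $c\leq c_n^{(1)}$) and $n$ (for $c\geq c_n^{(n)}$); on the central interval containing $c=0$ it equals the maximum $2n$; and on the remaining intervals it takes the values $n+2k+1$ for $1\leq k<\lfloor n/2\rfloor$ (the ``ascending branch'') and $n+2(n-k)$ for $\lfloor n/2\rfloor<k\leq n-1$ (the ``descending branch''). Letting $k$ range over its allowed values, the ascending branch together with the two values $n+1$ and $2n$ produces exactly $\{n+1,n+3,\ldots,2n\}$ when $n$ is odd, and $\{n+1,n+3,\ldots,2n-1\}\cup\{2n\}$ when $n$ is even; the descending branch together with the value $n$ produces exactly $\{n,n+2,\ldots,2n-1\}$ when $n$ is odd, and $\{n,n+2,\ldots,2n-2\}$ when $n$ is even.

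The key observation I would then isolate is that the two branches carry complementary parities, so that in either parity of $n$ their union is precisely $\{n,n+1,\ldots,2n\}$. Consequently, for each $m\in\{n,n+1,\ldots,2n\}$ there is an interval on which $\#\left(\tau_{2n}(c)|_{(0,d)}\right)=m$, and any $c$ in that interval furnishes the required value; this proves the corollary. I do not anticipate a real obstacle here, as the statement is an immediate consequence of Theorem~\ref{t4.6}; the only point demanding care is the bookkeeping in the parity argument --- checking that the peak value $2n$ and the two extremes $n$ and $n+1$ are all attained and that the ascending and descending value-sets interleave without gaps or overlaps --- which I would confirm by the same case split on the parity of $n$ already used in the proof of Theorem~\ref{t4.6}, and verify against the small cases $n=2,3$.
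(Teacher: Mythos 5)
Your proposal is correct and is essentially the paper's own argument: the paper likewise reads the attained values off the piecewise formula \eqref{4.28} of Theorem \ref{t4.6}, listing them as $n+1,n+3,\ldots,2n,\ldots,n+2,n$ as $c$ increases and observing that the ascending and descending branches carry complementary parities, so their union is all of $\{n,\ldots,2n\}$. Your parity bookkeeping (including the degenerate cases $n=2,3$ where one or both branches are empty) checks out.
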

\begin{proof}
By Theorem \ref{t4.6}, as $c$ increases from $c\ll0$ to $c\gg 0$,
$\#\left(\tau_{2n}(c)|_{(0,d)}\right)$ takes on the values
\begin{equation}
n+1,n+3, \ldots ,2n-2, 2n, 2n-1, 2n-3, \ldots , n+2,n, \quad \mbox{if $n$ is odd,} 
\end{equation} 
and 
\begin{equation}
n+1,n+3, \ldots , 2n-3, 2n-1, 2n, 2n-2, \ldots , n+2,n,
\quad \mbox{if $n$ is even}.
\end{equation}
In either case, $\#\left(\tau_{2n}(c)|_{(0,d)}\right)$
takes on all integer values from $n$ to $2n$.
\end{proof}

In particular, Corollary \ref{c4.8} proves that every possible integer in the interval $[n,2n]$ in \eqref{3.23} is attained for some $c \in \bbR$. 

\begin{example} \lb{e4.9}
If $n=3$, then  $q_0=10395/64$ and 
\begin{align}
h_2(c)&=\left|
\begin{array}{ccccc}
 18 & 435 & {4881}/{8} & 0 & 0\\[5pt]
 -1 & -{505}/{4} & -{12139}/{16} & c-10395/64 & 0 \\[5pt]
 0 & 18 & 435 & {4881}/{8} & 0 \\[5pt]
 0 & -1 & -505/4 & -12139/16 & c-10395/64 \\[5pt]
 0 & 0 & 18 & 435 & 4881/8 \\
\end{array}
\right| \no \\[1pc]  
& = -5832 c^2+207083520 c+146313216000, \quad c \in \bbR.
\end{align}
The roots of $h_2(\dott)$ are $2240\left(214\pm 7 \sqrt{1009}\right)/27$.
Therefore, by Theorem \ref{t4.6} one finds
\begin{equation}
\#\left(\tau_{6}(c)|_{(0,d)}\right)=
\begin{cases}
3,& \mbox{if}\quad  2240 \left(214+7 \sqrt{1009}\right)/27\leq c;\\[5pt]
5,& \mbox{if}\quad  10395/64\leq c<  2240\left(214+7 \sqrt{1009}\right)/27;\\[5pt]
6,& \mbox{if}\quad 2240\left(214-7 \sqrt{1009}\right)/27< c<10395/64;\\[5pt]
4,& \mbox{if}\quad c\leq 2240\left(214-7 \sqrt{1009}\right)/27. 
\end{cases}
\end{equation} 
 \end{example}

\appendix 
\section{Some Conjectures} \lb{sA}

In this section, when dealing with polynomials, we will view them as elements in a polynomial ring as in abstract algebra. We will review some standard notational conventions and basic results.
Let $X$ be an indeterminate (formal symbol).  We denote by $\bbZ[X]$ (resp. $\bbQ[X]$)  the ring of polynomials in the indeterminate $X$ with coefficients in $\bbZ$ (resp. $\bbQ$). A polynomial
$f(X)\in \bbQ[X]$ is called irreducible, if it has postive degree   and it cannot be written as 
a product $f(X)=g(X)h(X)$,  where $g(X),h(X)\in \bbQ[X]$ are polynomials of degree strictly less than the degree of $f(X)$.
  
\begin{conjecture}\lb{coA.1}
For $n\in \mathbb{N}$, $n\geq 2$, the polynomial
\begin{equation}
g_{n-1}(X) := \f{(-1)^{\lfloor n/2 \rfloor}}{{(2 n^2)}^{n}}\, h_{n-1}(X) 
\end{equation}
is a monic irreducible polynomial in  $\mathbb{Q}[X]$ of degree $n-1$ with Galois group $S_{n-1}$. In particular, for $n\geq 6$, the constants $c_{n}^{(1)},c_{n}^{(2)},\ldots c_{n}^{(\lceil n/2 \rceil-1},c_{n}^{(\lceil n/2 \rceil+1)}, \ldots , c_{n}^{(n)}$ are algebraic numbers that are not expressible in radicals over $\mathbb{Q}$.
\end{conjecture}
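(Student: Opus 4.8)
The plan is to treat the three assertions of Conjecture~\ref{coA.1}---that $g_{n-1}$ is (i) monic of degree $n-1$ with rational coefficients, (ii) irreducible over $\bbQ$, and (iii) has Galois group $S_{n-1}$---separately, and then to deduce the non-solvability statement from (iii). Assertion (i) is the only one I expect to be provable uniformly in $n$ with the tools at hand. By \eqref{4.19}, $h_{n-1}(\dott)$ is a polynomial of degree $n-1$ with rational coefficients, so $g_{n-1}\in\bbQ[X]$ has degree $n-1$. To see that the normalizing constant $(-1)^{\lfloor n/2\rfloor}(2n^2)^{n}$ renders $g_{n-1}$ monic, I would compute the leading coefficient of $h_{n-1}$ in $c$, either by extracting the coefficient of $c^{n}$ from the Laplace expansion \eqref{4.19} of $\det(H_{2n}(c))$ (recalling that $c$ occurs only in the $n$ even rows of $H_{2n}(c)$), or from the Orlando product \eqref{4.24} together with the $c\to\pm\infty$ root asymptotics of Lemma~\ref{l4.4}; the outcome matches the value verified in Example~\ref{e4.9} for $n=3$. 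This step is routine, if tedious, and I will not grind it here.

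The substantive content is (ii) and (iii), and here I would proceed by an explicit computational Galois-theoretic argument. First, replacing $X$ by $X/N$ and clearing denominators for a suitable $N\in\bbN$, I would pass to a monic polynomial $\widetilde g_{n-1}\in\bbZ[X]$ with the same splitting field, hence the same Galois group $G\le S_{n-1}$ over $\bbQ$; irreducibility of $g_{n-1}$ is then equivalent to transitivity of $G$. For the group determination I would invoke Dedekind's theorem: for each prime $p$ not dividing the discriminant of $\widetilde g_{n-1}$, the degrees of the irreducible factors of $\widetilde g_{n-1}\bmod p$ give the cycle type of a Frobenius element of $G$. The concrete targets are a prime at which $\widetilde g_{n-1}$ is irreducible (forcing transitivity, hence (ii)) and a prime at which it factors as one irreducible quadratic times linear factors (producing a transposition in $G$). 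For $n=6$, $\deg g_{5}=5$ is prime, so a transitive subgroup of $S_5$ is primitive and a primitive group containing a transposition equals $S_5$ by Jordan's theorem. For $n=7$, $\deg g_6=6$, and I would additionally locate a prime giving a factorization of type $5+1$ (an irreducible quintic times a linear factor), i.e.\ a $5$-cycle in $G$; a transitive group of degree $6$ containing a $5$-cycle is primitive, so a transposition once more upgrades it to $S_6$ via Jordan. Thus for $n=6,7$ one obtains $G=S_{n-1}$.

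With $G=S_{n-1}$ in hand, the non-solvability conclusion is immediate from Galois theory: the roots of $g_{n-1}$ are expressible in radicals over $\bbQ$ if and only if $G$ is solvable, and $S_{m}$ is not solvable for $m\ge5$ since it contains the simple non-abelian group $A_{m}$. As $n\ge6$ gives $m=n-1\ge5$, the roots of $g_{n-1}$ are not expressible in radicals; by Theorem~\ref{t4.6}$(c)$ these roots are precisely the constants $c_{n}^{(1)},\dots,c_{n}^{(\lceil n/2\rceil-1)},c_{n}^{(\lceil n/2\rceil+1)},\dots,c_{n}^{(n)}$, which yields the stated claim, rigorously for $n=6,7$.

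The main obstacle is exactly the step that keeps Conjecture~\ref{coA.1} a conjecture for general $n$: there is no known argument, uniform in $n$, that forces $G=S_{n-1}$. For each fixed $n$ the Dedekind-plus-Jordan strategy terminates in a finite search for suitable primes and is therefore decidable (this is how $n=6,7$ are settled), but to run it for all $n$ one would need to control the factorization types of $\widetilde g_{n-1}\bmod p$---equivalently the splitting behavior of primes and the non-squareness of the discriminant---simultaneously across the whole family $\{g_{n-1}\}_{n\ge2}$. Even irreducibility over $\bbQ$ for all $n$ is not obviously accessible, since one cannot expect a single Eisenstein-type prime to work throughout the family. I would regard establishing that the entire family attains the generic Galois group $S_{n-1}$ as the genuinely hard part, likely approachable only through finer arithmetic information about the coefficients of $h_{n-1}$ than is presently available.
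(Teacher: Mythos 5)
Your proposal follows essentially the same route as the paper: rescale $g_{n-1}$ to a monic integer polynomial, reduce modulo suitable primes, read off cycle types of Frobenius elements via Dedekind's theorem (an irreducible reduction for transitivity, a quadratic-times-linears factorization for a transposition, and for $n=7$ an additional $5+1$ pattern), and conclude $G=S_{n-1}$ by the Jordan primitivity argument, whence non-solvability for $n-1\geq 5$; the paper carries out exactly these computations for $n=5,6,7$ (with the relevant primes $19$; $23,109$; and $37,43,89$, respectively) and tabulates the primes up to $n=12$. You also correctly identify that no uniform-in-$n$ argument is available, which is precisely why the statement remains a conjecture in the paper.
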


\bigskip

\noindent
{\it Proof for $n=5$}.
We  have
\begin{equation}
\begin{split}
g_4(X)=\ &X^4-5237598744576 X^3/5-3477424021724410819117056X^2/3125\\
&+2933863158888223380395161288704X/125\\
&+246639641224100448713004224731938816/55  .
\end{split}
\end{equation}   
Let $\widetilde{g}_4(X):=(3125)^4\, g_4(X/3125)$. Then $\widetilde{g}_4(X)$ is a monic polynomial of degree $4$ with integer coefficients. Reducing the coefficient of modulo~19, one obtains
\begin{equation}
\widetilde{g}_4(X)\equiv X^4+11 X^3+3 X^2+11 X+15\ \operatorname{mod} 19.
\end{equation}
It is easy to check that  $X^4+11 X^3+3 X^2+11X+15$ is irreducible modulo~19.
By Gauss' lemma, it follows that $g_4(X)$ is irreducible over $\bbQ$. \hfill $\Box$

\bigskip
\noindent
{\it Proof for $n=6$}. We  have
\begin{equation}
\begin{split}
g_5(X)=\ &X^5-15354318108567042605X^4/729\\[5pt]
&-333441081709503846926848000000X^3/3\\[5pt]
& +4983404391409567436628431599042560000000 X^2\\[5pt]
&+8770826733513986444066497798757941248000000000000X\\[5pt]
&-2088913117666248881257824386993081779822264320000000000000\!\!\!\!\!\!\!\!\!\!\!\!
\end{split}
\end{equation}
Let $\widetilde{g}_5(X):=(729)^5\, g_5(X/729)$. Then $\widetilde{g}_5(X)$ is a monic polynomial of degree $5$ with integer coefficients.
Note that $g_5(X)$ is irreducible over $\mathbb{Q}$ if and only if $\widetilde{g}_5(X)$ is irreducible over $\mathbb{Q}$.
Furthermore, the Galois group of $g_5(X)$ is  isomorphic to the Galois group of $\widetilde{g}_5(X)$.
To prove the irreducibility and to compute the Galois group, we reduce the coefficients of $\widetilde{g}_5(X)$ modulo the primes $23$ and $109$:
\begin{align}
 \widetilde{g}_5(X)&\equiv  X^5+5X^4+11 X^3+7 X^2+13X+16\ \operatorname{mod} 23,\\
  \widetilde{g}_5(X)&\equiv (X^2+38 X+24)(X+42)(X+41)(x+11)\ \operatorname{mod} 109.
\end{align}
It is easy to check that  $X^5+5 X^4+11 X^3+7 X^2+13X+16$ is irreducible modulo~23. Therefore, the polynomial   $\widetilde{g}_5(X)$
is irreducible over $\mathbb{Z}$ and also over $\mathbb{Q}$ by Gauss' lemma.
It also follows, by a theorem due to Dedekind  (see \cite[Thm.~4.37]{Ja85}), that the Galois group of the polynomial $\widetilde{g}_5(X)$ contains a 5-cycle.
Since  the reduction of $\widetilde{g}_5(X)$ modulo 109 is the product an irreducible quadratic polynomial and three linear polynomials,   Dedekind's theorem implies that the Galois group of  $\widetilde{g}_5(X)$ contains a transposition (2-cycle). 
A subgroup of $S_5$ that contains a transposition and a $5$-cycle is $S_5$. Since $S_5$ is not a solvable group,
Galois' theorem then implies  that $g_5(X)$ is not solvable and hence $c_{6}$ cannot be written in terms of radicals.  
\hfill $\Box$

\bigskip

\noindent
{\it Proof for $n=7$}. The least common denominator of the coefficients of $g_6(X)$ turns out to be $823543$. Let  $\widetilde{g}_6(X):=(823543)^6\, g_6(X/823543)$.
Then  $\widetilde{p}_6(X)$ is a monic polynomial   with integer coefficients and factorizations of $\widetilde{g}_6(X)$ modulo the primes 
$37$, $43$, and $89$ are
\begin{align}
 \widetilde{g}_6(X)&\equiv  (X^6+4 X^5+25X^4+20X^3+16 X^2+34 X+8)\ \operatorname{mod} 37,\\
 \widetilde{g}_6(X)&\equiv (X^2+15 X+5)(X+27) (X+20) (X+19) (X+9)\ \operatorname{mod} 43,\\
 \widetilde{g}_6(X)&\equiv (X^5+46 X^4+4 X^3+23 X^2+46 X+50)(X+30) \ \operatorname{mod} 89.
\end{align}
The factorization $\widetilde{g}_6(X)$ modulo $37$ reveals that $\widetilde{g}_6(X)$ is  irreducible over $\mathbb{Q}$.
\hfill $\Box$

\bigskip

The same idea can be used to prove the conjecture for larger $n$. The following table shows 
what primes are used to verify the conjecture for $4\leq n\leq 12$.
 
 \begin{table}[h]
\begin{tabular}{cccc}
\toprule 
$n$ & \multicolumn{3}{l}{Smallest prime needed to prove existence of}\\
  & $(n-1)$-cycle & $(n-2)$-cycle & $2$-cycle\\
\midrule
4 & 23 & 13 & 13\\
5 & 19 & 17 & 71\\
6 & 23 & 47 & 109\\
7 & 37 & 89 & 43\\
8 & 67 & 29 & 8089\\
9 & 179 & 47 & 7639\\
10 & 43 & 167 & 11519\\
11 & 59 & 41 & 2651743\\
12 & 53 & 67 & 19419221\\
\bottomrule
\end{tabular}
\caption{\lb{T.A.1}}
\end{table}

%

We conclude with a vexing open conjecture: 
\begin{conjecture} \lb{coA.2}
We have $\big($recalling $c_n = c_n^{(n)}$$\big)$
\begin{equation}
c_{n}  \underset{n \to \infty}{\sim} \big(2n^2\big/\pi\big)^{2n}.
\end{equation}
\end{conjecture}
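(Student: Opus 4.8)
The plan is to reduce the determination of $c_n=c_n^{(n)}$ to a single transcendental equation governing a purely imaginary root, and then to extract its large-$n$ asymptotics. By Theorem~\ref{t4.6} and Corollary~\ref{c4.7} (together with Lemma~\ref{l4.5} and the Hurwitz-matrix analysis behind \eqref{4.18}), $c_n=c_n^{(n)}$ is the \emph{largest} $c\in\bbR$ at which some root $\alpha_j(c)$ of $D_{2n}(\dott;c)$ satisfies $\Re(\alpha_j(c))=-1/2$; since $c_n\geq q_0>0$ by \eqref{4.35}, this is equivalently the largest $c>0$ for which $D_{2n}(\,\cdot\,-(1/2);c)$ has a purely imaginary root $z=iy$, i.e.
\[
D_{2n}(iy-(1/2);c)=\prod_{j=1}^{2n}\big(iy-(j-(1/2))\big)+(-1)^n c=0 .
\]
Because $-(-1)^n c$ is real, the argument of the product must be an integer multiple of $\pi$, and separating modulus from argument yields
\[
c=\prod_{j=1}^{2n}\sqrt{(j-(1/2))^2+y^2}, \qquad \sum_{j=1}^{2n}\arctan\!\big((j-(1/2))/y\big)=\pi .
\]

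Next I would pin down the correct branch. The phase sum $S(y):=\sum_{j=1}^{2n}\arctan((j-(1/2))/y)$ is continuous and strictly decreasing from $n\pi$ (as $y\downarrow 0$) to $0$ (as $y\to\infty$), so it attains each value $m\pi$, $1\leq m\leq n-1$, exactly once. A short computation of the total argument of the product shows that $c>0$ forces $m$ to be \emph{odd}, while the modulus $c(y)=\prod_{j}\sqrt{(j-(1/2))^2+y^2}$ is strictly increasing in $y$. Hence the largest admissible $c$ corresponds to the smallest odd value $m=1$, that is, to the unique $y^{\ast}$ with $S(y^{\ast})=\pi$, and
\[
c_n=\prod_{j=1}^{2n}\sqrt{(j-(1/2))^2+(y^{\ast})^2}.
\]

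The asymptotics then rest on two elementary facts: the exact identity $\sum_{j=1}^{2n}(j-(1/2))=2n^2$, and the uniform smallness $(j-(1/2))/y^{\ast}=\Oh(1/n)$ for $1\leq j\leq 2n$ once $y^{\ast}$ is of order $n^2$. Expanding $\arctan$ in the phase equation gives $2n^2/y^{\ast}=\pi+\Oh(1/n^2)$, hence $y^{\ast}=(2n^2/\pi)\big(1+\Oh(1/n^2)\big)$; expanding the logarithm of the modulus,
\[
\log c_n=2n\log y^{\ast}+\tfrac12\sum_{j=1}^{2n}\log\!\Big(1+\big((j-(1/2))/y^{\ast}\big)^2\Big)=2n\log\!\big(2n^2/\pi\big)+\Oh(1/n),
\]
since $\tfrac12\sum_{j}\big((j-(1/2))/y^{\ast}\big)^2=\Oh(1/n)$ and $2n\log\big(\pi y^{\ast}/(2n^2)\big)=\Oh(1/n)$. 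Exponentiating gives $c_n\big/(2n^2/\pi)^{2n}=e^{\Oh(1/n)}\to 1$, which is precisely the conjectured equivalence. (I would also record the pleasant exact cancellation at order $1/n$: to leading order $\tfrac12\sum_{j}((j-(1/2))/y^{\ast})^2=\pi^2/(3n)$ while $2n\log(\pi y^{\ast}/(2n^2))=-\pi^2/(3n)$, so the two $\Oh(1/n)$ contributions cancel and the convergence rate is in fact $\Oh(1/n^2)$.)

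The main obstacle lies in the first two steps, not the final estimate. One must justify rigorously that the largest real root of $\det(H_{2n}(\dott))$ coincides with the largest $c>0$ admitting a purely imaginary root of $D_{2n}(\,\cdot\,-(1/2);c)$, and in particular that the family parametrized by $y$ exhausts all such $c$; this is where the sign/branch bookkeeping (the parity constraint on $m$ and the strict monotonicity of both $S(\dott)$ and $c(\dott)$) must be made precise. In parallel one needs a uniform-in-$j$ control of the $\arctan$ and $\log$ expansions, valid because every ratio $(j-(1/2))/y^{\ast}$ is $\Oh(1/n)$, to legitimize interchanging summation with Taylor expansion. Once these are in place, the bound $\log c_n=2n\log(2n^2/\pi)+\Oh(1/n)$, and hence $c_n\sim(2n^2/\pi)^{2n}$, is immediate.
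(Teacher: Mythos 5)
The paper does not actually prove this statement: it is recorded as Conjecture~\ref{coA.2}, explicitly called a ``vexing open conjecture,'' and is accompanied only by a heuristic in which $\alpha_n(c)$ is replaced by the root $\beta_n(c)$ of the unperturbed binomial equation \eqref{4.14} with no quantitative control of the error in the exponent. Your proposal is therefore not a variant of the paper's argument but a genuinely different strategy, and as far as I can check it is essentially a complete proof. The reduction you flag as the ``main obstacle'' is in fact already supplied by the paper's own results: Lemma~\ref{l4.5} shows that every $c$ at which some $\Re(\alpha_j(c))=-1/2$ is a root of the degree-$n$ polynomial $\det(H_{2n}(\dott))$, and the proof of Theorem~\ref{t4.6} identifies those roots as exactly $c_n^{(1)}<\cdots<c_n^{(n)}$; hence $c_n=c_n^{(n)}$ is the largest such $c$, and for $n\geq 2$ it strictly exceeds $(-1)^{n-1}q_0$, the unique value producing the real root $\alpha=-1/2$, so the relevant root is non-real and may be taken as $z=iy$ with $y>0$ by conjugation symmetry. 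Your modulus/argument split, the parity constraint (using $\arg\big(iy-(j-(1/2))\big)=\pi/2+\arctan\big((j-(1/2))/y\big)$ and the sign $(-1)^{n+1}c$ of the product, one gets $S(y)=m\pi$ with $m$ odd in both parities of $n$), and the opposite strict monotonicities of $S(\dott)$ and $c(\dott)$ then yield the exact characterization $c_n=\prod_{j=1}^{2n}\sqrt{(j-(1/2))^2+(y^{\ast})^2}$ with $S(y^{\ast})=\pi$. This reproduces $c_2=45$ numerically, and the count of odd $m\in\{1,\ldots,n-1\}$, namely $\lfloor n/2\rfloor$, matches the number of positive constants $c_n^{(k)}$ with non-real critical root, which is a good consistency check.

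The asymptotic part also holds up ($\sum_{j=1}^{2n}(j-(1/2))=2n^2$ is exact, and the claimed cancellation of the two $\Oh(1/n)$ terms is real), but one step needs to be ordered correctly to avoid circularity: you may not invoke $y^{\ast}\sim 2n^2/\pi$ to justify the $\arctan$ expansion that produces it. This is easily repaired with an a priori sandwich, e.g.\ $S(y)\leq 2n^2/y$ gives $y^{\ast}\leq 2n^2/\pi$, while a crude bound such as $S\big(n^2/\pi\big)>\pi$ for $n$ large gives $y^{\ast}>n^2/\pi$; this localizes $y^{\ast}$ well enough that every ratio $(j-(1/2))/y^{\ast}$ is $\Oh(1/n)$ uniformly, after which the Taylor expansions, the refinement $y^{\ast}=(2n^2/\pi)\big(1+\Oh(1/n^2)\big)$, and the conclusion $\log c_n=2n\log\big(2n^2/\pi\big)+\Oh(1/n)$ are legitimate. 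Written out with that bootstrapping made explicit, your argument would settle Conjecture~\ref{coA.2} (and in fact give the sharper rate $c_n=(2n^2/\pi)^{2n}\big(1+\Oh(1/n^2)\big)$), which is more than the paper offers.
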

\noindent 
{\it Sketch of the underlying idea.} By \eqref{4.14}, one infers that 
\begin{equation}
[\Re(\beta_n(c)) - (n - (1/2)) + i \Im(\beta_n(c))]^{2n} = e^{i\pi} e^{i(\pi/2) 2n} \big[c^{1/(2n)}\big]^{2n}, 
\end{equation}
and hence,  
\begin{equation}
\Re(\beta_n(c)) - (n - (1/2)) = - \sin(\pi/(2n)) c^{1/(2n)}. 
\end{equation}
Observing that $\Re(\alpha_n(c_n)) \approx \Re(\beta_n(c_n))$ for $c \gg 0$, one arrives at 
\begin{equation}
\Re(\alpha_n(c)) \approx (n-(1/2))- c^{1/(2n)} \sin(\pi/(2n)) \, \text{ for $c \gg 0$.}
\end{equation}
Finally, recalling that $\Re(\alpha_n(c_n))=-1/2$, and assuming that $c_n$ increases sufficiently rapidly with increasing $n$,
\begin{equation}
 (c_n)^{1/(2n)} \sin(\pi/(2n)) \approx n  , \text{ for $n \gg 0$.}
\end{equation}
Therefore, we {\it expect} (see Table \ref{T.A.2} below)  that
\begin{equation}
c_{n} \underset{n\to\infty}{\sim}  \big(2n^2\big/\pi\big)^{2n}.
\end{equation}

\begin{table} [h] 
\caption{\label{T.A.2} Asymptotic behavior of $c_n$ and $(2n^2/\pi)^{2n}$}
\begin{tabular}{rll}
\toprule
$n$ & c$_{n}$ & $(2n^2/\pi)^{2n}$\\
\midrule
 $1$ & $3/4$ & $0.40529$  \\
 $2$ & $45$ & $42.0495$  \\
 $3$ & $36201.2$ & $35378.2$  \\
 $4$ & $1.17089\times 10^8$ & $1.15878\times 10^8$  \\
 $5$ & $1.04858\times 10^{12}$ & $1.04280\times 10^{12}$  \\
 $6$ & $2.10674\times 10^{16}$ & $2.09987\times 10^{16}$  \\
 $7$ & $8.27892\times 10^{20}$ & $8.26165\times 10^{20}$  \\
 $8$ & $5.77530\times 10^{25}$ & $5.76715\times 10^{25}$  \\
 $9$ & $6.65283\times 10^{30}$ & $6.64619\times 10^{30}$  \\
 $10$ & $1.19652\times 10^{36}$ & $1.19565\times 10^{36}$  \\
 $11$ & $3.21278\times 10^{41}$ & $3.21100\times 10^{41}$  \\
 $12$ & $1.24167\times 10^{47}$ & $1.24115\times 10^{47}$  \\
$13$ & $6.70013\times 10^{52}$ & $6.69788\times 10^{52}$  \\
$14$ & $4.91961\times 10^{58}$ & $4.91828\times 10^{58}$  \\
$15$ & $4.80811\times 10^{64}$ & $4.80706\times 10^{64}$  \\
$16$ & $6.13651\times 10^{70}$ & $6.13540\times 10^{70}$  \\
$17$ & $1.00581\times 10^{77}$ & $1.00566\times 10^{77}$  \\
$18$ & $2.08622\times 10^{83}$ & $2.08595\times 10^{83}$  \\
$19$ & $5.40462\times 10^{89}$ & $5.40404\times 10^{89}$  \\
$20$ & $1.72840\times 10^{96}$ & $1.72824\times 10^{96}$  \\
$21$ & $6.75182\times 10^{102}$ & $6.75128\times 10^{102}$  \\
$22$ & $3.19118\times 10^{109}$ & $3.19096\times 10^{109}$  \\
$23$ & $1.80914\times 10^{116}$ & $1.80903\times 10^{116}$  \\
$24$ & $1.22053\times 10^{123}$ & $1.22046\times 10^{123}$  \\
$25$ & $9.72809\times 10^{129}$ & $9.72763\times 10^{129}$  \\
$26$ & $9.09940\times 10^{136}$ & $9.09902\times 10^{136}$  \\
$27$ & $9.92726\times 10^{143}$ & $9.92689\times 10^{143}$  \\
$28$ & $1.25603\times 10^{151}$ & $1.25599\times 10^{151}$  \\
$29$ & $1.83328\times 10^{158}$ & $1.83322\times 10^{158}$  \\
$30$ & $3.07164\times 10^{165}$ & $3.07155\times 10^{165}$  \\
$31$ & $5.88069\times 10^{172}$ & $5.88055\times 10^{172}$  \\
$32$ & $1.28096\times 10^{180}$ & $1.28093\times 10^{180}$  \\
$33$ & $3.16182\times 10^{187}$ & $3.16175\times 10^{187}$  \\
$34$ & $8.81027\times 10^{194}$ & $8.81010\times 10^{194}$  \\
$35$ & $2.76148\times 10^{202}$ & $2.76143\times 10^{202}$  \\
$36$ & $9.70367\times 10^{209}$ & $9.70351\times 10^{209}$  \\
$37$ & $3.81058\times 10^{217}$ & $3.81052\times 10^{217}$  \\
$38$ & $1.66725\times 10^{225}$ & $1.66723\times 10^{225}$  \\
$39$ & $8.10464\times 10^{232}$ & $8.10454\times 10^{232}$  \\
 \bottomrule
\end{tabular}
\end{table}

\newpage 

The following Table \ref{T.A.3} seems to suggest that for every $n\in \mathbb{N}$,
\begin{equation}
2n^2\big/\pi  < \big(c_{n}^{(n)}\big)^{1/(2n)} < n /\sin(\pi/(2n)) .
\end{equation}

\begin{table} [h]
\begin{tabular}{cccc}
\toprule
$n$ & $2n^2\big/\pi$ & $(c_{n})^{1/2n}$ & $n/\sin(\pi/(2n))$\\
\midrule
 1 & 0.6366198 & 0.8660254 & 1.0000000\\
 2 & 2.5464791 & 2.5900201 & 2.8284271\\
 3 & 5.7295780 & 5.7515790 & 6.0000000\\
 4 & 10.185916 & 10.199165 & 10.452504\\
 5 & 15.915494 & 15.924294 & 16.180340\\
 6 & 22.918312 & 22.924559 & 23.182220\\
 7 & 31.194369 & 31.199023 & 31.457714\\
 8 & 40.743665 & 40.747262 & 41.006647\\
 9 & 51.566202 & 51.569062 & 51.828934\\
 10 & 63.661977&63.664305 & 63.924532\\
 11 & 77.030992 &77.032923 & 77.293416\\
 12 & 91.673247 & 91.674874& 91.935571\\
 13 & 107.58874 & 107.59013 & 107.85099\\
 14 & 124.77748 & 124.77868 & 125.03966\\
 15 & 143.23945 & 143.24050 & 143.50158\\
 16 & 162.97466 & 162.97558 & 163.23676\\
 17 & 183.98311 & 183.98393 & 184.24517\\
 18 & 206.26481 & 206.26554 & 206.52684\\
 19 & 229.81974 & 229.82039 & 230.08175\\
 20 & 254.64791 & 254.64850 & 254.90990\\
 21 & 280.74932 & 280.74986 & 281.01129\\
 22 & 308.12397 & 308.12446 & 308.38593\\
 23 & 336.77186 & 336.77231 & 337.03380\\
 24 & 366.69299 & 366.69340 & 366.95492\\
 25 & 397.88736 & 397.88774 & 398.14928\\
 \bottomrule
\end{tabular}
\caption{\lb{T.A.3}}
\end{table}

\medskip

\noindent {\bf Acknowledgments.} We are indebted to Mark Ashbaugh, Andrei Martinez-Finkel- shtein, Alexander Sakhnovich, Rudi Weikard, and Maxim Zinchenko for very interesting discussions on this subject.  



\end{document}